\providecommand{\R}{\mathbb{R}}
\providecommand{\C}{\mathbb{C}}
\providecommand{\N}{\mathbb{N}}
\providecommand{\eps}{\varepsilon}
\providecommand{\om}{\omega}
\def\longrightharpoonup{\relbar\joinrel\rightharpoonup} 
\def\cv{\stackrel{w}{\longrightharpoonup}}
\def\cvwstar{\stackrel{w*}{\longrightharpoonup}}
\renewcommand{\leq}{\leqslant}
\renewcommand{\geq}{\geqslant}
\renewcommand{\Re}{\mbox{Re}}
\renewcommand{\Im}{\mbox{Im}}
\renewcommand{\div}{\operatorname{div}}
\newcommand{\curl}{\operatorname{curl}}
\newcommand{\Id}{\operatorname{Id}}
\newtheorem{Theorem}{Theorem}
\newtheorem{Proposition}{Proposition}
\newtheorem{Lemma}{Lemma}
\newtheorem{Remark}{Remark}
\begin{document}

\author{Olivier Glass\footnote{Ceremade,
Universit\'e Paris-Dauphine, 
Place du Mar\'echal de Lattre de Tassigny,
75775 Paris Cedex 16, France
}, 
Christophe Lacave\footnote{Institut Math\'ematiques de Jussieu, 
Universit\'e Paris-Diderot - Paris 7,
175, rue du Chevaleret,
75013 Paris,
France
},
Franck Sueur\footnote{Laboratoire Jacques-Louis Lions,
Universit\'e Pierre et Marie Curie - Paris 6,
4 place Jussieu,
75005 Paris, 
France}
}
\date{\today}
\title{On the motion of a small body  immersed in a two dimensional incompressible perfect fluid.  }
\maketitle

\begin{abstract}
In this paper we prove that the motion of a solid body in a two dimensional  incompressible perfect fluid converges, when the body shrinks to a point with  fixed mass and  circulation, 
to a variant of the vortex-wave system where the vortex, placed in the point occupied by the shrunk body, is accelerated by a lift force similar to the Kutta-Joukowski force of the irrotational theory.
\end{abstract}

\section{Introduction}
\label{Intro}
In this paper we consider the motion of a small solid body in a planar ideal fluid, and the limit behaviour of the system as the solid body is reduced to a point. \par
Let us first describe the equations when the solid has a fixed size.
Let $\mathcal{S}_0$ be a closed, bounded, connected and simply connected subset of the plane with smooth boundary.
We assume that the body initially occupies the domain $\mathcal{S}_0$ and rigidly moves so that at  time $t$  it occupies an isometric  domain denoted by $\mathcal{S}(t)$.
We denote $\mathcal{F} (t) := \R^2  \setminus \mathcal{S}(t) $ the domain occupied by the fluid  at  time $t$ starting from the initial domain $\mathcal{F}_{0}  := \R^2 \setminus {\mathcal{S}}_{0} $. \par
The equations modelling the dynamics of the system then read
\begin{eqnarray}
\displaystyle \frac{\partial u }{\partial t}+(u  \cdot\nabla)u   + \nabla p =0 && \text{for} \ x\in \mathcal{F} (t), \label{Euler1}\\
\div u   = 0 && \text{for} \ x\in \mathcal{F}(t) , \label{Euler2} \\
u  \cdot n =   u_\mathcal{S} \cdot n && \text{for}  \  x\in \partial \mathcal{S}  (t),   \label{Euler3} \\
\lim_{|x|\to \infty} |u| =0,& & \\
m h'' (t) &=&  \int_{\partial  \mathcal{S} (t)} p n \, ds ,  \label{Solide1} \\ 
\mathcal{J} r' (t) &= &   \int_{\partial  \mathcal{S} (t)} (x-  h (t) )^\perp \cdot pn   \, ds , \label{Solide2} \\
u |_{t= 0} = u_0 & &  \text{for}  \  x\in  \mathcal{F}_0 ,  \label{Eulerci2} \\
h (0)= h_0 , \ h' (0)=  \ell_0 , & &   r  (0)=  r _0.  \label{Solideci}
\end{eqnarray}
Here $u=(u_1,u_2)$ and $p$ denote the velocity and pressure fields,
$m$ and $ \mathcal{J}$ denote respectively the mass and the moment of inertia of the body  while the fluid  is supposed to be  homogeneous of density $1$, to simplify the notations.
When $x=(x_1,x_2)$ the notation $x^\perp $ stands for $x^\perp =( -x_2 , x_1 )$, 
$n$ denotes  the unit normal vector pointing outside the fluid,  $h'(t)$
is the velocity of the center of mass  $h (t)$ of the body and $r(t)$ denotes the angular velocity of the rigid body. Finally we denote by $u_{{\mathcal S}}$ the velocity of the body:
\begin{equation} \label{VeloBody}
u_\mathcal{S} (t,x) =   h' (t)+ r (t) (x-  h (t))^\perp .
\end{equation}
\ \par
Since  $\mathcal{S} (t)$ is the position occupied by a rigid body there exists a rotation matrix 
\begin{eqnarray} \label{RotationMatrix}
Q (t):= 
\begin{bmatrix}
\cos  \theta (t) & - \sin \theta (t)
\\  \sin  \theta (t) & \cos  \theta (t)
\end{bmatrix},
\end{eqnarray}
such that  the position $\eta (t,x) \in \mathcal{S} (t)$  at 
the time $t$ of the point fixed to the body with an initial position $x$ is 
\begin{eqnarray} \label{FlotSolide}
\eta (t,x) := h (t) + Q (t)(x- h_0) .
\end{eqnarray}
The angle $\theta$ satisfies
\begin{equation*}
\theta'(t) = r (t),
\end{equation*}
and we choose $\theta (t)$ such that $\theta (0) =  0$.
\par
\ \par
The equations \eqref{Euler1} and \eqref{Euler2} are the incompressible Euler equations, the condition \eqref{Euler3} means that the boundary is impermeable and the equations \eqref{Solide1} and \eqref{Solide2} are the Newton's balance law for linear and angular momenta. \par 
\ \par
For the study of ideal flow, an important quantity is the vorticity $w:= \curl u=\partial_1 u_2- \partial_2 u_1$, satisfying the transport equation: 
\begin{equation} \label{Vort1}
\displaystyle \frac{\partial w }{\partial t}+(u  \cdot\nabla)w   =0 \  \text{for} \ x\in \mathcal{F} (t). 
\end{equation}
%
One has the following result concerning the Cauchy problem for the above system, the initial position of the solid being given. This result describes weak solutions, extending results concerning the fluid alone. It considers the case when vorticity belongs in $L^{p}$ as in DiPerna and Majda \cite{DiPernaMajda} and includes weak solutions with bounded vorticity as in Yudovich \cite{Yudovich}.
\begin{Theorem} \label{ThmYudo}
Let $p \in (2,+\infty]$.
For any $u_0 \in C^{0}(\overline{\mathcal{F}_0};\R^{2})$, $(\ell_0,r_0) \in \R^2 \times \R$, such that:
\begin{equation} \label{CondCompatibilite}
\div u_0 =0 \text{ in } {\mathcal F}_0 \ \text{ and } \  u_0   \cdot  n = (\ell_0 + r_0 (x-h_{0})^{\perp})   \cdot  n \text{ on } \partial \mathcal{S}_0,
\end{equation}
\begin{equation} \label{TourbillonYudo}
w_0 := \curl u_0  \in L_c^{p}(\overline{\mathcal F}_0),
\end{equation}
\begin{equation*}
\lim_{|x| \rightarrow +\infty} u_{0}(x) =0,
\end{equation*}
there exists a solution $(h',r,u)$ of \eqref{Euler1}--\eqref{Solideci} in $C^1 (\R^+; \R^2 \times \R) \times C^{0}(\R^+,  W^{1,p}({\mathcal F}(t)))$ with $\partial_{t} u, \nabla p \in L_{loc}^{\infty}(\R^{+},L^{q}({\mathcal F}(t)))$ for any $q \in (1,p ]$ when $p<\infty$ and in $C^1 (\R^+; \R^2 \times \R) \times L^{\infty}_{loc}(\R^+, \mathcal{LL}({\mathcal F}(t)))$ with $\partial_{t} u, \nabla p \in L^{\infty}_{loc}(\R^{+},L^{q}({\mathcal F}(t)))$ for any $q \in (1,+\infty)$ when $p=\infty$. \par
Moreover such a solution satisfies that  for all $t>0$, $w(t):=\curl(u(t)) \in L^p_c(\overline{{\mathcal F}(t)})$, it is energy-conserving in the sense of Proposition \ref{PropKirchoffSueur} and $\| w(t,\cdot)\|_{L^{q}({\mathcal F}(t))}$ (for any $q \in [1,p]$), $\int_{{\mathcal F}(t)} w(t,x) \, dx$ and $\int_{\partial {\mathcal S}(t)} u \cdot \tau \, ds$ are preserved over time. \par
Finally when $p= \infty$, the solution is unique.
\end{Theorem}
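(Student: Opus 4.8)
The plan is to establish existence by a fixed-point/approximation scheme, then derive the conservation laws from the structure of the equations, and finally prove uniqueness in the bounded-vorticity case by a Yudovich-type estimate adapted to the moving domain.
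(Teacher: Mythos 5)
Your outline does coincide with the paper's strategy at the coarsest level (a fixed-point argument for existence, conservation laws, and a Yudovich-type estimate for uniqueness when $p=\infty$), but as written it contains none of the steps that carry the actual difficulty, and each of the three clauses hides a genuine gap. First, the fixed point cannot even be formulated until the pressure is eliminated from Newton's laws: the paper does this by splitting $\nabla q$ with the Leray projector of Lemma \ref{LemProjLeray} and the Kirchhoff potentials $\Phi_i$, which produces the added-mass matrix ${\mathcal M}$ and the closed ODE \eqref{CalcPression2} for $(\ell,r)$. Without this, the map $(\omega,\ell,r)\mapsto(\tilde\omega,\tilde\ell,\tilde r)$ has no well-defined solid component; one must also verify the decay $v={\mathcal O}(1/|x|)$, $\nabla v={\mathcal O}(1/|x|^{2})$ at infinity to know that $(v-\ell-rx^\perp)\cdot\nabla v+rv^\perp$ lies in $L^{p}$ so that the projector applies in the exterior domain. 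Second, for $2<p<\infty$ the conservation of $\|w(t)\|_{L^{q}}$ does not follow merely ``from the structure of the equations'': the transport field is only $W^{1,p}$, there is no classical flow, and one needs DiPerna--Lions/Bouchut renormalization (together with a cut-off argument near $\partial{\mathcal S}_0$ exploiting the tangency of $v-\ell-rx^\perp$) to justify $\partial_t\beta(\omega)+\div\bigl(\beta(\omega)(v-\ell-rx^\perp)\bigr)=0$; moreover existence for $p<\infty$ is not obtained directly by a fixed point but by approximating $w_0$ with bounded vorticities and passing to the limit, which in turn requires the uniform a priori bounds of Section \ref{Sec:APE}, themselves resting on the energy identity of Proposition \ref{PropKirchoffSueur}.

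Third, the uniqueness step is not the standard Yudovich estimate: $v_1$ and $v_2$ are not individually square-integrable (only their difference is, because the two solutions share the same circulation and total vorticity), and the boundary term $\int_{\partial{\mathcal F}_0}\breve q\,\breve v\cdot n$ produced by the integration by parts must be converted, via the solid's equations, into $m\breve\ell\cdot\breve\ell'+{\mathcal J}\breve r\,\breve r'+m\breve r\,\breve\ell\cdot\ell_1^\perp$ so that it can be absorbed into the Gronwall quantity $\|\breve v\|_{L^{2}}^{2}+|\breve\ell|^{2}+|\breve r|^{2}$; the closing of the Gronwall loop also uses the quantitative blow-up $\|\nabla v\|_{L^{p}}\lesssim p$ of the Calder\'on--Zygmund constant. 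Until these points are supplied, the proposal is a plausible table of contents rather than a proof.
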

This result is proven in \cite{GS}. For the sake of self-containedness, we give a short proof of it in appendix. The notation $\mathcal{LL}(\Omega)$ refers to the space of log-Lipschitz functions on $\Omega$, that is the set of functions $f \in L^{\infty}(\Omega)$ such that
\begin{equation} \label{DefLL}
\| f \|_{\mathcal{LL}(\Omega)} := \| f\|_{L^{\infty}(\Omega)} + \sup_{x\not = y} \frac{|f(x)-f(y)|}{|(x-y)(1+ \ln^{-}|x-y|)|} < +\infty.
\end{equation}
Above, we used the abuse of notation $L^{\infty}(\R^{+};X({\mathcal F}(t)))$ (resp. $C^{0}(\R^{+};X({\mathcal F}(t)))$) where $X$ is a functional space; by this we refer to functions defined for almost each $t$ as a function in the space $X({\mathcal F}(t))$, and which can be extended as a function in $L^{\infty}(\R^{+};X(\R^{2}))$ (resp. $C^{0}(\R^{+};X(\R^{2}))$). \par
Let us also mention that the existence and uniqueness of finite energy classical solutions to the problem \eqref{Euler1}--\eqref{Solideci} has been tackled by Ortega, Rosier and Takahashi in \cite{ort1}. \par
\ \par
Let us now discuss the main problem considered in this paper, that is the behaviour of this system for a small body. Accordingly, we consider ${\mathcal S}_{0}$ a fixed domain as above, and define for $\varepsilon>0$ the resized domain $\mathcal{S}^{\eps}_0$ given by:
\begin{equation*}
\mathcal{S}^{\eps}_0 - h_{0} = \eps (\mathcal{S}_0 -h_{0}).
\end{equation*}
Therefore $\mathcal{S}^{\eps}_{0}$ denotes the domain initially occupied by the solid and 
\begin{equation*}
\mathcal{F}^{\eps}_{0}  := \R^2 \setminus {\mathcal{S}}^{\eps}_{0},
\end{equation*}
the one occupied by the fluid.
Let $w_0  \in L_c^{p}(\R^{2})$. We fix $\gamma,\ r_0\in \R$, and $h_0,\ \ell_0\in \R^2$ independently of $\eps$. Therefore, as we will see in Proposition \ref{PourFairePlaisirAFranckie}, there exists an unique vector field $u_0^\eps \in C^{0}(\overline{{\mathcal F}_{0}^{\varepsilon}};\R^{2})$ such that
\begin{equation} \label{UDI}
\left\{ \begin{array}{l}
\div u^\eps_0=0,\ \curl u^\eps_0=w_0^{\varepsilon} \text{ in } {\mathcal F}_{0}^{\varepsilon}, \\
u^\eps_0\cdot n=(\ell_0+r_0(x-h_0)^\perp)\cdot n  \text{ on } \partial {\mathcal S}_{0}^{\varepsilon},  \\
\lim_{|x|\to \infty} |u^\eps_0(x)|=0, \ \int_{\partial \mathcal{S}_0^\eps} u^\eps_0\cdot \tau\, ds =\gamma,
\end{array} \right.
\end{equation}
where
\begin{equation} \label{Defw0eps}
w_{0}^{\varepsilon} := w_{0|{\mathcal F}_{0}^{\varepsilon}}.
\end{equation}
We will be interested in the limit of the system as $\varepsilon \rightarrow 0^{+}$ in the following particular regime:
\begin{equation*}
m_{\varepsilon}=m \text{ and } {\mathcal J}_{\varepsilon}=\varepsilon^{2} {\mathcal J}_{0},
\end{equation*}
where $m$ and ${\mathcal J}_{0}$ are fixed constant. This is obtained for instance for a homogeneous solid, with a constant mass as $\varepsilon \rightarrow 0^{+}$. \par
\ \par

The main goal of this paper is to prove the following theorem.
\begin{Theorem}[]
\label{MR}
Assume that $p \in (2,+\infty]$.
Let  be given $h_{0} \in \R^{2}$, $\gamma \in \R$, $(\ell_0 ,r_0 ) \in \R^3$,  $ w_0 $ in $L^p_c (\R^2 )$. Consider $T>0$. For any $\eps \in (0,1]$, we associate $u^\eps_0$ by \eqref{UDI}-\eqref{Defw0eps} and consider $(h^{\varepsilon},r^{\varepsilon},u^\eps)$ a solution of the system \eqref{Euler1}--\eqref{Solideci} given by Theorem  \ref{ThmYudo}. \par
Then, up to a subsequence, one has the following:
\begin{itemize}
\item  $h^\eps $ converges to $h$ weakly-$*$ in $W^{2,\infty} (0,T;\R^{2})$, 
\item $\varepsilon \theta^{\varepsilon}$ converges to $0$ weakly-$*$ in $W^{2,\infty} (0,T;\R)$,
\item $w^{\varepsilon}$ converges to $w$ in $C^{0} ([0,T]; L^{p}(\R^{2})-w)$ (resp. in $C^{0} ([0,T]; L^{\infty}(\R^{2})-w*)$ if $p=+\infty$),
\item $u^{\varepsilon}$ converges to $\displaystyle \tilde{u} + \frac{\gamma}{2\pi} \frac{(x-h(t))^{\perp}}{|x-h(t)|^{2}}$ in $C^{0} ([0,T]; L^{q}_{loc} (\R^{2} ))$ for $q<2$,
\item  one has
\begin{gather}
\label{EulerPoint} 
\frac{\partial w }{\partial t}+ \div
\bigg( \left[ \tilde{u}+ \frac{\gamma}{2\pi} \frac{(x-h(t))^{\perp}}{|x-h(t)|^{2}} \right] w \bigg) = 0
\ \text{ in } \ [0,T] \times \R^{2}, \\
\label{PointEuler}
m h''(t) = \gamma \Big(h'(t) - \tilde{u}(t,h(t))\Big)^\perp, \\
\label{EP0}
w |_{t= 0}=  w_0 ,\ h(0) = h_0, \ h' (0) = \ell_0, \\
\label{EPU}
\tilde{u}(t,x) =  \frac{1}{2 \pi} \int_{ \R^{2}} \frac{(x-y)^{\perp}}{|x-y|^{2}} w (t,y) \, dy.
\end{gather}
\end{itemize}
\end{Theorem}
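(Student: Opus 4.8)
The strategy is the classical one for singular limits of fluid--solid systems: establish $\varepsilon$-uniform a priori bounds, extract convergent subsequences, and pass to the limit separately in the vorticity transport equation and in Newton's law, the essential output being the emergence of the Kutta--Joukowski force in \eqref{PointEuler}. The starting point is the list of conserved quantities in Theorem \ref{ThmYudo}: for each $\varepsilon$, the norms $\|w^\varepsilon(t)\|_{L^q}$ ($q\in[1,p]$), the total vorticity and the circulation $\gamma$ are constant in time and, since $w_0^\varepsilon=w_{0|{\mathcal F}_0^\varepsilon}$ is dominated by $w_0$, uniformly bounded in $\varepsilon$. The subtle point is energy: as $\gamma\neq 0$ the velocity decays only like $|x|^{-1}$ at infinity and is singular like $|x-h^\varepsilon|^{-1}$ at the body, so the kinetic energy is infinite and must be renormalized. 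I would subtract the explicit circulation field $\frac{\gamma}{2\pi}\frac{(x-h^\varepsilon)^\perp}{|x-h^\varepsilon|^2}$ (cut away from the body and from infinity) and run a modulated-energy estimate on the finite-energy remainder, extracting a uniform bound on $(h^\varepsilon)'$ and, using the scaling ${\mathcal J}_\varepsilon=\varepsilon^2{\mathcal J}_0$, on $\varepsilon r^\varepsilon$. Together with the Biot--Savart law in ${\mathcal F}^\varepsilon(t)$ this bounds the transport velocity away from the body, hence gives a uniform-in-$(\varepsilon,t)$ bound on $\operatorname{supp} w^\varepsilon$.

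Compactness is then standard. The $W^{2,\infty}(0,T)$ bound on $h^\varepsilon$ yields, along a subsequence, $h^\varepsilon\to h$ weakly-$*$ and strongly in $C^1$; the bound on $\varepsilon r^\varepsilon$ forces $\varepsilon\theta^\varepsilon\to 0$. The uniform $L^\infty(0,T;L^p)$ bound on $w^\varepsilon$ together with the transport equation \eqref{Vort1} (which controls $\partial_t w^\varepsilon$ in a negative norm) gives equicontinuity in time, whence $w^\varepsilon\to w$ in $C^0([0,T];L^p(\R^2)-w)$ by an Aubin--Lions / Arzel\`a--Ascoli argument. For the velocity I would separate the singular circulation part from the rest: since $h^\varepsilon\to h$ uniformly, $\frac{\gamma}{2\pi}\frac{(x-h^\varepsilon)^\perp}{|x-h^\varepsilon|^2}\to\frac{\gamma}{2\pi}\frac{(x-h)^\perp}{|x-h|^2}$ strongly in $C^0([0,T];L^q_{loc})$ for $q<2$, the solid-motion part vanishes in the same topology as the body shrinks, and the vorticity part converges to the full-plane Biot--Savart field $\tilde u$ of \eqref{EPU} (using that the exterior Biot--Savart operator converges to the full-plane one as the obstacle collapses, together with $w^\varepsilon\to w$). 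This identifies the limit of $u^\varepsilon$ and the fourth item of the statement.

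Passing to the limit in \eqref{Vort1} to obtain \eqref{EulerPoint} reduces to the convergence of the product $u^\varepsilon w^\varepsilon$. The only difficulty is the singular factor near $x=h$, but the point-vortex field belongs to $L^q_{loc}$ for every $q<2$, in particular for some $q\ge p'$; paired with the weak-$L^p$ convergence of $w^\varepsilon$ (with $p>2$) this suffices to pass to the limit in the product in the sense of distributions. The initial data in \eqref{EP0} pass directly to the limit.

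The core of the proof, and the main obstacle, is the limit of Newton's law $m(h^\varepsilon)''=\int_{\partial{\mathcal S}^\varepsilon}p\,n\,ds$. Rather than estimating the pressure integral directly, I would recast the solid equations in Kirchhoff form, writing $u^\varepsilon$ through the stream function of $w^\varepsilon$ and the elementary harmonic potentials carrying $\gamma$, $(h^\varepsilon)'$ and $r^\varepsilon$, all expressed via the conformal map onto the exterior of ${\mathcal S}_0^\varepsilon$. Newton's law then becomes an ODE whose coefficients are the added mass and the self-circulation and vorticity couplings; the task is their asymptotic analysis as $\varepsilon\to 0$. The added-mass matrix is $O(\varepsilon^2)$ and drops out, the genuinely geometric and purely rotational terms vanish or cancel, and the one surviving contribution is the coupling of $\gamma$ with the relative velocity $(h^\varepsilon)'-\tilde u^\varepsilon(h^\varepsilon)$, converging to the gyroscopic lift $\gamma\big(h'(t)-\tilde u(t,h(t))\big)^\perp$. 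The demanding part is to control all error terms uniformly --- in particular to show that the divergent self-energy of the circulation does not pollute the force and that the regular field satisfies $\tilde u^\varepsilon(h^\varepsilon)\to\tilde u(h)$ despite being evaluated at the moving collapse point --- which rests on the estimates of the first step. Combining the weak-$*$ limit of $(h^\varepsilon)''$ with the continuous limit of the right-hand side yields \eqref{PointEuler} and closes the argument.
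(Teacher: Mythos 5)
Your overall route is the one the paper actually follows: a renormalized energy giving uniform bounds on $(h^\eps)'$, $\eps r^\eps$ and the vorticity support, weak compactness, a weak--strong passage in the transport equation, and a Kirchhoff-form recasting of Newton's law analysed through the conformal map (in the paper this is Blasius' lemma plus Cauchy's residue theorem applied to the Laurent expansion of the harmonic field, which is what isolates the single surviving contribution $\gamma\,(K_{\R^2}[\omega^\eps](t,0)-\ell^\eps)^\perp$). There is, however, one genuine gap: the treatment of the rotation. You assert that ``the bound on $\eps r^\eps$ forces $\eps\theta^\eps\to 0$'' and that the ``purely rotational terms vanish or cancel''. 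Neither follows from the bound alone. The torque on the body is of order $\eps$ while ${\mathcal J}_\eps=\eps^2{\mathcal J}_0$, so the angular equation \eqref{rprime} reads ${\mathcal J}_0(\eps r^\eps)'=\gamma\,\zeta\cdot(\ell^\eps-K_{\R^2}[\omega^\eps](t,0))+o(1)$, whose right-hand side is bounded but has no reason to be small. Hence $\eps r^\eps$ only converges weakly-$*$ in $W^{1,\infty}(0,T)$ to some $R$ with $R(0)=0$, and a priori $R\not\equiv 0$; correspondingly $\eps\theta^\eps\to\int_0^\cdot R$, which need not vanish, and the term $-\gamma(\eps r^\eps)Q^\eps(t)\xi$ in \eqref{lprime2} need not disappear from the limiting force balance.

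The missing idea is a nonstationary-phase argument (Lemmas \ref{LemCVSolide} and \ref{LemCVSolide2} of the paper): wherever $R\neq 0$ one has $r^\eps\sim R/\eps\to\infty$, so the body spins faster and faster and the factor $\exp(-i\theta^\eps)$ that appears when the torque is written in the original frame oscillates rapidly; an integration by parts in time against a test function then kills the right-hand side of the angular equation there, so $R'=0$ on $\{R\neq 0\}$, while $R'=0$ a.e.\ on the complement by Lipschitz regularity alone. Hence $R$ is constant, equal to $R(0)=0$, which yields the second bullet of the theorem and shows that the rotational term contributes nothing to \eqref{PointEuler}. Apart from this, your sketch matches the paper's proof: the paper renormalizes the conserved Hamiltonian by subtracting $\tfrac12\ln\eps\,(\alpha^\eps)^2+\ln\eps\,\gamma\alpha^\eps$ rather than running a modulated-energy estimate, and the evaluation of the regular field at the collapse point is handled by the boundary-layer approximation of Proposition \ref{Pr}, but these are the same ideas you describe.
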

\begin{Remark}
Above the convergence of $w^{\varepsilon}$ holds when $w^{\varepsilon}$ is extended by $0$ inside the solid. In the same way, the convergence of $u^{\varepsilon}$ holds when extending it for instance by $0$ (or by $\ell^{\varepsilon} + r^{\varepsilon} (x- h(t))^{\perp}$) inside the solid.
\end{Remark}
\begin{Remark}
Equation  \eqref{EulerPoint} and the $w$-part of the initial data given in \eqref{EP0} 
hold in the sense that  for any test function $\psi\in C^\infty_c([0,T)\times\R^2)$ we have 
\begin{equation} \label{EqSolFaibleIntro}
\int_0^\infty \int_{\R^2} \psi_t  w \, \, dx\, dt
+\int_0^\infty \int_{\R^2} \nabla_x \psi \cdot \Big( \tilde{u}+ \frac{\gamma}{2\pi} \frac{(x-h(t))^{\perp}}{|x-h(t)|^{2}} \Big) w  \, dx\, dt
+\int_{\R^2} \psi(0,x) w_0 (x) \, dx=0 .
\end{equation}
\end{Remark}
Equation (\ref{EulerPoint}) describes the evolution of the vorticity of the fluid: it is transported by a velocity obtained by the usual Biot-Savart law in the plane, but from a vorticity which is the sum of the fluid vorticity and of a point vortex placed at the (time-dependent) position $h(t)$ where the solid shrinks, with a strength equal to the circulation $\gamma$ around the body. \par
Equation (\ref{PointEuler}) means that the shrunk body is accelerated by a lift force similar to the Kutta-Joukowski lift of the irrotational theory: the shrunk body experiments a lift which is proportional to the circulation $\gamma$ around the body and to the difference between the solid velocity and the virtual fluid velocity  obtained by the Biot-Savart law in the plane from the fluid vorticity, up to a rotation of a $\pi/2$ angle. See for instance the textbooks of Childress \cite{Childress} or Marchioro and Pulvirenti \cite{MP} for a discussion of the Kutta-Joukowski force. See also Grotta-Ragazzo, Koiller and Oliva \cite{ragazzo}, where they consider a similar system of a point mass embedded in an irrotational fluid and driven by Kutta-Joukowski force. \par
Let us mention that the problem of the limit of the Euler system around a fixed shrinking obstacle, which is tightly connected to ours, has been studied by Iftimie, Lopes-Filho and Nussenzveig-Lopes in \cite{ift_lop_euler}. Another result connected to our study is given in Dashti and Robinson \cite{DashtiRobinson}, where the authors consider the limit of a shrinking ball of fixed density and without rotation in a viscous fluid (modelled by the Navier-Stokes equations). \par

\begin{Remark}
A challenging open problem is to extend the previous analysis to the case where the density of the body  is fixed as $\eps$ goes to zero so that the mass of the body  is vanishing as the body  is shrinking to a point.  Formally the equations (\ref{EulerPoint})--(\ref{EPU})  would reduce to the vortex-wave system (for which we refer to \cite{MP}), but such a limit is quite singular as the equation \eqref{PointEuler} degenerates into a first order equation as $m$ goes to $0$.
\end{Remark}
\begin{Remark}
Since we start with a conservative and reversible system it is expected that the equations \eqref{EulerPoint}--\eqref{EPU} should be also 
conservative and reversible. This is actually the case and we  will even see in Section \ref{Sec:Hamilton} that Equations \eqref{EulerPoint}--\eqref{EPU} can be seen as an Hamiltonian system with respect to following renormalized energy
\begin{equation} \label{HamiltonienLimite}
2 \mathcal{H} =  m |h'(t)|^2  -  \int_{\R^2 \times \R^2 }  G (x-y) w (t,x) w (t,y) \, dx \, dy  -  2  \gamma \int_{\R^2  } G (x-h(t))  w (t,x) \, dx ,
\end{equation}
where 
\begin{equation} \label{LeG}
G (x) :=  \frac{1}{2 \pi} \ln |x|.
\end{equation}
\end{Remark} 
\begin{Remark}
Following the lines of \cite[Section 5.3]{ift_lop_euler}, one can see that in the above limit equation \eqref{EulerPoint} and \eqref{EPU} can be rewritten in the following velocity form:
\begin{gather*}
\partial_t  \tilde{u} +  (\tilde{u} \cdot \nabla)  \tilde{u} + \gamma \div \big(  \tilde{u} \otimes H(x-h(t)) + H(x-h(t)) \otimes \tilde{u} \big) - \gamma \tilde{u}(t,h(t))^\perp \delta_{h(t)} = - \nabla p , \\
\div \tilde{u}=0 \ \text{ and } \ \tilde{u}_{|t=0} =  \frac{1}{2 \pi} \int_{ \R^{2}} \frac{(x-y)^{\perp}}{|x-y|^{2}} w_{0} (y) \, dy,
\end{gather*}
with 
\begin{equation*}
H(x):=\frac{1}{2\pi} \frac{x^{\perp}}{|x|^{2}}.
\end{equation*}
\end{Remark}

\ \par
The structure of the paper is as follows. In Section \ref{Sec:Velocity}, we give a representation of a velocity field satisfying \eqref{UDI}. In Section \ref{Sec:EBF}, we discuss a change of variables allowing to rephrase the system in a fixed domain. In Section \ref{Sec:APE}, we give a priori estimates on the system. Section \ref{PF} is the central part where we study precisely the effect of pressure on the body as $\varepsilon$ tends to $0^{+}$. In Section \ref{Passage} we prove Theorem \ref{MR} by establishing compactness and obtaining the limit equation. Section \ref{TR} is devoted to the proof of several technical lemmas. In Section \ref{PreuveYudo} we briefly prove Theorem \ref{ThmYudo}. Finally in Section \ref{Sec:Hamilton}, we prove that the limit system obtained in Theorem \ref{MR} has a Hamiltonian structure. \par
\section{Representation of the velocity in the body frame}
\label{Sec:Velocity}
Without loss of generality and for the rest of the paper, we assume from now on that 
\begin{equation*}
h_{0}= 0,
\end{equation*}
which means that the body  is centered at the origin at the initial time $t=0$. \par
\ \par
In this section, we study the elliptic $\div$/$\curl$ system which allows to pass from the vorticity to the velocity field, in the body frame. In the whole paper and in this section in particular, we will need some arguments of elementary complex analysis: for the rest of the paper, we identify $\C$ and $\R^{2}$ through 
\begin{equation*}
(x_{1},x_{2})= x_{1} + i x_{2}.
\end{equation*}
\subsection{Green's function and  Biot-Savart operator}
\label{Sec:GreensFunction}
We denote by $G^{\eps} (x,y)$ the Green's function of $\mathcal{F}^{\eps}_{0}$ with Dirichlet boundary conditions. 
We also introduce the function $K^{\eps} (x,y)=\nabla^\perp G^{\eps} (x,y)$ known as the kernel of the Biot-Savart operator  $K^{\eps} [\om]$ which therefore acts on $\om \in L^p_c (\overline{\mathcal{F}^{\eps}_{0}})$  through the formula 
\begin{equation*}
  K^{\eps}[\om](x)= \int_{\mathcal{F}^{\eps}_{0}}K^{\eps} (x,y) \om(y) \, dy.
\end{equation*}
The following is classical.
\begin{Proposition}\label{propdefK}
Let $p \in (2,+\infty)$ (resp. $p=+\infty$).
Let  $\om\in L^p_c (\overline{\mathcal{F}^{\eps}_{0}})$. Then $K^{\eps}[\om]$ is in the H\"older space $C^{1-2/p}(\mathcal{F}^{\eps}_{0})$ of bounded H\"older  continuous functions of order $1-2/p$ (resp. in $\mathcal{LL}(\mathcal{F}^{\eps}_{0})$), divergence-free, tangent to the boundary and such that $\curl K^{\eps}[\om]=\om$. 
Moreover, it satisfies
\begin{equation} \nonumber
K^{\eps}[\om](x) = {\mathcal O}\left( \frac{1}{|x|^{2}}\right) \ \text{ as } x \rightarrow \infty,
\end{equation}
and is consequently square-integrable, and its circulation around $\partial \mathcal{S}^{\eps}_0$
is given by
\begin{equation}\label{circK}
 \int_{\partial \mathcal{S}^{\eps}_0  }   K^{\eps} [ \omega] \cdot {\tau} \, ds = -  \int_{\mathcal{F}^{\eps}_{0}  }  \omega \, dx ,
\end{equation}
where $\tau$ is the tangent unit vector field on $\partial {\mathcal S}^{\varepsilon}_{0}$.
\end{Proposition}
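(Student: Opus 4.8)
The plan is to read off the six properties from the defining features of the Dirichlet Green's function $G^{\eps}$ of $\mathcal{F}^{\eps}_{0}$: for each $y$, $\Delta_{x} G^{\eps}(\cdot,y)=\delta_{y}$ in $\mathcal{F}^{\eps}_{0}$, $G^{\eps}(\cdot,y)=0$ on $\partial\mathcal{S}^{\eps}_{0}$, and $G^{\eps}(\cdot,y)$ admits a finite limit at infinity (the normalization selecting the Green's function in the exterior domain). Since $K^{\eps}=\nabla_{x}^{\perp}G^{\eps}$, differentiating under the integral and using $\div\nabla^{\perp}=0$ and $\curl\nabla^{\perp}=\Delta$ gives immediately $\div K^{\eps}[\om]=0$ and $\curl K^{\eps}[\om]=\int\Delta_{x} G^{\eps}(x,y)\,\om(y)\,dy=\om$. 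Tangency follows from the Dirichlet condition: as $x\mapsto G^{\eps}(x,y)$ vanishes on $\partial\mathcal{S}^{\eps}_{0}$ its tangential derivative vanishes there, so $\nabla_{x} G^{\eps}$ is normal and $K^{\eps}$ is tangent to $\partial\mathcal{S}^{\eps}_{0}$; integrating against $\om$ keeps $K^{\eps}[\om]\cdot n=0$.

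For the regularity I would split $G^{\eps}(x,y)=\frac{1}{2\pi}\ln|x-y|+h^{\eps}(x,y)$, where for fixed $y$ the regular part $h^{\eps}(\cdot,y)$ is harmonic in $\mathcal{F}^{\eps}_{0}$ with boundary value $-\frac{1}{2\pi}\ln|\cdot-y|$ on $\partial\mathcal{S}^{\eps}_{0}$, so that $K^{\eps}[\om]=\frac{1}{2\pi}\int\frac{(x-y)^{\perp}}{|x-y|^{2}}\om(y)\,dy+\int\nabla_{x}^{\perp}h^{\eps}(x,y)\,\om(y)\,dy$. The first integral is the full-plane Biot--Savart field, whose membership in $C^{1-2/p}$ for $\om\in L^{p}_{c}$, $p\in(2,\infty)$, and in $\mathcal{LL}$ for $p=\infty$, are the classical potential estimates. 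For the second, elliptic regularity on the smooth domain $\mathcal{F}^{\eps}_{0}$ makes $(x,y)\mapsto\nabla_{x}^{\perp}h^{\eps}(x,y)$ smooth up to $\partial\mathcal{S}^{\eps}_{0}$ and bounded, uniformly for $y$ in the (compact) support of $\om$, hence it contributes a smooth bounded term. This gives the stated regularity up to the boundary.

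For the behaviour at infinity I would pull back to the exterior of the unit disk through a conformal map $T:\mathcal{F}^{\eps}_{0}\to\{|z|>1\}$ with $T(\infty)=\infty$, using the conformal invariance $G^{\eps}(x,y)=G_{\Delta}(T(x),T(y))$, where $G_{\Delta}(z,\zeta)=\frac{1}{2\pi}\big(\ln|z-\zeta|-\ln|z-1/\bar\zeta|-\ln|\zeta|\big)$ is the explicit image Green's function. The key point is that the source at $\zeta$ and its image at $1/\bar\zeta$ carry opposite charges, so $G_{\Delta}(\cdot,\zeta)$ tends to the finite value $-\frac{1}{2\pi}\ln|\zeta|$ and $\nabla_{z} G_{\Delta}(z,\zeta)=\mathcal{O}(|z|^{-2})$ uniformly for $\zeta$ in compact subsets of $\{|z|>1\}$. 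Since $T(x)=\beta x+\mathcal{O}(1)$ at infinity, the chain rule yields $K^{\eps}(x,y)=\nabla_{x}^{\perp}G^{\eps}(x,y)=\mathcal{O}(|x|^{-2})$ uniformly for $y$ in the support of $\om$, whence $K^{\eps}[\om](x)=\mathcal{O}(|x|^{-2})$ and in particular $K^{\eps}[\om]\in L^{2}(\mathcal{F}^{\eps}_{0})$.

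The circulation identity \eqref{circK} then follows by Green's theorem: applying it on the annulus bounded by $\partial\mathcal{S}^{\eps}_{0}$ and a large circle $C_{R}$ enclosing the support of $\om$, the flux of $\curl K^{\eps}[\om]=\om$ equals the difference of the circulations of $K^{\eps}[\om]$ on $C_{R}$ and on $\partial\mathcal{S}^{\eps}_{0}$; the $\mathcal{O}(|x|^{-2})$ decay makes $\oint_{C_{R}}K^{\eps}[\om]\cdot\tau\,ds=\mathcal{O}(R^{-1})\to0$, leaving $\int_{\partial\mathcal{S}^{\eps}_{0}}K^{\eps}[\om]\cdot\tau\,ds=-\int_{\mathcal{F}^{\eps}_{0}}\om\,dx$. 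The main obstacle is exactly this decay: what has to be pinned down is the cancellation of the leading $|x|^{-1}$ (monopole) term — equivalently, that the exterior Dirichlet normalization forces zero net circulation at infinity — and the cleanest way I see to establish it, with the uniformity in $y$ needed for the integral, is through the explicit image Green's function pulled back by $T$; the remaining points are then routine.
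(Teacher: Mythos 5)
The paper gives no proof of this proposition (it is stated as classical, the reference point being the explicit formulas \eqref{Gepsilon} and the ensuing expression for $K^{\eps}[\om]$), so your argument stands on its own. Your treatment of $\div$, $\curl$ and tangency from the defining properties of $G^{\eps}$, the decay at infinity via the pulled-back image Green's function (which is exactly \eqref{Gepsilon}), and the circulation identity \eqref{circK} by Stokes' theorem on a large annulus are all correct and follow the standard route.

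There is, however, one genuine gap, in the regularity step. You split $G^{\eps}(x,y)=\frac{1}{2\pi}\ln|x-y|+h^{\eps}(x,y)$ and claim that $\nabla_x^{\perp}h^{\eps}(x,y)$ is smooth and bounded uniformly for $y$ in the support of $\om$. This is true only if that support is compactly contained in the \emph{open} fluid domain; but the hypothesis is $\om\in L^p_c(\overline{\mathcal{F}^{\eps}_{0}})$, so the support may touch $\partial\mathcal{S}^{\eps}_{0}$, and this is precisely the case the paper needs. From \eqref{Gepsilon}, $\nabla_x h^{\eps}(x,y)$ contains the image term $\frac{{\mathcal T}_\eps(x)-{\mathcal T}_\eps(y)^*}{|{\mathcal T}_\eps(x)-{\mathcal T}_\eps(y)^*|^2}$ (composed with $D{\mathcal T}_\eps^T$), and as $y$ tends to a boundary point, ${\mathcal T}_\eps(y)^*$ tends to ${\mathcal T}_\eps(y)$ on the unit circle: the image singularity collides with the boundary, so $\nabla_x h^{\eps}$ is \emph{not} uniformly bounded for such $y$. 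The repair is standard but is genuine work of the same nature as the estimate for the leading kernel: one uses the inequality $|z-\zeta^*|\geq |z-\zeta|/|\zeta|$ for $|z|,|\zeta|\geq 1$ (equivalently $|\bar\zeta z-1|^2-|z-\zeta|^2=(|z|^2-1)(|\zeta|^2-1)\geq 0$) to dominate the image kernel pointwise by the original one, and then applies the same $L^{p}$--$L^{p'}$ potential estimates to that term as to the full-plane part; this is how the paper's own reference \cite{ift_lop_euler} proceeds (compare Lemma \ref{ift_lop}). With that replacement of ``smooth bounded correction'' by an honest estimate of the image kernel, your proof is complete.
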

\ \par
As we work in the exterior of a single solid, we can have an explicit formula for $K^{\varepsilon}$ in terms of a biholomorphism ${\mathcal F}_{0} \rightarrow \C \setminus \overline{B}(0,1)$. To that purpose, let us select the unique such biholomorphism ${\mathcal T}:{\mathcal F}_{0} \rightarrow \C \setminus \overline{B}(0,1)$ such that the following development holds for some $(\beta,\tilde{\beta}) \in \R_{*}^{+} \times \C$:
\begin{equation} \label{Eq:DevtT}
{\mathcal T}(z) = \beta z + \tilde{\beta} + \mathcal{O}\left(\frac{1}{z}\right) \ \text{ as } \ z \rightarrow + \infty.
\end{equation}
This is possible since $\mathcal{S}_0$ is a bounded, closed, connected and simply connected domain of the plane (using Riemann's mapping theorem and a conjugation by $z \mapsto 1/z$). Now, as $\mathcal{S}_0^\eps=\eps \mathcal{S}_0$, we can introduce ${\mathcal T}_\eps$ as the biholomorphism from $\mathcal{F}^\eps_0$ to the exterior of the unit ball given by: 
\begin{equation}\label{Teps}
{\mathcal T}_\eps(z)={\mathcal T}(z/\eps).
\end{equation}
In particular ${\mathcal T}={\mathcal T}_{1}$. \par
With these notations we have (see e.g. \cite{ift_lop_euler}):
\begin{equation} \label{Gepsilon}
G^{\varepsilon}(x,y)= \frac{1}{2\pi} 
\ln \frac{|{\mathcal T}_{\varepsilon}(x) - {\mathcal T}_{\varepsilon}(y) |}{|{\mathcal T}_{\varepsilon}(x) - {\mathcal T}_{\varepsilon}(y)^{*}| |{\mathcal T}_{\varepsilon}(y)|} ,
\end{equation}
and
\begin{equation*}
K^\eps[\omega](x)=\frac{1}{2\pi} D{\mathcal T}_\eps^T(x)\int_{\mathcal{F}^\eps_0}
\Bigl( \frac{{\mathcal T}_\eps(x)-{\mathcal T}_\eps(y)}{|{\mathcal T}_\eps(x)-{\mathcal T}_\eps(y)|^2}- \frac{{\mathcal T}_\eps(x)-{\mathcal T}_\eps(y)^*}{|{\mathcal T}_\eps(x)-{\mathcal T}_\eps(y)^*|^2}\Bigl)^\perp
\omega(y)\, dy,
\end{equation*}
with the notation 
\begin{equation*}
y^* =\frac{y}{|y|^2}.
\end{equation*}
These explicit formulas will help us to find estimates for the velocity in terms of vorticity estimates. \par
\ \par
Let us also introduce the Biot-Savart operator associated to the full plane, that is the operator, denoted $K_{\R^{2}}$ which maps a vorticity $\omega$ to the velocity
\begin{equation}
\label{BSR2}
K_{\R^{2}} \lbrack \omega \rbrack (x) :=  \int_{ \R^{2}} H (x-y)  \omega (y) \,dy , 
\end{equation}
where $H$ is defined as
\begin{equation} \label{defH}
H(x):= \frac{x^{\perp}}{2 \pi |x|^{2}}.
\end{equation}
Proposition \ref{propdefK} is valid on the whole plane (see e.g. \cite{CheminSMF}). Precisely we have
\begin{Proposition}\label{propdefKwhole}
Let $p \in (2,+\infty)$ (resp. $p=+\infty$).
There exists a constant $C>0$ such that the following holds true.
Let  $\om\in L^p_{c} (\R^2)$. Then $K_{\R^{2}}  [\om]$ is bounded, continuous, divergence-free and such that $\curl K_{\R^{2}} [\om]=\om$. Moreover, it satisfies
\begin{equation} \nonumber
\| K_{\R^{2}} [\om]  \|_{W^{1,p} (\R^2)}  + \| K_{\R^{2}} [\om]  \|_{C^{1-2/p} (\R^2)}  \leq  C (\| \om  \|_{L^p (\R^2)} +  \| \om  \|_{L^1 (\R^2)} ),
\end{equation}
\begin{equation*}
\text{(resp. }\| K_{\R^{2}} [\om]  \|_{\mathcal{LL} (\R^2)} \leq  C (\| \om  \|_{L^\infty (\R^2)} +  \| \om  \|_{L^1 (\R^2)})\text{)}, 
\end{equation*}
and
\begin{equation} \nonumber
K_{\R^{2}} [\om](x) = {\mathcal O}\left( \frac{1}{|x|}\right) \ \text{ as } x \rightarrow \infty.
\end{equation}
\end{Proposition}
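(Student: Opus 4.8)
The plan is to recognise $K_{\R^{2}}[\om]$ as $\nabla^{\perp}\psi$, where $\psi := G * \om$ is the stream function built from the Green function $G(x) = \frac{1}{2\pi}\ln|x|$ of the whole plane. Since $H = \nabla^{\perp}G$ (with the convention $\nabla^{\perp} = (-\partial_{2},\partial_{1})$), the definition \eqref{BSR2}--\eqref{defH} gives exactly $K_{\R^{2}}[\om] = \nabla^{\perp}\psi$. As $G$ is the fundamental solution of the Laplacian, $\Delta\psi = (\Delta G)*\om = \om$, from which the two algebraic identities $\div K_{\R^{2}}[\om] = \div\nabla^{\perp}\psi = 0$ and $\curl K_{\R^{2}}[\om] = \curl\nabla^{\perp}\psi = \Delta\psi = \om$ follow at once. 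This reduces the whole statement to estimating the convolution $H*\om$ and its first derivatives.

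For the $L^{\infty}$ bound and the decay, I would split the kernel according to whether $|x-y|$ is smaller or larger than $1$. On the near region, $|H(x-y)| \le \frac{1}{2\pi|x-y|}$ is locally in $L^{p'}$ (here $p'<2$ because $p>2$), so Hölder's inequality controls $\int_{|x-y|<1}|H(x-y)||\om(y)|\,dy$ by $C\|\om\|_{L^{p}}$; on the far region $|H(x-y)| \le \frac{1}{2\pi}$, so that contribution is bounded by $\frac{1}{2\pi}\|\om\|_{L^{1}}$. This yields $\|K_{\R^{2}}[\om]\|_{L^{\infty}} \le C(\|\om\|_{L^{p}} + \|\om\|_{L^{1}})$. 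The compact support of $\om$, say in $B(0,R)$, then gives the decay: for $|x| \ge 2R$ one has $|x-y| \ge |x|/2$ on the support, whence $|K_{\R^{2}}[\om](x)| \le \frac{\|\om\|_{L^{1}}}{\pi|x|} = \mathcal{O}(1/|x|)$. Since $p>2$, this decay is $L^{p}$-integrable at infinity, and combined with the $L^{\infty}$ bound on $B(0,2R)$ it shows $K_{\R^{2}}[\om] \in L^{p}(\R^{2})$ with the claimed bound.

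The gradient estimate is where genuine harmonic analysis enters: $\nabla K_{\R^{2}}[\om] = \nabla\nabla^{\perp}\psi$ consists of second derivatives of the Newtonian potential $G*\om$, which are given (a principal-value singular integral plus a pointwise multiple of $\om$ coming from the singularity) by Calderón--Zygmund operators applied to $\om$. Their boundedness on $L^{p}$ for $1<p<\infty$ gives $\|\nabla K_{\R^{2}}[\om]\|_{L^{p}} \le C\|\om\|_{L^{p}}$. Together with the previous step this establishes the full $W^{1,p}$ bound, and the Hölder estimate follows from Morrey's embedding $W^{1,p}(\R^{2}) \hookrightarrow C^{1-2/p}(\R^{2})$, valid precisely because $p>2$.

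The delicate case, and the one I expect to be the main obstacle, is $p = +\infty$, where one cannot differentiate: a merely bounded vorticity produces only a log-Lipschitz, not Lipschitz, velocity. Here I would estimate the modulus of continuity of $K_{\R^{2}}[\om]$ directly, comparing $K_{\R^{2}}[\om](x)$ and $K_{\R^{2}}[\om](x')$ by splitting the domain of integration into the ball of radius $2|x-x'|$ centred between the two points (where one uses the local $L^{1}$-integrability of $H$) and its complement (where one uses the mean value estimate $|H(x-y)-H(x'-y)| \lesssim |x-x'|/|x-y|^{2}$ together with the logarithmic divergence of $\int dr/r$). The outcome is a bound of the form $C|x-x'|(1+\ln^{-}|x-x'|)$, which is exactly the content of the $\mathcal{LL}$ estimate in \eqref{DefLL}. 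This is the classical argument of Yudovich; since the statement is asserted to be standard (see \cite{CheminSMF}), I would cite it rather than reproduce the full computation.
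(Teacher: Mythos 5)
The paper does not actually prove this proposition: it records it as classical and refers to \cite{CheminSMF}, so there is no internal argument to compare yours against. Your outline is the standard one and is essentially correct: the identification $K_{\R^{2}}[\om]=\nabla^{\perp}(G*\om)$, the near/far splitting of the kernel for the $L^{\infty}$ bound and the $\mathcal{O}(1/|x|)$ decay, Calder\'on--Zygmund theory for $\|\nabla K_{\R^{2}}[\om]\|_{L^{p}}\leq C\|\om\|_{L^{p}}$, Morrey's embedding for the H\"older norm, and the Yudovich modulus-of-continuity computation for $p=+\infty$ are all the right ingredients.

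One step, however, does not deliver the statement as written. The constant $C$ is quantified before $\om$, hence must be uniform over all compactly supported $\om$; but your bound on the $L^{p}(\R^{2})$ norm of $K_{\R^{2}}[\om]$ itself (as opposed to its gradient) is obtained by combining the $L^{\infty}$ bound on $B(0,2R)$ with the pointwise decay outside, and the contribution of $B(0,2R)$ is then only controlled by $|B(0,2R)|^{1/p}\,(\|\om\|_{L^{p}}+\|\om\|_{L^{1}})$, which is not uniform in the support radius $R$. The fix is immediate and does not use compact support at all: write $H=H\mathbf{1}_{B(0,1)}+H\mathbf{1}_{B(0,1)^{c}}$ and apply Young's inequality, pairing $H\mathbf{1}_{B(0,1)}\in L^{1}$ with $\|\om\|_{L^{p}}$ and $H\mathbf{1}_{B(0,1)^{c}}\in L^{p}$ (valid since $p>2$) with $\|\om\|_{L^{1}}$; alternatively invoke Hardy--Littlewood--Sobolev with $1/r=1/p+1/2\in(1/2,1)$ and interpolate $L^{r}$ between $L^{1}$ and $L^{p}$. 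This issue does not affect the $C^{1-2/p}$ bound (whose seminorm needs only $\nabla K_{\R^{2}}[\om]\in L^{p}$ together with the $L^{\infty}$ bound, both of which you obtain with universal constants), nor the way the proposition is used in the paper, where the supports of $\omega^{\eps}(t,\cdot)$ are uniformly bounded; but as stated the proposition requires the support-independent constant.
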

We will also use several times the fact that $K_{\R^{2}}$ commutes with translations and rotations in the plane. \par
\subsection{Harmonic field}
To take the velocity circulation around the body into account, the following vector field will be useful. There exists one and only one solution  $H^{\eps}$ vanishing at infinity of 
\begin{gather*}
\div H^{\eps} = 0 \quad   \text{for}  \ x\in  \mathcal{F}^{\eps}_{0}, \\
\curl H^{\eps} = 0 \quad   \text{for}  \ x\in  \mathcal{F}^{\eps}_{0}, \\
H^{\eps} \cdot n = 0 \quad   \text{for}  \ x\in   \partial \mathcal{S}^{\eps}_0, \\
\int_{\partial \mathcal{S}^{\eps}_0 } H^{\eps} \cdot \tau \, ds = 1 .
\end{gather*}
We refer for instance to \cite{Kikuchi83}, \cite{ift_lop_euler}. This solution is smooth.
The vector field $H^{\eps}$ admits a harmonic stream function $ \Psi_{H^{\varepsilon}} (x)$:
\begin{equation*}
H^{\varepsilon} = \nabla^{\perp} \Psi_{H^{\varepsilon}},
\end{equation*}
which vanishes on the boundary $ \partial \mathcal{S}^{\eps}_0$, and behaves like $\ln |x |$ as $x$ goes to infinity. In our case, we have
\begin{equation} \label{DefPsiHEpsilon}
\Psi_{H^{\varepsilon}}(x)=\frac{1}{2\pi} \ln |{\mathcal T}_\eps(x)| \text{ and }
H^\eps(x) =\frac{1}{2\pi} D{\mathcal T}_\eps^T(x) \frac{({\mathcal T}_\eps(x))^\perp}{|{\mathcal T}_\eps(x)|^2}.
\end{equation}
The scaling law for $H^{\varepsilon}$ is as follows:
\begin{equation} \label{ScalingH}
H^{\varepsilon}(x) = \frac{1}{\varepsilon} H^{1} \left( \frac{x}{\varepsilon}\right).
\end{equation}
We develop the function $H^{\varepsilon}_{1}-iH^{\varepsilon}_{2}$ in Laurent series. The fact that it is holomorphic (as a function of $z=x_{1}+ix_{2}$), comes from $\curl H =\div H =0$ which translates into the Cauchy-Riemann equations. One can see that $(H^{\varepsilon}_{1}-iH^{\varepsilon}_{2})(z) = a^{\varepsilon}_{1}/z+ {\mathcal O}(1/z^{2})$ as $z \rightarrow \infty$ from the behaviour of $H^{\varepsilon}$ at infinity. To identify $a^{\varepsilon}_{1}$, we use the fact that
\begin{equation*}
\int_{\partial {\mathcal S}^{\varepsilon}_{0}} H^{\varepsilon}\cdot n \, ds =0 \ \text{ and } \ 
\int_{\partial {\mathcal S}^{\varepsilon}_{0}} H^{\varepsilon} \cdot \tau \, ds =1.
\end{equation*}
Hence we deduce that
\begin{equation} \label{HSeriesLaurent}
(H^{\varepsilon}_{1}-iH^{\varepsilon}_{2}) (z) = \frac{1}{2i\pi z} + {\mathcal O}(1/z^{2}) \ \text{ as } z \rightarrow \infty.
\end{equation}
Going back to real variables we have
\begin{equation} \label{HalInfini}
H^1 = {\mathcal O}\left(\frac{1}{|x|}\right) \text{ and } \nabla H^1 = {\mathcal O}\left(\frac{1}{|x|^{2}}\right).
\end{equation}
Let us also observe other consequences of \eqref{HSeriesLaurent}:
\begin{equation} \label{HSeriesLaurent3}
x^{\perp} \cdot H^{1} = \frac{1}{2 \pi} + {\mathcal O}\left(\frac{1}{|x|}\right) ,
\end{equation}
and
\begin{equation} \label{HSeriesLaurent2}
(H^1)^{\perp} - x^\perp  \cdot\nabla H^{1} = {\mathcal O}\left(\frac{1}{|x|^2}\right) .
\end{equation}
\begin{Remark} \label{casdisk}
In the case of a disk, we have 
\begin{eqnarray*}
H^{\eps} (x) = \frac{1}{2 \pi} \frac{x^{\perp}}{| x |^{2}}=H^1(x)=H(x).
\end{eqnarray*}
\end{Remark}
\subsection{Kirchoff potentials}
\label{KirPo}
Now to lift harmonically the boundary conditions, we will make use of  the  Kirchoff potentials, which are the solutions $\Phi^{\eps}:=(\Phi^{\eps}_{i})_{i=1,2, 3}$ of the following problems: 
\begin{equation}
\label{t1.3sec}
-\Delta \Phi^{\eps}_i = 0 \quad   \text{for}  \ x\in \mathcal{F}^{\eps}_{0}   ,
\end{equation}
\begin{equation}
\label{t1.4sec}
\Phi^{\eps}_i \longrightarrow 0 \quad  \text{for}  \ x \rightarrow  \infty, 
\end{equation}
\begin{equation}
\label{t1.5sec}
\frac{\partial \Phi^{\eps}_i}{\partial n}=K_i 
\quad  \text{for}  \  x\in \partial \mathcal{F}^{\eps}_{0}   ,
\end{equation}
where
\begin{equation} \label{t1.6sec}
(K_{1},\, K_{2}, \, K_{3}) :=(n_1,\, n_2 ,\, x^\perp \cdot n).
\end{equation}
Note that $K_{1}$, $K_{2}$ and $K_{3}$ actually depend on $\varepsilon$. 
Changing variables $y=x/\eps$, we see that
\begin{gather} \label{phi-scaling}
\Phi_{i}^\eps(x)= \eps \Phi_{i}^1(x/\eps) \ \text{ for } i=1,2, \\
 \label{phi-scaling2}
\Phi_{3}^\eps(x)= \eps^{2} \Phi_{3}^1(x/\eps).
\end{gather}
The existence of $\Phi_{i}^{1}$ is classical; note in particular that for $i=1,2,3$ one has
\begin{equation*}
\int_{\partial {\mathcal S}_{0}} K_{i} \, ds =0.
\end{equation*}
Now, $\Phi^{1}_{i}$ is the real part of a holomorphic function admitting a development in Laurent series; hence $\Phi_{i}^{1}(x) = {\mathcal O}(1/|x|)$ at infinity. For what concerns $\nabla \Phi^{1}_{i}$, we see that $\partial_{1} \Phi^{1}_{i} - i\partial_{2} \Phi^{1}_{i}$ is holomorphic and admits a development in Laurent series, which is the derivative with respect to $z$ of the former. We deduce that 
\begin{equation} \label{ComportementPhii}
\Phi_{i}^1(x) = {\mathcal O}\left(\frac{1}{|x|}\right) \text{ and }
\nabla \Phi_{i}^1(x) = {\mathcal O}\left(\frac{1}{|x|^{2}}\right) \text{ as } |x| \rightarrow +\infty. 
\end{equation}
and consequently that $\nabla \Phi^{\eps}_i$, for $i=1,2, 3$, are in $L^2 (\mathcal{F}^{\eps}_{0})$. 
%
%
%
%
%
%
\begin{Remark}
\label{casdisk2}
In the case of a disk, we have 
\begin{eqnarray*}
(\Phi^1_{1} , \Phi^1_{2}) = 2\pi (H^{1})^{\perp} , \quad \Phi_{3}=0.
\end{eqnarray*}
\end{Remark}
\subsection{Velocity decomposition}
Using the functions defined above, we deduce the following proposition.
\begin{Proposition}\label{PourFairePlaisirAFranckie}
Let $p>2$. Let be given $\om$ in $L^p_c (\mathcal{F}^{\eps}_{0})$, $\ell$ in $\R^2$, $r$ and $\gamma $ in  $\R$.
Then there is a unique solution $v$ in $W^{1,p}({\mathcal F}_{0}^{\varepsilon})$ when $p<+\infty$
(resp. in $\mathcal{LL}({\mathcal F}_{0}^{\varepsilon})$ when $p=+\infty$) of
\begin{equation} \label{DivCurlSystem}
\left\{ \begin{array}{l} 
 \div v = 0,  \quad   \text{for}  \ x\in  \mathcal{F}^{\eps}_{0} , \\
 \curl v  = \omega  \quad  \text{for}  \ x\in  \mathcal{F}^{\eps}_{0}  , \\
 v \cdot n = \left(\ell+r x^\perp\right)\cdot n  \quad   \text{for}  \ x\in \partial \mathcal{S}^{\eps}_0  , \\
 v \longrightarrow 0 \quad  \text{as}  \ x \rightarrow  \infty, \\
 \int_{ \partial \mathcal{S}^{\eps}_0} v  \cdot  \tau \, ds=  \gamma .
\end{array} \right.
\end{equation}
Moreover $v$ is given by 
\begin{equation}
\label{vdecomp}
v = K^{\eps} [\omega ] + (\gamma + \alpha )  H^{\eps} + \ell_1 \nabla \Phi^{\eps}_1 + \ell_2 \nabla \Phi^{\eps}_2
+ r \nabla \Phi^{\eps}_3 ,
\end{equation}
with 
\begin{equation} \label{DefAlpha}
\alpha :=   \int_{\mathcal{F}^{\eps}_{0}  }  \omega \, dx .
\end{equation}
\end{Proposition}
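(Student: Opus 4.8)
The plan is to prove the result by explicit construction, taking the right-hand side of \eqref{vdecomp} as a candidate $v$ and verifying the five conditions of \eqref{DivCurlSystem} one by one from the properties of the building blocks recorded in Subsections \ref{Sec:GreensFunction}--\ref{KirPo}, and then establishing uniqueness by an energy argument. For the first four conditions: every term on the right of \eqref{vdecomp} is divergence-free (the Biot--Savart field by Proposition \ref{propdefK}, the field $H^\eps$ by construction, and each $\nabla\Phi_i^\eps$ as the gradient of a harmonic function), so $\div v=0$; only $K^\eps[\omega]$ has nonzero curl, equal to $\omega$, the remaining terms being curl-free, so $\curl v=\omega$; and since every term vanishes at infinity, so does $v$.

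Next I would check the two boundary conditions. On $\partial\mathcal{S}_0^\eps$ one has $K^\eps[\omega]\cdot n=0$ and $H^\eps\cdot n=0$, both fields being tangent to the boundary, while $\nabla\Phi_i^\eps\cdot n=\partial_n\Phi_i^\eps=K_i$ by \eqref{t1.5sec}; hence by \eqref{t1.6sec} one gets $v\cdot n=\ell_1 n_1+\ell_2 n_2+r\,(x^\perp\cdot n)=(\ell+r x^\perp)\cdot n$. For the circulation, the gradient terms contribute nothing, since the integral over the closed curve $\partial\mathcal{S}_0^\eps$ of the tangential derivative of a single-valued function vanishes; the term $H^\eps$ contributes $(\gamma+\alpha)\cdot 1$; and $K^\eps[\omega]$ contributes $-\alpha$ by \eqref{circK} and the definition \eqref{DefAlpha} of $\alpha$. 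The total is exactly $\gamma$. The claimed regularity then follows from that of $K^\eps[\omega]$ in Proposition \ref{propdefK}, from the smoothness of $H^\eps$ and of the $\Phi_i^\eps$, and from the decay estimates \eqref{HalInfini} and \eqref{ComportementPhii}, which place these fields in $L^p$ near infinity precisely because $p>2$.

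The remaining point, which I regard as the crux, is uniqueness. Given two solutions, their difference $w$ is divergence-free, curl-free, tangent to $\partial\mathcal{S}_0^\eps$, vanishing at infinity, and of zero circulation around the body. Since $\mathcal{F}_0^\eps$ has the topology of an annulus, a curl-free field there is a global gradient exactly when its circulation around the hole vanishes; thus $w=\nabla\phi$ for a single-valued harmonic $\phi$. The vanishing normal component yields $\partial_n\phi=0$ on the boundary and zero net flux, which forces the leading logarithmic part of $\phi$ at infinity to be absent, so that $w=\mathcal{O}(1/|x|^2)$ and $\phi$ tends to a constant. I would then integrate $|\nabla\phi|^2$ over $\mathcal{F}_0^\eps\cap B(0,R)$, integrate by parts, and let $R\to\infty$: the inner boundary term vanishes because $\partial_n\phi=0$, and the outer boundary term is $\mathcal{O}(1/R)$ by the decay just obtained, so $\nabla\phi\equiv 0$ and $w=0$.

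The genuine difficulty lies entirely in this last step, namely in the justification of the behaviour at infinity: one must rule out a logarithmic growth of the potential, equivalently a nonzero flux or circulation, before the energy identity can be closed. Everything else reduces to a direct verification using the properties of $K^\eps$, $H^\eps$ and the $\Phi_i^\eps$ established above.
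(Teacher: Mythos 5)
Your proof is correct and, for existence, follows exactly the route the paper intends: the paper's own proof consists of the two sentences ``The existence comes from the above paragraphs. The uniqueness can be easily deduced from [Kikuchi, Lemma 2.14],'' so your term-by-term verification of \eqref{DivCurlSystem} for the candidate \eqref{vdecomp} — divergence, curl, decay, normal trace via \eqref{t1.5sec}--\eqref{t1.6sec}, and the circulation budget $(\gamma+\alpha)\cdot 1 + (-\alpha) = \gamma$ from \eqref{circK} — is precisely the content the authors leave implicit. Where you genuinely diverge is uniqueness: the paper outsources it to Kikuchi's Lemma 2.14, whereas you give a self-contained argument (difference is curl-free with zero circulation, hence a single-valued harmonic potential; zero flux kills the logarithmic mode, giving ${\mathcal O}(1/|x|^{2})$ decay; then the energy identity over ${\mathcal F}_0^{\eps}\cap B(0,R)$ closes as $R\to\infty$). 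That argument is sound — and your identification of the delicate point, namely ruling out logarithmic growth of the potential before integrating by parts, is exactly the issue the cited lemma handles — so your version buys self-containedness at the cost of a few lines, while the paper's buys brevity at the cost of a reference. The only detail you gloss over, at the same level of informality as the paper, is that $v\in W^{1,p}$ requires the Calder\'on--Zygmund bound $\nabla K^{\eps}[\omega]\in L^{p}$, which Proposition \ref{propdefK} as stated does not explicitly record.
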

\begin{proof}[Proof of Proposition \ref{PourFairePlaisirAFranckie}]
The existence comes from the above paragraphs. The uniqueness can be easily deduced from \cite[Lemma 2.14]{Kikuchi83}.
\end{proof}
Let us also introduce the following variant of the Biot-Savart operator $K^\eps$, the so-called
hydrodynamic Biot-Savart operator $K^\eps_H $ which can here be deduced from $K^\eps$ by the formula 
\begin{equation*}
K^\eps_H = K^\eps +  \alpha H^{\varepsilon}.
\end{equation*}
As a consequence it satisfies 
\begin{eqnarray*}
\div K^\eps_H [\omega]= 0, \ \text{ for }  \ x\in  \mathcal{F}^{\eps}_{0}, \\
\curl K^\eps_H [\omega]  = \omega , \ \text{ for } \ \ x\in  \mathcal{F}^{\eps}_{0}, \\
K^\eps_H [\omega] \cdot n = 0, \ \text{ for } \ x\in   \partial \mathcal{S}^{\eps}_0, \\
\int_{  \partial \mathcal{S}^{\eps}_0} K^\eps_H [\omega]  \cdot  \tau\, ds  = 0 .
\end{eqnarray*}
Then $v$ can be decomposed as 
\begin{eqnarray}
\label{vitedechydro}
v =  K^{\eps}_H [\omega] + \gamma H^\eps + \ell_1 \nabla \Phi^\eps_1
+ \ell_2 \nabla \Phi^\eps_2 + r \nabla \Phi^{\varepsilon}_{3}  .
\end{eqnarray}
%
%
%
We also introduce the hydrodynamic Green function $G_{H}^{\varepsilon}$ as
\begin{eqnarray} \label{GepsHydro}
G^{\eps}_{H} (x,y) &:=& G^{\eps} (x,y) +  \Psi_{H^\eps} (x) +  \Psi_{H^\eps} (y) \\
\nonumber
&=& \frac{1}{2\pi} 
\ln \frac{|{\mathcal T}_{\varepsilon}(x) - {\mathcal T}_{\varepsilon}(y) | |{\mathcal T}_{\varepsilon}(x)|}{|{\mathcal T}_{\varepsilon}(x) - {\mathcal T}_{\varepsilon}(y)^{*}|} .
\end{eqnarray}
Consequently one has
\begin{equation*}
K_{H}^{\eps}[\om](x)= \int_{\mathcal{F}^{\eps}_{0}} \nabla^{\perp} G_{H}^{\eps} (x,y) \om(y) \, dy.
\end{equation*}
\section{Equations in the body frame}
\label{Sec:EBF}
\subsection{Velocity equation}
\label{Subsec:VEBF}
In order to transfer the equations in the body frame we apply the following isometric change of variable:
\begin{equation}  \label{chgtvar}
\left\{
\begin{array}{l}
 v^{\eps} (t,x)=Q^{\eps} (t)^T\  u^\eps(t,Q^{\eps}(t)x+h^{\eps}(t)), \\
 q^{\eps} (t,x)=p^\eps(t,Q^{\eps}(t)x+h^{\eps}(t)), \\
 {\ell}^{\eps} (t)=Q^{\eps} (t)^T \ (h^{\eps})' (t) .
\end{array}\right.
\end{equation}
so that the equations  (\ref{Euler1})-(\ref{Solideci})  become
\begin{eqnarray}
\label{Euler11}
\displaystyle \frac{\partial v^{\eps}}{\partial t}
+ \left[(v^{\eps}-\ell^{\eps}-r^{\eps} x^\perp)\cdot\nabla\right]v^{\eps} 
+ r^{\eps} (v^{\eps})^\perp+\nabla q^{\eps} =0 && x\in \mathcal{F}^{\eps}_{0} ,\\
\label{Euler12}
\div v^{\eps} = 0 && x\in \mathcal{F}^{\eps}_{0} , \\
\label{Euler13}
v^{\eps}\cdot n = \left(\ell^{\eps} +r^{\eps} x^\perp\right)\cdot n && x\in \partial \mathcal{S}^{\eps}_0, \\
\label{Solide11}
m (\ell^{\eps})'(t)=\int_{\partial \mathcal{S}_0^{\eps}} q^{\eps} n \ ds-mr^{\eps} (\ell^{\eps})^\perp & & \\
\label{Solide12}
\mathcal{J}_{\varepsilon} (r^{\eps})'(t)=\int_{\partial \mathcal{S}^{\eps}_0} x^\perp \cdot q^{\eps} n \ ds & &  \\
\label{Euler1ci}
v^{\eps}(0,x)= v^{\eps}_0 (x) && x\in \mathcal{F}^{\eps}_{0} ,\\
\label{Solide1ci}
\ell^{\eps}(0)= \ell_0,\ r^{\eps} (0)= r_0 . 
\end{eqnarray}
\subsection{Vorticity equation}
We define
\begin{eqnarray} \label{chgtvar2}
\omega^{\varepsilon}(t,x) &:=& w^{\varepsilon}(t, Q^{\eps}(t)x+h^{\eps}(t) ) \\
\nonumber
&=& \curl v^{\varepsilon}(t,x).
\end{eqnarray}
Taking the curl of the equation \eqref{Euler11} we get
\begin{equation} \label{vorty1}
\partial_t  \omega^{\eps} + \left[(v^{\eps}-\ell^{\eps}-r^{\eps} x^\perp)\cdot\nabla\right]  \omega^{\eps} =0 \text{ for }
x \in \mathcal{F}^{\eps}_{0} .
\end{equation}
Due to the conservations mentioned in Theorem \ref{ThmYudo}, we have
\begin{gather}
\nonumber
\gamma =  \int_{  \partial \mathcal{S}^{\eps}_0} v^{\eps}  \cdot  \tau \, ds = \int_{  \partial \mathcal{S}^{\eps}(t)} u^{\eps}  \cdot  \tau \, ds
= \int_{  \partial \mathcal{S}^{\eps}_{0}} u^{\eps}_{0}  \cdot  \tau \, ds, \\
\label{DefAlpha2}
\alpha^{\varepsilon} = \int_{ \mathcal{F}^{\eps}_0} \omega^{\eps}(t,x) \, dx = \int_{\mathcal{F}^{\eps}(t)} w^{\eps}(t,x) \, dx = \int_{\mathcal{F}^{\eps}_{0}} w^{\varepsilon}_{0}(x) \, dx.
\end{gather}

%
Now using Section \ref{Sec:Velocity}, we can recover the velocity of the fluid from the vorticity, the velocity of the rigid body and the circulation of the flow around the solid through the following formula:
\begin{equation}
\label{vitedec2}
v^{\eps} =  K_{H}^{\eps}[\omega^\eps]+ \gamma H^{\eps} + \ell^{\eps}_1 \nabla \Phi^\eps_1
+ \ell^{\eps}_2 \nabla \Phi^\eps_2 + r^{\varepsilon} \nabla \Phi_{3}^{\varepsilon}  .
\end{equation}
Also, introducing $\tilde{v}^{\varepsilon}$ by
\begin{equation}
\label{allo}
\tilde{v}^{\eps}: = v^{\eps}- \gamma H^{\eps},
\end{equation}
we have
\begin{equation}
\label{allo2}
\tilde{v}^{\eps} =  K_{H}^{\eps}[\omega^\eps] + \ell^{\eps}_1 \nabla \Phi^\eps_1
+ \ell^{\eps}_2 \nabla \Phi^\eps_2 + r^{\varepsilon} \nabla \Phi_{3}^{\varepsilon}.
\end{equation}
%
%

%
%
%
%
%
%
%
%
%
%
\section{A priori estimates}
\label{Sec:APE}
The goal of this section is to derive a priori bounds on the solutions given by Theorem \ref{ThmYudo}, independently of $\varepsilon$ (see Proposition \ref{Prop:APE} below). In particular, $p$ is fixed in $(2,+\infty]$. We also give a result on an approximation of the velocity as $\varepsilon \rightarrow 0^{+}$ (Proposition \ref{Pr}), which will be useful in the sequel. \par
\subsection{Vorticity}
Due to Theorem \ref{ThmYudo}, the generalized enstrophies are conserved when time proceeds, in particular, we have for any $t>0$,
\begin{eqnarray} \label{ConsOmega}
\| \omega^{\eps} (t, \cdot) \|_{L^{p}(\mathcal{F}^{\eps}_{0})} 
= \| w^{\varepsilon}_{0} \|_{ L^{p}( \mathcal{F}^{\eps}_{0} )}
\leq \| w_{0} \|_{ L^{p}( \R^{2})},
\quad \| \omega^{\eps}(t,\cdot) \|_{ L^{1}(\mathcal{F}^{\eps}_{0})} 
= \| w^{\varepsilon}_{0} \|_{ L^{1}( \mathcal{F}^{\eps}_{0} )}
\leq \| w_{0} \|_{ L^{1}(\R^{2})}    .
\end{eqnarray}
\subsection{Energy}
\label{NRJ}
Let us introduce the matrix 
\begin{equation} \label{InertieMatrix}
\mathcal{M}^{\eps} :=\mathcal{M}^{\varepsilon}_1+\mathcal{M}^{\eps}_2,
\end{equation}
where
\begin{equation} \label{EvoMatrice2sec}
\mathcal{M}^{\varepsilon}_1 := \begin{bmatrix} m \Id_2 & 0 \\ 0 & \mathcal{J}_{\varepsilon} \end{bmatrix}, \ 
\text{ and }
\mathcal{M}^{\eps}_2
:= \begin{bmatrix} \int_{\mathcal{F}^{\eps}_0} \nabla \Phi^{\eps}_a \cdot \nabla \Phi^{\eps}_b \ dx \end{bmatrix}_{a,b \in \{1,2,3\}}
=   \begin{bmatrix} \displaystyle {\eps}^{2 + \delta_{a,3} + \delta_{b,3}} \int_{{\mathcal F}_{0}} \nabla \Phi^{1}_{a} \cdot \nabla \Phi^{1}_{b} \end{bmatrix}_{a,b \in \{1,2,3\}}.
\end{equation}
The matrix $\mathcal{M}^{\eps}$ is symmetric and positive definite. The matrix $\mathcal{M}^{\eps}_2$ actually encodes the phenomenon of added mass, which, loosely speaking,  measures how much the  surrounding fluid resists the acceleration as the body moves through it. \par
Using this added mass matrix, one can deduce a conserved quantity.
\begin{Proposition} \label{PropKirchoffSueur}
The following quantity is conserved along the motion:
\begin{eqnarray*}
2 \mathcal{H}^{\eps} = X^T \mathcal{M}^{\varepsilon} X
- \int_{\mathcal{F}^{\varepsilon}_0 \times  \mathcal{F}^{\varepsilon}_0 }  G^{\varepsilon}_{H} (x,y) \omega^{\varepsilon} (x) \omega^{\varepsilon} (y) \, dx \, dy
-  2  \gamma \int_{\mathcal{F}^{\varepsilon}_0  } \omega^{\varepsilon} (x)  \Psi_{H^{\varepsilon}} (x) \, dx ,
\end{eqnarray*}
where 
\begin{equation*}
X := \begin{pmatrix} \ell^{\varepsilon}_1 \\ \ell^{\varepsilon}_2 \\ r^{\varepsilon} \end{pmatrix}.
\end{equation*}
%
%
%
\end{Proposition}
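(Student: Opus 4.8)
The plan is to read $2\mathcal{H}^{\eps}$ as twice the total energy of the fluid--solid system, renormalized by subtracting the (time-independent) infinite self-energy carried by the circulation at infinity, and to establish its conservation in two steps: a \emph{dynamic} energy balance on the time-dependent quantities, and a \emph{static} identity valid at each fixed time. Throughout I exploit that $G^{\eps}_{H}$, $\Psi_{H^{\eps}}$, $H^{\eps}$, the potentials $\nabla\Phi^{\eps}_{i}$ and the matrix $\mathcal{M}^{\eps}$ depend only on the fixed domain $\mathcal{F}^{\eps}_{0}$ and are therefore constant in time. I write $b^{\eps}:=v^{\eps}-\ell^{\eps}-r^{\eps}x^{\perp}$ for the relative velocity, which is divergence free and, by \eqref{Euler13}, tangent to $\partial\mathcal{S}^{\eps}_{0}$, and set $\Omega_{R}:=\mathcal{F}^{\eps}_{0}\cap B(0,R)$. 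From \eqref{vitedec2}, Proposition \ref{propdefK} and \eqref{ComportementPhii} one has the decay $v^{\eps}=(\gamma+\alpha^{\eps})H^{\eps}+\mathcal{O}(|x|^{-2})$, with $\gamma$ and $\alpha^{\eps}$ conserved, which is what makes the renormalization below legitimate.

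In the dynamic step I show that $\mathcal{E}^{\eps}:=\tfrac12 X^{T}\mathcal{M}^{\eps}_{1}X+\tfrac12\lim_{R\to\infty}\big(\int_{\Omega_{R}}|v^{\eps}|^{2}-(\gamma+\alpha^{\eps})^{2}\int_{\Omega_{R}}|H^{\eps}|^{2}\big)$ is conserved. Taking the scalar product of \eqref{Euler11} with $v^{\eps}$, the gyroscopic term disappears since $v^{\eps}\cdot(v^{\eps})^{\perp}=0$, and as $\div b^{\eps}=\div v^{\eps}=0$ the identity becomes $\tfrac12\partial_{t}|v^{\eps}|^{2}=-\tfrac12\div(|v^{\eps}|^{2}b^{\eps})-\div(q^{\eps}v^{\eps})$. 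Integrating over the fixed set $\Omega_{R}$, the convective flux across $\partial\mathcal{S}^{\eps}_{0}$ vanishes by tangency of $b^{\eps}$, the pressure flux there equals $-\int_{\partial\mathcal{S}^{\eps}_{0}}q^{\eps}(\ell^{\eps}+r^{\eps}x^{\perp})\cdot n\,ds$ by \eqref{Euler13}, and the fluxes on $\partial B_{R}$ vanish as $R\to\infty$ by the above decay (the divergent term $(\gamma+\alpha^{\eps})^{2}\int_{\Omega_{R}}|H^{\eps}|^{2}$ being time-independent, it does not contribute to the derivative). On the other hand, differentiating $\tfrac12 X^{T}\mathcal{M}^{\eps}_{1}X$ and inserting Newton's laws \eqref{Solide11}--\eqref{Solide12} gives $+\int_{\partial\mathcal{S}^{\eps}_{0}}q^{\eps}(\ell^{\eps}+r^{\eps}x^{\perp})\cdot n\,ds$, the term $mr^{\eps}(\ell^{\eps})^{\perp}$ dropping out because $\ell^{\eps}\cdot(\ell^{\eps})^{\perp}=0$. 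The two boundary integrals cancel, so $\tfrac{d}{dt}\mathcal{E}^{\eps}=0$.

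The static step is the identity that $\lim_{R\to\infty}\big(\int_{\Omega_{R}}|v^{\eps}|^{2}-(\gamma+\alpha^{\eps})^{2}\int_{\Omega_{R}}|H^{\eps}|^{2}\big)$ equals $X^{T}\mathcal{M}^{\eps}_{2}X-\int_{\mathcal{F}^{\eps}_{0}\times\mathcal{F}^{\eps}_{0}}G^{\eps}_{H}\omega^{\eps}\omega^{\eps}-2\gamma\int_{\mathcal{F}^{\eps}_{0}}\omega^{\eps}\Psi_{H^{\eps}}$; combined with the first step the equality $\mathcal{H}^{\eps}=\mathcal{E}^{\eps}$ follows and gives the conservation of $\mathcal{H}^{\eps}$. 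I decompose $v^{\eps}$ by \eqref{vitedec2} into its potential part $\nabla\Phi:=\ell^{\eps}_{1}\nabla\Phi^{\eps}_{1}+\ell^{\eps}_{2}\nabla\Phi^{\eps}_{2}+r^{\eps}\nabla\Phi^{\eps}_{3}$ and its divergence-free, boundary-tangent part $V:=K^{\eps}_{H}[\omega^{\eps}]+\gamma H^{\eps}=\nabla^{\perp}\Psi$, with $\Psi=\int_{\mathcal{F}^{\eps}_{0}}G^{\eps}_{H}(\cdot,y)\omega^{\eps}(y)\,dy+\gamma\Psi_{H^{\eps}}$. The cross term $\int\nabla\Phi\cdot V$ vanishes (integrate $\div(\Phi V)$ and use $V\cdot n=0$ on $\partial\mathcal{S}^{\eps}_{0}$), so the energy splits, the potential part being $X^{T}\mathcal{M}^{\eps}_{2}X$ by \eqref{EvoMatrice2sec}. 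For $\int_{\Omega_{R}}|V|^{2}=\int_{\Omega_{R}}|\nabla\Psi|^{2}$ I apply Green's identity with $\Delta\Psi=\omega^{\eps}$: the bulk term $-\int_{\Omega_{R}}\Psi\,\omega^{\eps}$ produces $-\int_{\mathcal{F}^{\eps}_{0}\times\mathcal{F}^{\eps}_{0}}G^{\eps}_{H}\omega^{\eps}\omega^{\eps}$ together with one factor $-\gamma\int\omega^{\eps}\Psi_{H^{\eps}}$; the boundary contribution on $\partial\mathcal{S}^{\eps}_{0}$, where $\Psi$ is the constant $\int\omega^{\eps}\Psi_{H^{\eps}}$ (since $G^{\eps}$ and $\Psi_{H^{\eps}}$ vanish there) and $V$ has circulation $\gamma$, supplies the remaining $-\gamma\int\omega^{\eps}\Psi_{H^{\eps}}$; and the flux on $\partial B_{R}$ reproduces precisely the renormalizing quantity $(\gamma+\alpha^{\eps})^{2}\int_{\Omega_{R}}|H^{\eps}|^{2}$.

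The main obstacle is the analysis at infinity: one must justify that the $\partial B_{R}$ fluxes vanish in the dynamic step, that $\tfrac{d}{dt}$ commutes with $\lim_{R\to\infty}$ so that $\mathcal{E}^{\eps}$ is genuinely differentiable with the computed derivative, and that the logarithmically growing stream function $\Psi$ yields exactly the claimed boundary contributions in the static step. All of this rests on the uniform decay $v^{\eps}=(\gamma+\alpha^{\eps})H^{\eps}+\mathcal{O}(|x|^{-2})$ together with the boundedness in time of the support of $\omega^{\eps}$ and of $(\ell^{\eps},r^{\eps})$ provided by the a priori estimates. A secondary point is that the computations are carried out for smooth solutions; for the weak solutions of Theorem \ref{ThmYudo} one passes to the limit in the regularizing sequence used to construct them, the conservations of Theorem \ref{ThmYudo} (in particular of $\gamma$, $\alpha^{\eps}$ and the $L^{p}$ norms of $\omega^{\eps}$) ensuring the stability of each term.
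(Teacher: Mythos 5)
Your proposal is correct and follows essentially the same route as the paper: the paper likewise first identifies $2\mathcal{H}^{\eps}$ with the renormalized kinetic energy $m|\ell^{\eps}|^{2}+\mathcal{J}_{\eps}(r^{\eps})^{2}+\int_{\mathcal{F}^{\eps}_{0}}\big(|\hat{v}^{\eps}|^{2}+2(\gamma+\alpha^{\eps})\hat{v}^{\eps}\cdot H^{\eps}\big)$ with $\hat{v}^{\eps}=v^{\eps}-(\gamma+\alpha^{\eps})H^{\eps}$ (which is exactly your subtraction of $(\gamma+\alpha^{\eps})^{2}\int_{\Omega_{R}}|H^{\eps}|^{2}$, expanded), via the same Green's-identity/stream-function computation, and then differentiates in time by pairing the momentum equation with $v^{\eps}$ and Newton's laws, with the same cancellation of the pressure work and the same care about decay at infinity. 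The only differences are cosmetic (order of the static and dynamic steps, truncation on $B(0,R)$ versus direct integrability of $\partial_{t}v^{\eps}\cdot v^{\eps}$, and decomposing $K^{\eps}_{H}[\omega^{\eps}]+\gamma H^{\eps}$ rather than $K^{\eps}[\omega^{\eps}]+\nabla\Phi^{\eps}$).
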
 
The proof of Proposition \ref{PropKirchoffSueur} is given in Section \ref{TR}. \par
We will need the following technical lemma in order to derive some uniform estimates from Proposition \ref{PropKirchoffSueur}.
\begin{Lemma}\label{intln}
Let $f$ in $L^1 (\R^{2})\cap L^p (\R^2)$. We denote by 
\begin{equation} \label{DefRhof}
\rho_f :=\inf \, \{ d >1 \ / \  {\rm Supp}(f) \subset B(0,d) \}.
\end{equation}
Then there exists $C>0$ such that
\begin{equation*}
\int_{\R^2} \Bigl| \ln |x-y| f(x) \Bigl| \, dx \leq C\|f \|_{L^p} + \ln(2\rho_f) \|f\|_{L^1},
\end{equation*}
for any $y\in B(0,\rho_f)$.
\end{Lemma}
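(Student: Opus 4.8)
The plan is to decompose the integral according to the size of $|x-y|$, isolating the logarithmic singularity near the diagonal from the mild growth of $\ln$ at moderate distances. I would write
$$\int_{\R^2} \big| \ln|x-y| \, f(x) \big| \, dx = I_{\mathrm{near}} + I_{\mathrm{far}},$$
where $I_{\mathrm{near}}$ is the contribution of $\{x : |x-y| \le 1\}$ and $I_{\mathrm{far}}$ that of $\{x : |x-y| > 1\}$.

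For the far part, note that only $x \in \mathrm{Supp}(f)$ contributes, and by definition of $\rho_f$ one has $\mathrm{Supp}(f) \subset \overline{B(0,\rho_f)}$; together with the hypothesis $y\in B(0,\rho_f)$ this gives $|x-y| \le |x|+|y| < 2\rho_f$. Since moreover $|x-y| > 1$ on this region, we have $0 \le \ln|x-y| \le \ln(2\rho_f)$ (using $\rho_f>1$), whence $I_{\mathrm{far}} \le \ln(2\rho_f)\,\|f\|_{L^1}$.

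For the near part, where $\ln|x-y| \le 0$, I would apply H\"older's inequality with the conjugate exponent $p' = p/(p-1) \in [1,2)$:
$$I_{\mathrm{near}} \le \|f\|_{L^p} \left( \int_{|z| \le 1} \big| \ln|z| \big|^{p'} \, dz \right)^{1/p'}.$$
The remaining integral is a finite constant depending only on $p$, since in two dimensions $\int_0^1 |\ln r|^{p'}\, r\, dr < +\infty$ for every finite $p'$; when $p=+\infty$ one takes $p'=1$ and $\|f\|_{L^p}=\|f\|_{L^\infty}$, the same integrability holding. Denoting this constant by $C$, one gets $I_{\mathrm{near}} \le C\,\|f\|_{L^p}$, and summing the two estimates yields the claim.

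The only quantitative input is the finiteness of $\int_{|z|\le 1}|\ln|z||^{p'}\,dz$, which is immediate in polar coordinates because the Jacobian factor $r$ tames the logarithmic singularity at any finite power. I do not expect any genuine obstacle here: the argument is a routine dyadic splitting followed by H\"older, with the support condition supplying the control on the large-distance regime.
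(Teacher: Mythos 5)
Your proposal is correct and is essentially identical to the paper's own proof: the same split of the integral at $|x-y|=1$, H\"older's inequality with exponent $p'$ on the near region using the local integrability of $|\ln|z||^{p'}$, and the bound $|x-y|<2\rho_f$ from the support condition on the far region. No issues.
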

\begin{proof}
We fix $y\in B(0,\rho_f)$ and we decompose the integral:
\begin{eqnarray*}
\int_{\R^2}  \Bigl| \ln |x-y| f(x) \Bigl| \, dx &=&\int_{|x-y|\leq 1}  \Bigl| \ln |x-y| f(x) \Bigl| \, dx + \int_{|x-y|\geq 1}  \Bigl| \ln |x-y| f(x) \Bigl| \, dx \\
&\leq& \|f \|_{L^p}  \| \ln |\cdot|  \|_{L^{p'}(B(0,1))} + \ln(2\rho_f) \|f\|_{L^1},
\end{eqnarray*}
where
\begin{equation*}
p' := \frac{p}{p-1}.
\end{equation*}
This ends the proof.
\end{proof}
As a consequence we have the following result.
\begin{Proposition}\label{ProRata}
One has the following estimate for some constant $C=C ( m, {\mathcal J}_{0}, \| w_{0} \|_{L^{1} \cap L^{p}} , |\ell_{0}|, |r_{0}|,|\gamma|, \rho_{w_{0}})$, depending only on these values and the geometry for $\varepsilon=1$:
\begin{equation} \label{EstNRJ}
| \ell^{\varepsilon}(t) | + |\varepsilon r^{\varepsilon}(t)| \leq C [1 + \ln (\rho^{\varepsilon}(t))],
\end{equation}
where
\begin{equation*}
\rho^{\varepsilon}(t) := \rho_{\omega^{\varepsilon}(t,\cdot)} =\inf \{ d >1 \ / \  {\rm Supp}(\omega^{\varepsilon}(t,\cdot)) \subset B(0,d) \}.
\end{equation*}
\end{Proposition}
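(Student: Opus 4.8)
The plan is to exploit the conserved energy of Proposition \ref{PropKirchoffSueur}, in which the quadratic term $X^T \mathcal{M}^\eps X$ controls exactly the quantity $|\ell^\eps|^2 + \eps^2 |r^\eps|^2$ we wish to bound, while the two remaining interaction integrals will be estimated via Lemma \ref{intln}. The first, easy, observation is a lower bound on the quadratic form: since $\mathcal{M}^\eps_2$ is a Gram matrix of the $\nabla \Phi^\eps_a$, it is positive semidefinite, so that by \eqref{EvoMatrice2sec}
\[
X^T \mathcal{M}^\eps X \ge X^T \mathcal{M}^\eps_1 X = m |\ell^\eps|^2 + \eps^2 \mathcal{J}_0 |r^\eps|^2 \ge \min(m,\mathcal{J}_0)\,\big(|\ell^\eps|^2 + \eps^2 |r^\eps|^2\big).
\]
Thus it suffices to prove the upper bound $X(t)^T \mathcal{M}^\eps X(t) \le C\big(1+\ln\rho^\eps(t)\big)$. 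Writing the conservation of $\mathcal{H}^\eps$ between times $0$ and $t$, I would express $X(t)^T\mathcal{M}^\eps X(t)$ as $X_0^T\mathcal{M}^\eps X_0$ plus the time-increments of the two interaction terms, where $X_0=(\ell_0,r_0)$. The quantity $X_0^T\mathcal{M}^\eps X_0$ is bounded uniformly for $\eps\in(0,1]$, since $\eps^2\mathcal{J}_0 r_0^2 \le \mathcal{J}_0 r_0^2$ and every entry of $\mathcal{M}^\eps_2$ carries a factor $\eps^{\ge 2}$.

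The heart of the argument is the control of the two interaction terms. From $\mathcal{T}_\eps(\cdot)=\mathcal{T}(\cdot/\eps)$ and the formulas \eqref{DefPsiHEpsilon}, \eqref{GepsHydro} one has the exact scaling relations $\Psi_{H^\eps}(x)=\Psi_{H^1}(x/\eps)$ and $G^\eps_H(x,y)=G^1_H(x/\eps,y/\eps)$. I would isolate the logarithmic singularities by setting
\[
\tilde\Psi(z):=\Psi_{H^1}(z)-\tfrac{1}{2\pi}\ln|z|=\tfrac{1}{2\pi}\ln\frac{|\mathcal{T}(z)|}{|z|},\qquad \tilde G(z,w):=G^1_H(z,w)-\tfrac{1}{2\pi}\ln|z-w|,
\]
and claim that $\tilde\Psi$ and $\tilde G$ are \emph{bounded} on $\mathcal{F}_{0}$ and $\mathcal{F}_{0}\times\mathcal{F}_{0}$ respectively. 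Granting this, the scaling yields
\[
\Psi_{H^\eps}(x)=\tfrac{1}{2\pi}\ln|x|-\tfrac{1}{2\pi}\ln\eps+\tilde\Psi(x/\eps),\qquad G^\eps_H(x,y)=\tfrac{1}{2\pi}\ln|x-y|-\tfrac{1}{2\pi}\ln\eps+\tilde G(x/\eps,y/\eps).
\]
Because $\int \omega^\eps(t)\,dx=\alpha^\eps$ is conserved by \eqref{DefAlpha2}, the $\eps$-singular constants $-\frac{\ln\eps}{2\pi}\alpha^\eps$ and $-\frac{\ln\eps}{2\pi}(\alpha^\eps)^2$ coincide at times $0$ and $t$ and therefore cancel in the time-increments. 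What survives splits into two types of terms: the remainder contributions $\int\omega^\eps\,\tilde\Psi(\cdot/\eps)$ and $\iint\tilde G(x/\eps,y/\eps)\,\omega^\eps(x)\omega^\eps(y)\,dx\,dy$, bounded by $\|\tilde\Psi\|_\infty\|\omega^\eps\|_{L^1}$ and $\|\tilde G\|_\infty\|\omega^\eps\|_{L^1}^2$, hence by a constant through \eqref{ConsOmega}; and the genuinely logarithmic pieces $\int\omega^\eps(t,x)\ln|x|\,dx$ and $\iint\ln|x-y|\,\omega^\eps(t,x)\omega^\eps(t,y)\,dx\,dy$, which Lemma \ref{intln} bounds (applied with $y=0$, respectively after Fubini in $y$, using $\rho^\eps(t)>1$) by $C\big(1+\ln\rho^\eps(t)\big)$. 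The analogous quantities at time $0$ are uniformly bounded because $\rho^\eps(0)\le\max(1,\rho_{w_0})$ independently of $\eps$, since $\omega^\eps_0=w_{0|\mathcal{F}_0^\eps}$ has support contained in that of $w_0$.

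Assembling these estimates gives $X(t)^T\mathcal{M}^\eps X(t)\le C\big(1+\ln\rho^\eps(t)\big)$, hence $|\ell^\eps(t)|^2+\eps^2|r^\eps(t)|^2\le C\big(1+\ln\rho^\eps(t)\big)$; taking square roots and using $\sqrt{s}\le s$ for $s\ge 1$ together with $1+\ln\rho^\eps(t)\ge 1$ yields \eqref{EstNRJ}. The main obstacle is the boundedness of $\tilde\Psi$ and $\tilde G$, that is the uniform control of $\ln\!\big(|\mathcal{T}(z)|/|z|\big)$ and of the regular part $G^1_H(z,w)-\frac{1}{2\pi}\ln|z-w|$ over the entire \emph{unbounded} domain $\mathcal{F}_0$: one must simultaneously handle the diagonal singularity as $z\to w$ (removed because $\mathcal{T}$ is conformal with nonvanishing derivative), the several regimes at infinity (governed by $\mathcal{T}(z)=\beta z+\tilde\beta+\mathcal{O}(1/z)$ from \eqref{Eq:DevtT}, together with $\mathcal{T}(w)^*\to 0$), and the behavior near $\partial\mathcal{S}_0$, where $|\mathcal{T}|=1$ and $\Psi_{H^1}=0$. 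All of these are $\eps=1$ geometric quantities, consistent with the claimed dependence of $C$.
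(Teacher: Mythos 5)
Your overall strategy is the paper's: use the conservation of $\mathcal{H}^{\varepsilon}$, observe that the $\ln\varepsilon$-singular constants cancel thanks to the conservation of $\alpha^{\varepsilon}$ (the paper packages this as the modified Hamiltonian $\hat{\mathcal H}^{\varepsilon}$), drop the nonnegative term $(X^{\varepsilon})^{T}\mathcal{M}_{2}^{\varepsilon}X^{\varepsilon}$, and control the interaction integrals by Lemma \ref{intln}. But there is a genuine gap in the step you yourself flag as the main obstacle: the function
\begin{equation*}
\tilde G(z,w)=G^{1}_{H}(z,w)-\tfrac{1}{2\pi}\ln|z-w|
=\frac{1}{2\pi}\Big[\ln\frac{|{\mathcal T}(z)-{\mathcal T}(w)|}{|z-w|}+\ln|{\mathcal T}(z)|-\ln|{\mathcal T}(z)-{\mathcal T}(w)^{*}|\Big]
\end{equation*}
is \emph{not} bounded on ${\mathcal F}_{0}\times{\mathcal F}_{0}$. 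The first bracketed term is indeed controlled by the conformality of ${\mathcal T}$ with nonvanishing derivative up to the boundary and the asymptotics \eqref{Eq:DevtT}, and the last two terms compensate each other at infinity. The problem is the image-point term: if $z$ and $w$ both tend to the same point $z_{0}\in\partial{\mathcal S}_{0}$, then $|{\mathcal T}(z)|\to 1$, ${\mathcal T}(w)^{*}={\mathcal T}(w)/|{\mathcal T}(w)|^{2}\to{\mathcal T}(z_{0})$ and hence $|{\mathcal T}(z)-{\mathcal T}(w)^{*}|\to 0$, so $\tilde G\to+\infty$. (Equivalently: on $\partial{\mathcal F}_{0}$ the Green's function vanishes, so its regular part equals $-\frac{1}{2\pi}\ln|z-w|$ there, which blows up on the boundary diagonal; adding $\Psi_{H^{1}}(z)+\Psi_{H^{1}}(w)$, which vanish on the boundary, does not help.) Since after rescaling this singularity sits on $\partial{\mathcal S}^{\varepsilon}_{0}$ and nothing prevents $\omega^{\varepsilon}$ from charging a neighbourhood of the body, the bound $\|\tilde G\|_{\infty}\|\omega^{\varepsilon}\|_{L^{1}}^{2}$ you invoke for the double interaction term is not available.

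The gap is repairable because the blow-up is only logarithmic, but it forces you to treat the image term with the $L^{p}$-part of Lemma \ref{intln} rather than with an $L^{\infty}$ bound. This is exactly what the paper's proof does: it changes variables to $X=\varepsilon{\mathcal T}(x/\varepsilon)$, $Y=\varepsilon{\mathcal T}(y/\varepsilon)$, after which $R_{1}^{\varepsilon}$ splits into the three kernels $\ln|X-Y|$, $-\ln|X-\varepsilon^{2}Y^{*}|$ and $\ln|X|$ acting on $f^{\varepsilon}\in L^{1}\cap L^{p}$; the troublesome image kernel becomes $\ln|X-\varepsilon^{2}Y^{*}|$ with the singular point $\varepsilon^{2}Y^{*}\in B(0,\varepsilon)\subset B(0,\rho_{f^{\varepsilon}})$, and Lemma \ref{intln} (whose $\|f\|_{L^{p}}$ term absorbs precisely such an integrable logarithmic singularity) applies to each of the three pieces. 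So to complete your argument you should either adopt this change of variables, or keep your decomposition but replace the claim ``$\tilde G$ is bounded'' by a splitting of $\tilde G$ into a bounded part plus $-\frac{1}{2\pi}\ln|{\mathcal T}(z)-{\mathcal T}(w)^{*}|$, the latter being estimated through the $L^{p}$ norm of $\omega^{\varepsilon}$ as in Lemma \ref{intln}. The remaining steps of your proposal (positivity of $\mathcal{M}^{\varepsilon}_{2}$, uniform bound on the data at $t=0$, the elementary passage from the quadratic bound to \eqref{EstNRJ}) are correct and match the paper.
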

\begin{proof}
We first add a constant in time to ${\mathcal H}^{\varepsilon}$, in order to get a quantity which is bounded with respect to $\varepsilon$:
\begin{equation*}
\hat{\mathcal H}^{\varepsilon}:= {\mathcal H}^{\varepsilon} - \frac{1}{2}\ln(\varepsilon) (\alpha^{\varepsilon})^{2} - \ln(\varepsilon) \gamma\alpha^{\varepsilon},
\end{equation*}
where $\alpha^{\varepsilon}$ is given by \eqref{DefAlpha2}. 
We decompose
\begin{eqnarray*}
2\hat{\mathcal H}^{\varepsilon}&=& 
m |\ell^{\varepsilon} |^2 + \mathcal{J}_0 ({\varepsilon}r^{\varepsilon})^2 + (X^{\varepsilon})^T \mathcal{M}_2^{\varepsilon} X^{\varepsilon}  \\
&&-  \int_{\mathcal{F}^{\varepsilon}_0 \times  \mathcal{F}^{\varepsilon}_0 }
\Bigl(G^{\varepsilon}_{H} (x,y) + \ln(\varepsilon) \Bigl)\omega^{\varepsilon} (x) \omega^{\varepsilon} (y) \, dx \, dy
-  2  \gamma \int_{\mathcal{F}^{\varepsilon}_0  } \omega^{\varepsilon} (x) \Bigl( \Psi_{H^{\varepsilon}}  (x)+\ln(\varepsilon)\Bigl) \, dx,
\end{eqnarray*}
and we denote the last two terms as follows:
\begin{equation} \label{decompoH}
2\hat{\mathcal H}^{\varepsilon} =: m |\ell^{\varepsilon} |^2 + \mathcal{J}_0 ({\varepsilon}r^{\varepsilon})^2 
+ (X^{\varepsilon})^T \mathcal{M}_2^{\varepsilon} X^{\varepsilon} -R_1^{\varepsilon} -2\gamma R_2^{\varepsilon}.
\end{equation}
We begin by estimating $R_1^{\varepsilon}$ using \eqref{Gepsilon} and \eqref{GepsHydro}:
\begin{equation*}
R_1^{\varepsilon}=  
\frac{1}{2\pi}\int_{\mathcal{F}^{\varepsilon}_0 \times  \mathcal{F}^{\varepsilon}_0 } \Bigl(
\ln \Big| \varepsilon {\mathcal T} \left(\frac{x}{\varepsilon}\right) - \varepsilon {\mathcal T}\left(\frac{y}{\varepsilon}\right) \Big| 
-  \ln \Big| \varepsilon {\mathcal T}\left(\frac{x}{\varepsilon}\right) 
- \varepsilon {\mathcal T}\left(\frac{y}{\varepsilon}\right)^* \Big| 
+  \ln \Big| \varepsilon {\mathcal T}\left(\frac{x}{\varepsilon}\right) \Big|
\Bigl)\omega^{\varepsilon} (x) \omega^{\varepsilon} (y) \, dx \, dy.
\end{equation*}
Next we make the change of variables $X= \varepsilon {\mathcal T}(\frac{x}{\varepsilon})$ and $Y= \varepsilon {\mathcal T}(\frac{y}{\varepsilon})$ to obtain
\begin{equation} \label{R1eps}
R_1^{\varepsilon}=  \frac{1}{2\pi}\int_{B(0,\varepsilon)^c \times B(0,\varepsilon)^c } 
\Bigl( \ln | X - Y | -  \ln |X - \varepsilon^2 Y^*| +  \ln | X | \Bigl) 
f^{\varepsilon} (X) f^{\varepsilon} (Y) \, dX \, dY,
\end{equation}
where 
\begin{equation*}
f^{\varepsilon}(z) := \omega^{\varepsilon}\left(\varepsilon {\mathcal T}^{-1}\left(\frac{z}{\varepsilon}\right)\right) \, |\det (D{\mathcal T})^{-1}| \left(\frac{z}{\varepsilon}\right). 
\end{equation*}
Changing variables back, we note that
\begin{equation*}
\|f^{\varepsilon} \|_{L^1(B(0,\varepsilon)^c)}
= \|\omega^{\varepsilon} \|_{L^1(\mathcal{F}^{\varepsilon}_0 )}
\leq \| w_0 \|_{L^1(\R^{2})}.
\end{equation*}
Moreover, as $D{\mathcal T}^{-1}$ is bounded (due to \eqref{Eq:DevtT} and the regularity of ${\mathcal S}_{0}$), we have that
\begin{equation*}
\|f^{\varepsilon} \|_{L^p (B(0,\varepsilon)^c)} \leq C\|\omega^{\varepsilon} \|_{L^p (\mathcal{F}^{\varepsilon}_0 )} \leq C \| w_0 \|_{L^p(\R^{2})}.
\end{equation*} 
%
%
As $Y^*\in B(0,1/\varepsilon)$ for $Y \in B(0,\varepsilon)^{c}$, we have that $\varepsilon^2 Y^* \in B(0,\varepsilon) \subset B(0,\rho_{f^{\varepsilon}})$.
Using Lemma \ref{intln} and the fact that in \eqref{R1eps} it is enough to consider $Y \in \mbox{Supp}(f^{\varepsilon}) \subset B(0,\rho_{f^{\varepsilon}})$, we deduce that
\begin{equation*}
|R_1^{\varepsilon}| \leq 3 (C \|w_0 \|_{L^p(\R^{2})} + \ln(2\rho_{f^{\varepsilon}})
\|w_0\|_{L^1(\R^{2})}) \|w_0\|_{L^1(\R^{2})}.
\end{equation*}
Thanks to the behavior \eqref{Eq:DevtT} at infinity of ${\mathcal T}$, we know that there exists $C_0\geq \beta$ such that ${\mathcal T}(B(0,d))\subset B(0,C_0 d)$ for any $d>1$. Then,
\begin{equation*}
\mbox{Supp}(f^{\varepsilon}(t)) = \varepsilon {\mathcal T} \left(\frac{\mbox{Supp}(\omega^{\varepsilon}(t))}{\varepsilon}\right) \subset B(0,C_0 \rho^{\varepsilon}),
\end{equation*}
which involves that $\rho_{f^{\varepsilon}} \leq C_0 \rho^{\varepsilon}$, and we finally obtain
\begin{equation}\label{R1}
|R_1^{\varepsilon}(t)| \leq C_1 [1 + \ln (\rho^{\varepsilon}(t))].
\end{equation}
%
%
Using the same reasoning on $R_2^{\varepsilon}$ as for $R_1^{\varepsilon}$, we obtain
\begin{equation*}
R_2^{\varepsilon}=  \frac{1}{2\pi}\int_{\mathcal{F}^{\varepsilon}_0}   \ln \Big| \varepsilon {\mathcal T}\left(\frac{x}{\varepsilon}\right) \Big|\omega^{\varepsilon} (x) \, dx.
\end{equation*}
Hence we also deduce
\begin{equation}\label{R2}
|R_2^{\varepsilon}(t)| \leq C_2 [1 + \ln (\rho^{\varepsilon}(t))].
\end{equation}
Finally, we use that $\hat{\mathcal H}^{\varepsilon}$ is constant in time, and putting together \eqref{decompoH}, \eqref{R1} and \eqref{R2}, we get:
\begin{eqnarray*}
m |\ell^{\varepsilon} |^2(t) + \mathcal{J}_0 ({\varepsilon}r^{\varepsilon}(t))^2
&\leq& m |\ell^{\varepsilon} |^2(t) + \mathcal{J}_0 ({\varepsilon}r^{\varepsilon}(t))^2 + (X^{\varepsilon})^T \mathcal{M}_2^{\varepsilon} X^{\varepsilon}(t) \\
&\leq& m |\ell_0 |^2 + \mathcal{J}_0 ({\varepsilon}r_0)^2
+ X_0^T \mathcal{M}_2^\varepsilon X_0 -R_1^{\varepsilon}(0) 
-2\gamma R_2^{\varepsilon}(0)+R_1^{\varepsilon}(t) +2\gamma R_2^{\varepsilon}(t)\\
&\leq& C [1 + \ln (\rho^{\varepsilon}(t))].
\end{eqnarray*}
Above we used the notation $X_{0}=(\ell_{0},r_{0})$ and the boundedness of ${\mathcal M}_{2}^{\varepsilon}$ with respect to $\varepsilon$ which is a consequence of \eqref{EvoMatrice2sec}. This concludes the proof of Proposition \ref{ProRata}.
\end{proof}
%
%
%
%
%
%
\subsection{Velocity}
\label{APF}
We will use the following lemma  (see \cite[Theorem 4.1]{ift_lop_euler} and \cite[Lemma 3.5]{Cricri}).
\begin{Lemma} \label{ift_lop}
There exists a constant $C>0$ which depends only on the shape of the solid for $\varepsilon=1$ such that for any $ \omega$ smooth enough,
\begin{equation} \label{EstIFTLOP}
\|  K^{\eps}_H [\omega ]  \|_{L^{\infty} ( \mathcal{F}^{\eps}_{0}) }  \leqslant C  \|    \omega \|^{1-\frac{p'}{2}}_{ L^{1} ( \mathcal{F}^{\eps}_{0} )}   \|    \omega \|^{\frac{p'}{2}}_{ L^{p} ( \mathcal{F}^{\eps}_{0} )} .
\end{equation}
\end{Lemma}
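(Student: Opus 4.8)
The plan is to reduce to the case $\eps=1$ by scaling, then to prove a pointwise bound of the form $C/|x-y|$ on the kernel of $K^{1}_{H}$, and finally to conclude by a standard splitting-and-optimization argument. The exponents $1-\frac{p'}{2}$ and $\frac{p'}{2}$ (which sum to $1$) signal that \eqref{EstIFTLOP} is homogeneous of degree one in $\om$, and in fact fully scale invariant, which is what makes the reduction to $\eps=1$ possible.

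First I would exploit the scaling. From the explicit formula \eqref{GepsHydro} together with ${\mathcal T}_{\eps}(\eps X)={\mathcal T}(X)$, one checks directly that $G_{H}^{\eps}(\eps X,\eps Y)=G_{H}^{1}(X,Y)$, hence by the chain rule $\nabla_{x}G_{H}^{\eps}(\eps X,\eps Y)=\tfrac{1}{\eps}(\nabla_{X}G_{H}^{1})(X,Y)$. Performing the change of variables $y=\eps Y$ in the integral defining $K^{\eps}_{H}$ then yields $K^{\eps}_{H}[\om](\eps X)=\eps\,K^{1}_{H}[\tilde\om](X)$ with $\tilde\om(Y):=\om(\eps Y)$ on $\mathcal{F}^{1}_{0}$. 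Since $\|\tilde\om\|_{L^{1}(\mathcal{F}^{1}_{0})}=\eps^{-2}\|\om\|_{L^{1}(\mathcal{F}^{\eps}_{0})}$ and $\|\tilde\om\|_{L^{p}(\mathcal{F}^{1}_{0})}=\eps^{-2/p}\|\om\|_{L^{p}(\mathcal{F}^{\eps}_{0})}$, the powers of $\eps$ in the right-hand side of \eqref{EstIFTLOP} combine as $\eps^{\,1-2(1-p'/2)-(2/p)(p'/2)}=\eps^{0}$, using $p'(1-1/p)=1$. Thus it suffices to prove \eqref{EstIFTLOP} for $\eps=1$, and the constant then depends only on the shape of ${\mathcal S}_{0}$.

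Next I would establish the kernel bound $|\nabla_{x}^{\perp}G^{1}_{H}(x,y)|\le C/|x-y|$ for all $x\neq y$ in $\mathcal{F}^{1}_{0}$. Here I use that ${\mathcal T}$ is a biholomorphism $\mathcal{F}_{0}\to\C\setminus\overline{B}(0,1)$ with development \eqref{Eq:DevtT} and that $\partial{\mathcal S}_{0}$ is smooth, so that $D{\mathcal T}$ is bounded above and bounded below up to the boundary; in particular ${\mathcal T}$ is bi-Lipschitz, $c|x-y|\le|{\mathcal T}(x)-{\mathcal T}(y)|\le C|x-y|$. Differentiating the three logarithmic terms in \eqref{GepsHydro}, the diagonal singularity comes only from $\nabla_{x}^{\perp}\ln|{\mathcal T}(x)-{\mathcal T}(y)|$, which is $O(1/|x-y|)$, while the two remaining terms are smooth there since ${\mathcal T}(y)^{*}$ lies inside the unit disk and ${\mathcal T}(x)$ outside. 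The delicate regimes, which I expect to be the main obstacle, are: the behaviour near $\partial{\mathcal S}_{0}$, where $\ln|{\mathcal T}(x)-{\mathcal T}(y)^{*}|$ also becomes singular but cancels against $\ln|{\mathcal T}(x)-{\mathcal T}(y)|$ because $|{\mathcal T}(y)^{*}|\to1$ as $y\to\partial{\mathcal S}_{0}$ (the Dirichlet boundary condition); and the behaviour as $y\to\infty$, where the reflected term and the harmonic term $\nabla_{x}^{\perp}\ln|{\mathcal T}(x)|$ cancel to produce genuine $O(1/|x-y|)$ decay rather than a mere $O(1/|x|)$. This case analysis is precisely the content of \cite[Theorem 4.1]{ift_lop_euler} and \cite[Lemma 3.5]{Cricri}.

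Finally, with the kernel bound in hand, I would split the Biot-Savart integral at a radius $R>0$:
\begin{equation*}
|K^{1}_{H}[\om](x)|\le C\int_{|x-y|<R}\frac{|\om(y)|}{|x-y|}\,dy+C\int_{|x-y|\ge R}\frac{|\om(y)|}{|x-y|}\,dy.
\end{equation*}
For the near field I apply Hölder's inequality with exponents $p$ and $p'=p/(p-1)$; since $p>2$ forces $p'<2$, the function $|x-y|^{-p'}$ is locally integrable in the plane and $\big(\int_{|x-y|<R}|x-y|^{-p'}\,dy\big)^{1/p'}=C R^{1-2/p}$, so this contribution is bounded by $C\|\om\|_{L^{p}}R^{1-2/p}$. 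For the far field I simply bound $1/|x-y|\le 1/R$ and obtain $C\|\om\|_{L^{1}}/R$. Choosing $R=(\|\om\|_{L^{1}}/\|\om\|_{L^{p}})^{p'/2}$ balances the two contributions and, using $1-2/p=\tfrac{p-2}{2(p-1)}=1-\tfrac{p'}{2}$, yields exactly $C\|\om\|_{L^{1}}^{1-p'/2}\|\om\|_{L^{p}}^{p'/2}$, which is \eqref{EstIFTLOP}. Note that no compact support assumption on $\om$ is needed, since the splitting makes the integral converge for any $\om\in L^{1}\cap L^{p}$.
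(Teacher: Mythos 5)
Your proof is correct and follows essentially the same route as the paper's source for this lemma: the paper itself gives no proof, deferring entirely to \cite[Theorem 4.1]{ift_lop_euler} and \cite[Lemma 3.5]{Cricri}, and those references proceed exactly as you do (scaling reduction to $\varepsilon=1$, the kernel bound $|\nabla_x^\perp G_H^1(x,y)|\le C/|x-y|$, then the split-and-optimize interpolation). Your scaling computation and the choice $R=(\|\om\|_{L^1}/\|\om\|_{L^p})^{p'/2}$ are both right; the only blemish is the stated identity $1-2/p=\tfrac{p-2}{2(p-1)}$, which is false (the left side is $\tfrac{p-2}{p}$) --- what you actually need, and what makes the near-field term come out as $\|\om\|_{L^1}^{1-p'/2}\|\om\|_{L^p}^{p'/2}$, is $\bigl(1-\tfrac{2}{p}\bigr)\tfrac{p'}{2}=\tfrac{p-2}{2(p-1)}=1-\tfrac{p'}{2}$; the final bound is unaffected. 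Like the paper, you leave the genuinely delicate step (the uniform kernel estimate near $\partial{\mathcal S}_0$ and at infinity) to the cited references, which is consistent with the level of detail the authors themselves adopt.
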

Combining with the conservation laws \eqref{ConsOmega}, the decomposition \eqref{allo2} and the scaling laws \eqref{phi-scaling}-\eqref{phi-scaling2} we obtain that for any $t>0$,
\begin{equation}\label{infVtilde}
\| \tilde{v}^{\eps} (t,\cdot) \|_{L^{\infty}(\mathcal{F}^{\eps}_{0}) }
\leqslant C \left( | \ell^{\eps }  (t,\cdot ) | + | \varepsilon  r^{\eps}  (t,   \cdot )  |
+  \| w_{0} \|^{1-\frac{p'}{2}}_{ L^{1} ( \R^{2} )} \| w_{0} \|^{\frac{p'}{2}}_{ L^{p} ( \R^{2} )} \right).
\end{equation}
We will also use that combining
Proposition  \ref{propdefKwhole} with \eqref{ConsOmega} yields that for $p<+\infty$ (respectively for $p=+\infty$),
$\| K_{\R^{2}} [\omega^{\eps} (t, \cdot)]  \|_{C^{1-2/p} (\R^2)}$ (resp. $\| K_{\R^{2}} [\omega^{\eps} (t, \cdot)]  \|_{\mathcal{LL} (\R^2)}$) is bounded independently of $t$ and of $\eps$, where $\omega^{\eps}$ is extended by  $0$ inside ${\mathcal S}^{\varepsilon}_0$.
\subsection{Support of the vorticity}
\label{RhoBorne}
We start with the following result about $\rho^{\varepsilon}$.
\begin{Lemma}
\label{support}
For all $t \geq 0$, 
\begin{equation*}
\rho^{\varepsilon}(t) \leq \rho^{\varepsilon}(0) + \int_{0}^{t} \| v^{\varepsilon} - \ell^{\varepsilon} \|_{L^{\infty}(\R^{2} \setminus B(0,1))}.
\end{equation*}
\end{Lemma}
This lemma will be proven in Section \ref{PreuveYudoAdd}, together with the properties of solutions given by Theorem \ref{ThmYudo}. \par
\ \par
Let us now deduce an estimate on $\rho$ in terms of the time and of the initial data. We note that $\rho^{\varepsilon}(0)$ does not depend on $\varepsilon$. 
We also see that, due to \eqref{ScalingH} and \eqref{HalInfini}, $\| H^\varepsilon \|_{L^{\infty}(\R^{2} \setminus B(0,1))}$ is bounded independently of $\varepsilon$. It follows from \eqref{allo} and \eqref{infVtilde} that
\begin{equation*}
\rho^{\varepsilon}(t) \leq \rho^{\varepsilon}(0) 
+ C \int_{0}^{t} \left( |  \ell^{\eps }  (\tau, \cdot ) | + 
| \varepsilon  r^{\eps}  (\tau, \cdot ) | 
+  \| w_{0} \|^{1-\frac{p'}{2}}_{ L^{1} ( \R^{2} )} 
\| w_{0} \|^{\frac{p'}{2}}_{ L^{p} ( \R^{2})} 
+ |\gamma| \right) \, d\tau .
\end{equation*}

Using \eqref{EstNRJ}, it follows that for some constants $C_{1}$, $C_{2}>0$ depending only on $m$, ${\mathcal J}_{0}$, $\ell_{0}$, $r_{0}$, $w_{0}$, $\rho_{w_{0}}$ and the geometry for $\varepsilon=1$, we have:
\begin{equation*}
\rho^{\varepsilon}(t) \leq C_{1} + C_{2} \int_{0}^{t} (1 + \ln(\rho^{\varepsilon}(\tau)) ) \,d\tau.
\end{equation*}
Using Gronwall's lemma, we deduce that for any $T>0$, $\rho^{\varepsilon}$ is bounded on $[0,T]$ independently of $\varepsilon$.
\subsection{Main a priori bounds}
Gathering the previous estimates, we deduce the following.
\begin{Proposition} \label{Prop:APE}
For all $T>0$, $\|\rho^{\varepsilon} \|_{L^{\infty}(0,T)}$, $\| \ell^{\varepsilon} \|_{L^{\infty}(0,T)}$, $\| \varepsilon r^{\varepsilon} \|_{L^{\infty}(0,T)}$,  $\| \tilde{v}^{\eps} \|_{L^{\infty}(0,T; L^{\infty}(\mathcal{F}^{\eps}_{0})) }$ and $\| K_{\R^{2}} [\omega^{\eps}]  \|_{L^{\infty}(0,T; C^{1-2/p}  (\R^2))}$ if $p<+\infty$ (resp. $\| K_{\R^{2}} [\omega^{\eps}]  \|_{L^{\infty}(0,T;\mathcal{LL} (\R^2))}$ if $p=+\infty$) are bounded independently of $\varepsilon>0$.
\end{Proposition}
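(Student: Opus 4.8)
The plan is to assemble Proposition \ref{Prop:APE} directly from the five estimates already established in the preceding subsections, observing that each of the listed quantities has been controlled (possibly in terms of the others) in such a way that the dependencies close up. First I would recall that the bound on $\rho^{\varepsilon}$ is exactly the output of Subsection \ref{RhoBorne}: combining Lemma \ref{support}, the decomposition \eqref{allo} of $v^{\varepsilon}$ into $\tilde{v}^{\varepsilon}+\gamma H^{\varepsilon}$, the uniform bound on $\| H^{\varepsilon}\|_{L^{\infty}(\R^{2}\setminus B(0,1))}$ coming from \eqref{ScalingH}–\eqref{HalInfini}, and the velocity bound \eqref{infVtilde}, one reduces to the integral inequality $\rho^{\varepsilon}(t) \leq C_{1} + C_{2}\int_{0}^{t}(1+\ln \rho^{\varepsilon}(\tau))\,d\tau$, whence Gronwall's lemma gives a bound on $\|\rho^{\varepsilon}\|_{L^{\infty}(0,T)}$ independent of $\varepsilon$. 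The crucial point is that the energy estimate \eqref{EstNRJ} of Proposition \ref{ProRata} expresses $|\ell^{\varepsilon}|+|\varepsilon r^{\varepsilon}|$ in terms of $\ln \rho^{\varepsilon}$ alone, so the logarithmic growth keeps the Gronwall argument from blowing up in finite time.

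Next I would feed this uniform bound on $\rho^{\varepsilon}$ back into \eqref{EstNRJ} to conclude that $\|\ell^{\varepsilon}\|_{L^{\infty}(0,T)}$ and $\|\varepsilon r^{\varepsilon}\|_{L^{\infty}(0,T)}$ are themselves bounded independently of $\varepsilon$, since $\ln \rho^{\varepsilon}$ is now uniformly bounded on $[0,T]$. With $|\ell^{\varepsilon}|$ and $|\varepsilon r^{\varepsilon}|$ under control, the velocity estimate \eqref{infVtilde} immediately yields the uniform bound on $\|\tilde{v}^{\varepsilon}\|_{L^{\infty}(0,T;L^{\infty}(\mathcal{F}^{\eps}_{0}))}$, because the right-hand side of \eqref{infVtilde} then consists of quantities all bounded independently of $\varepsilon$ (the remaining term being the fixed combination of $\|w_{0}\|_{L^{1}}$ and $\|w_{0}\|_{L^{p}}$). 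Finally, the bound on $\|K_{\R^{2}}[\omega^{\varepsilon}]\|_{L^{\infty}(0,T;C^{1-2/p}(\R^{2}))}$ (resp. the $\mathcal{LL}$ bound when $p=+\infty$) is the observation recorded at the end of Subsection \ref{APF}: Proposition \ref{propdefKwhole} applied to $\omega^{\varepsilon}$ extended by zero inside the solid gives a bound in terms of $\|\omega^{\varepsilon}\|_{L^{1}\cap L^{p}}$, which is preserved in time and dominated by $\|w_{0}\|_{L^{1}\cap L^{p}}$ via the conservation laws \eqref{ConsOmega}, hence uniform in both $t$ and $\varepsilon$.

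The structure of the argument is therefore a short bootstrap in which the logarithmically-controlled kinematic quantities $\ell^{\varepsilon}, \varepsilon r^{\varepsilon}$ and the vorticity support $\rho^{\varepsilon}$ are shown to be mutually bounded through the Gronwall step, after which the two velocity bounds follow without any further work. I do not expect a genuine obstacle here, since every ingredient has been isolated in Propositions \ref{ProRata} and \ref{propdefKwhole}, Lemma \ref{ift_lop} (through \eqref{infVtilde}), Lemma \ref{support}, and the conservation laws \eqref{ConsOmega}; the only subtlety worth flagging is that one must apply these in the correct order, first closing the $\rho^{\varepsilon}$ estimate before extracting the uniform bounds on $\ell^{\varepsilon}$ and $\varepsilon r^{\varepsilon}$, since the latter are a priori only known in terms of $\ln \rho^{\varepsilon}$. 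The care needed to ensure all constants depend only on $m, {\mathcal J}_{0}, w_{0}, \ell_{0}, r_{0}, \gamma, \rho_{w_{0}}$ and the fixed geometry for $\varepsilon=1$ — and never on $\varepsilon$ itself — has already been built into Proposition \ref{ProRata} and the scaling laws \eqref{ScalingH}, \eqref{phi-scaling}–\eqref{phi-scaling2}, so the proof amounts to collecting these facts and invoking Gronwall's lemma.
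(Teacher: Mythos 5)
Your proposal is correct and follows essentially the same route as the paper, which simply gathers the estimates of the preceding subsections: the Gronwall closure of the $\rho^{\varepsilon}$ bound via Proposition \ref{ProRata} and Lemma \ref{support}, then the feedback into \eqref{EstNRJ} for $\ell^{\varepsilon}$ and $\varepsilon r^{\varepsilon}$, then \eqref{infVtilde} for $\tilde{v}^{\varepsilon}$, and finally Proposition \ref{propdefKwhole} with \eqref{ConsOmega} for $K_{\R^{2}}[\omega^{\varepsilon}]$. You correctly identify the one genuine subtlety, namely that the logarithmic dependence of $|\ell^{\varepsilon}|+|\varepsilon r^{\varepsilon}|$ on $\rho^{\varepsilon}$ is what keeps the Gronwall step from blowing up in finite time.
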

\subsection{Approximation of the velocity}
We will also use that 
\begin{equation} \label{DefCheckV}
\check{v}^{\eps}  := K_{\R^{2}}[\omega^{\eps}] + \sum_{i=1}^2 (\ell^{\eps} - K_{\R^{2}}[\omega^{\eps}]_{|x=0} )_{i} \nabla \Phi^{\eps}_{i} + r^{\eps} \nabla \Phi^{\eps}_{3},
\end{equation}
is a good approximation of  $\tilde{v}^{\eps}$ (which was introduced in \eqref{allo}).
More precisely we have the following estimate:
\begin{Proposition} \label{Pr}
As $\eps$ approaches $0^+$, we have:

\begin{equation} \label{EstCheckvTildev2}
\| \check{v}^{\eps} - \tilde{v}^{\eps} \|_{L^{\infty}(0,T;L^{2}(\partial {\mathcal S}_0^{\eps}))} +
\| \check{v}^{\eps} - \tilde{v}^{\eps} \|_{L^{\infty}(0,T ; H^{1/2}_{\varepsilon}({\mathcal F}_0^{\eps}))}=  o(\eps^{1/2}),
\end{equation}
where
\begin{equation} \nonumber
\| \cdot \|_{H^{1/2}_{\varepsilon}({\mathcal F}_0^{\eps})} := \| \cdot \|_{\dot{H}^{1/2}({\mathcal F}_0^{\eps})} + \varepsilon^{-1/2} \| \cdot \|_{L^{2}({\mathcal F}_0^{\eps})}.
\end{equation}
\end{Proposition}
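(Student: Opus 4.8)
The plan is to compare the two velocity fields $\check{v}^{\eps}$ and $\tilde{v}^{\eps}$ directly through their decompositions. Subtracting \eqref{allo2} from \eqref{DefCheckV} and recalling that $K_{H}^{\eps}[\omega^{\eps}] = K^{\eps}[\omega^{\eps}] + \alpha^{\eps} H^{\eps}$, I first isolate the difference. The potential-flow terms carrying $\ell^{\eps}$ and $r^{\eps}$ appear in both fields with the \emph{same} multipliers on $\nabla \Phi_i^{\eps}$ for $i=1,2$ only up to the correction $K_{\R^2}[\omega^{\eps}]_{|x=0}$, so that
\begin{equation*}
\check{v}^{\eps} - \tilde{v}^{\eps} = \Big( K_{\R^2}[\omega^{\eps}] - K_H^{\eps}[\omega^{\eps}] \Big) - \sum_{i=1}^{2} \big( K_{\R^2}[\omega^{\eps}]_{|x=0}\big)_i \nabla \Phi_i^{\eps}.
\end{equation*}
The first difference is the discrepancy between the full-plane Biot--Savart field and the exterior-domain hydrodynamic one; the remaining sum is a potential correction designed precisely to cancel the leading boundary behaviour. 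The whole point is that $K_{\R^2}[\omega^{\eps}]$ is, up to the removable point-value correction, an excellent approximation of $K^{\eps}_H[\omega^{\eps}]$ on the shrinking boundary, with an error that beats $\eps^{1/2}$ in the scaled norms.

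Next I would estimate each norm on the right-hand side of \eqref{EstCheckvTildev2} using the geometry. Since $\partial {\mathcal S}_0^{\eps}$ has length $\mathcal{O}(\eps)$, the $L^2(\partial {\mathcal S}_0^{\eps})$ norm of a field bounded by $M$ in $L^\infty$ is $\mathcal{O}(\eps^{1/2} M)$; similarly the scaled norm $\|\cdot\|_{H_\eps^{1/2}}$ is tuned so that potential fields of size $\mathcal{O}(\eps)$ near the boundary contribute at order $\eps^{1/2}$. Thus the target $o(\eps^{1/2})$ forces me to show that the \emph{pointwise} difference $K_{\R^2}[\omega^{\eps}] - K_H^{\eps}[\omega^{\eps}]$, evaluated near $\partial {\mathcal S}_0^{\eps}$, is $o(1)$ uniformly in $t$, and more precisely exhibits the right decay as $\eps \to 0$. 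For the correction term, \eqref{ComportementPhii} and the scaling \eqref{phi-scaling} give $\|\nabla \Phi_i^{\eps}\|_{L^\infty(\partial{\mathcal S}_0^{\eps})} = \mathcal{O}(1)$ while $\big(K_{\R^2}[\omega^{\eps}]_{|x=0}\big)_i$ is a fixed bounded scalar; the Taylor expansion of $K_{\R^2}[\omega^{\eps}]$ about the origin — justified by the uniform $C^{1-2/p}$ bound from Proposition \ref{Prop:APE} — shows $K_{\R^2}[\omega^{\eps}](x) = K_{\R^2}[\omega^{\eps}]_{|x=0} + \mathcal{O}(|x|^{1-2/p})$, and $|x| = \mathcal{O}(\eps)$ on the boundary, which is how the $\eps^{1-2/p}$ gain enters.

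The crux is the comparison of the two Biot--Savart kernels on the solid frame. Using \eqref{Gepsilon}, \eqref{GepsHydro} and the explicit kernel formula for $K_H^{\eps}$ in terms of ${\mathcal T}_{\eps}$, I would split the hydrodynamic kernel into its principal singular part, which reconstructs exactly $H(x-y)$ after the change of variables $X=\eps{\mathcal T}(x/\eps)$, and the image-charge term involving ${\mathcal T}_{\eps}(y)^{*}$, which is the genuine effect of the obstacle. On the shrunk boundary the image term and the conformal distortion $D{\mathcal T}_{\eps}^{T} - \Id$ are both controlled by \eqref{Eq:DevtT} (the development ${\mathcal T}(z) = \beta z + \tilde\beta + \mathcal{O}(1/z)$ gives $D{\mathcal T}(z) = \beta\,\Id + \mathcal{O}(1/z^2)$, so after rescaling the distortion is $\mathcal{O}(\eps^2/|x|^2)$ away from the boundary), while for the difference $K_{\R^2}-K^{\eps}$ the cutoff of $\omega^{\eps}$ against the solid contributes only at distances $\mathcal{O}(\eps)$ from the origin. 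I expect the main obstacle to be organizing these estimates uniformly \emph{in the scaled $\dot{H}^{1/2}$ seminorm} rather than merely in $L^\infty$: the fractional seminorm requires a double-integral Gagliardo-type bound, and one must carefully exploit both the $\mathcal{O}(\eps)$ boundary length and the $\eps^{1-2/p}$ Hölder gain to land strictly below $\eps^{1/2}$, producing genuine $o(\eps^{1/2})$ rather than $\mathcal{O}(\eps^{1/2})$. This is where the hypothesis $p>2$, hence $1-2/p>0$, is essential.
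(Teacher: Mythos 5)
Your algebraic identity
\begin{equation*}
\check{v}^{\eps} - \tilde{v}^{\eps} = \bigl( K_{\R^2}[\omega^{\eps}] - K_H^{\eps}[\omega^{\eps}] \bigr) - \sum_{i=1}^{2} \bigl( K_{\R^2}[\omega^{\eps}]_{|x=0}\bigr)_i \nabla \Phi_i^{\eps}
\end{equation*}
is correct, but the strategy of estimating the two pieces separately cannot reach $o(\eps^{1/2})$, because neither piece is small on the shrunk boundary. The claim you set yourself --- that $K_{\R^2}[\omega^{\eps}] - K_H^{\eps}[\omega^{\eps}]$ is $o(1)$ near $\partial{\mathcal S}_0^{\eps}$ --- is false. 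Take ${\mathcal S}_0$ the unit disk and $\omega^{\eps}$ concentrated near a point $y_0$ with $|y_0|=1$: writing $\xi=x/\eps$, the image-vortex and harmonic-field terms combine on $\partial{\mathcal S}_0^{\eps}$ to give $K_{\R^2}[\omega^{\eps}](x) - K_H^{\eps}[\omega^{\eps}](x) = \frac{1}{2\pi}\bigl(2(\xi\cdot y_0)\xi - y_0\bigr)^{\perp} + {\mathcal O}(\eps)$, an order-one quantity; the Kirchhoff correction $-\sum_i (K_{\R^2}[\omega^{\eps}]_{|x=0})_i\nabla\Phi_i^{\eps}$ is exactly the opposite order-one quantity (one checks the two leading terms cancel identically on the circle). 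Only the \emph{sum} is small; separate ${\mathcal O}(1)$ bounds combined with $|\partial{\mathcal S}_0^{\eps}|={\mathcal O}(\eps)$ give at best ${\mathcal O}(\eps^{1/2})$ in $L^2(\partial{\mathcal S}_0^{\eps})$, which misses the required $o(\eps^{1/2})$. You do remark that the potential correction is ``designed precisely to cancel the leading boundary behaviour'', but your subsequent plan never exploits that cancellation.

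The mechanism that actually produces both the cancellation and the little-$o$ is different from the kernel comparison you outline. The field $\check{v}^{\eps}-\tilde{v}^{\eps}$ is divergence-free, curl-free and has zero circulation around $\partial{\mathcal S}_0^{\eps}$, hence equals $\nabla\Psi^{\eps}$ with $\Psi^{\eps}$ harmonic in ${\mathcal F}_0^{\eps}$, decaying at infinity, and with Neumann data $\partial_n\Psi^{\eps} = g^{\eps} := \bigl(K_{\R^2}[\omega^{\eps}] - K_{\R^2}[\omega^{\eps}]_{|x=0}\bigr)\cdot n$ on $\partial{\mathcal S}_0^{\eps}$ (using that $K_H^{\eps}[\omega^{\eps}]\cdot n = 0$ and $\partial_n\Phi_i^{\eps}=K_i$): all the order-one contributions drop out of the \emph{normal trace}. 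The uniform $C^{1-2/p}$ (or log-Lipschitz) bound of Proposition \ref{Prop:APE} --- the H\"older gain you correctly identified, but which must be applied to $g^{\eps}$ rather than to the Taylor error in the coefficient of $\nabla\Phi_i^{\eps}$ --- yields $\|g^{\eps}\|_{L^2(\partial{\mathcal S}_0^{\eps})} = {\mathcal O}(\eps^{1/2}\cdot\eps^{1-2/p}) = o(\eps^{1/2})$. The step missing from your plan is then the elliptic estimate for the exterior Neumann problem on the unit-scale domain (Lemma \ref{LemmeElliptiqueDeBase}, $\|\Psi\|_{H^{3/2}({\mathcal F}_0)}\leq C\|g\|_{L^2(\partial{\mathcal S}_0)}$) together with the scaling $\Psi^{\eps}(x)=\eps\Psi(x/\eps)$, which converts smallness of the normal trace alone into smallness of the full field, tangential trace included, in $L^2(\partial{\mathcal S}_0^{\eps})$ and $H^{1/2}_{\eps}({\mathcal F}_0^{\eps})$. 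This replaces entirely the direct Gagliardo-seminorm kernel estimate you anticipate as the main obstacle, and without it the tangential component of the difference is not controlled.
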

\begin{proof}
One checks that
\begin{equation} \label{CheckvTildev}
\left\{ \begin{array}{l} 
\curl( \check{v}^{\eps} - \tilde{v}^{\eps}) =0,  \quad   \text{for}  \ x\in  \mathcal{F}^{\eps}_{0} , \\ 
\div( \check{v}^{\eps} - \tilde{v}^{\eps}) =0,  \quad  \text{for}  \ x\in  \mathcal{F}^{\eps}_{0}  , \\
\int_{\partial {\mathcal S}_0^{\eps}} ( \check{v}^{\eps} - \tilde{v}^{\eps})\cdot \tau \, ds =0, \\
( \check{v}^{\eps} - \tilde{v}^{\eps}) \cdot n =  g^{\eps} , \quad   \text{for}  \ x\in \partial \mathcal{S}^{\eps}_0  , \\
\check{v}^{\eps} - \tilde{v}^{\eps} \rightarrow 0 \quad  \text{as}  \ x \rightarrow  \infty ,
\end{array} \right.
\end{equation}
with
\begin{equation} \label{DefGeps}
g^{\eps} := (K_{\R^{2}}[\omega^{\eps}] -K_{\R^{2}}[\omega^{\eps}]_{|x=0})\cdot n .
\end{equation} 
As a consequence there exists $\Psi^{\eps} $ such that 
\begin{equation} \label{TropConnecte}
\check{v}^{\eps} - \tilde{v}^{\eps} = \nabla \Psi^{\eps},
\end{equation}
and 
\begin{equation}
 \label{neuneu}
\left\{ \begin{array}{l} 
\Delta \Psi^{\eps} = 0 ,  \quad   \text{for}  \ x\in  \mathcal{F}^{\eps}_{0} , \\ 
\partial_n  \Psi^{\eps} = g^{\eps} ,  \quad  \text{for}  \ x\in \partial \mathcal{S}^{\eps}_0   , \\
 \Psi^{\eps} \rightarrow 0 \quad  \text{as}  \ x \rightarrow  \infty .
 \end{array} \right.
\end{equation}
We now use a dilatation argument and the following classical result (see for instance \cite{Kikuchi83}).
\begin{Lemma} \label{LemmeElliptiqueDeBase}
There exists $C>0$ such that for any $g$ in $L^2 ( \partial \mathcal{S}_0 )$ satisfying 
\begin{equation} \label{intgzero}
\int_{\partial {\mathcal S}_0} g(s) ds = 0,
\end{equation}
there is only one  solution $\Psi$ in $H^\frac{3}{2} ( \mathcal{F}_{0} )$ solution of 
\begin{equation}
 \label{neuneu0}
\left\{ \begin{array}{l} 
\Delta \Psi = 0 ,  \quad   \text{for}  \ x\in  \mathcal{F}_{0} , \\ 
\partial_n  \Psi = g ,  \quad  \text{for}  \ x\in \partial \mathcal{S}_0   , \\
 \Psi \rightarrow 0 \quad  \text{as}  \ x \rightarrow  \infty .
 \end{array} \right.
\end{equation}
given as the potential layer 
\begin{eqnarray*}
\Psi  (x) = - \int_{\partial {\mathcal S}_0} G_{ \mathcal{F}_{0}} (x,y) g(y) ds(y) , 
\end{eqnarray*}
where $G_{ \mathcal{F}_{0}}$ stands for the Green's function associated to the exterior domain $\mathcal{F}_{0}$ with Dirichlet boundary condition, and 
\begin{equation}
 \label{neuneu0esti}
 \|  \Psi   \|_{H^{3/2}({\mathcal F}_0)} \leq C \|  g  \|_{L^{2}( \partial  {\mathcal S}_0)} .
 \end{equation}
\end{Lemma}
Note that by a classical trace lemma, one has for some constant $C>0$:
\begin{equation*}
\|  \nabla \Psi \cdot \tau  \|_{L^{2}(\partial {\mathcal S}_0)} \leq C \|  \Psi   \|_{H^{3/2}({\mathcal F}_0)}.
\end{equation*}
Now we use the change of variables:
\begin{equation*}
\Psi^{\eps}  (x) = \eps \Psi (x/\eps) , \quad g^{\eps}  (x) = g (x/\eps) ,
\end{equation*}
with
\begin{equation*}
\| \nabla \Psi^{\varepsilon} \cdot \tau \|_{L^{2}(\partial {\mathcal S}^{\varepsilon}_{0})} = \sqrt{\varepsilon} \| \nabla \Psi \cdot \tau \|_{L^{2}(\partial {\mathcal S}_{0})}, \ \ 
\| \nabla \Psi^{\varepsilon} \|_{H_{\varepsilon}^{1/2}({\mathcal F}^{\varepsilon}_{0})} = \sqrt{\varepsilon} \| \nabla \Psi \|_{H_{1}^{1/2}({\mathcal F}_{0})} 
\ \text{ and } \ 
\| g^{\varepsilon} \|_{L^{2}(\partial{\mathcal S}^{\varepsilon}_{0})} = \sqrt{\varepsilon} \| g \|_{L^{2}(\partial {\mathcal S}_{0})}.
\end{equation*}
We apply Lemma \ref{LemmeElliptiqueDeBase} on $(\Psi,g)$ (we note that $g$ satisfies \eqref{intgzero} because $\div(K_{\R^{2}}[\omega^{\eps}] -K_{\R^{2}}[\omega^{\eps}]_{|x=0}) =0$), and infer that for $0< \eps \leq 1$, 
\begin{equation*}
\| (\check{v}^{\eps} - \tilde{v}^{\eps})\cdot \tau \|_{L^{\infty}(0,T;L^{2}(\partial {\mathcal S}_0^{\eps}))} +
\| \check{v}^{\eps} - \tilde{v}^{\eps} \|_{L^{\infty}(0,T;H_{\varepsilon}^{1/2}({\mathcal F}_0^{\eps}))} 
 \leq C \| g^{\varepsilon} \|_{L^{\infty}(0,T;L^{2}(\partial {\mathcal S}_0^{\eps}))}.
\end{equation*}
Finally we use the uniform H\"older estimate on $K_{\R^{2}}[\omega^{\eps}]$ given by Proposition \ref{Prop:APE} to deduce
\begin{eqnarray*}
\| (\check{v}^{\eps} - \tilde{v}^{\eps})\cdot n \|_{L^{\infty}(0,T;L^{2}(\partial {\mathcal S}_0^{\eps}))}
= \| g^{\varepsilon} \|_{L^{\infty}(0,T;L^{2}(\partial {\mathcal S}_0^{\eps}))} 
&\leq & C \| K_{\R^{2}}[\omega^{\eps}] -K_{\R^{2}}[\omega^{\eps}]_{|x=0} \|_{L^{\infty}(0,T;L^{2}(\partial {\mathcal S}_0^{\eps}))} \\
&=& o(\varepsilon^{1/2}) .
\end{eqnarray*}
This gives the desired conclusion and ends the proof of Proposition \ref{Pr}.
\end{proof} 
%
%
%
%
%
%
%
%
%
%
%
\section{Pressure force}
\label{PF}

The aim of this section is to study the pressure force/torque acting on the body:
\begin{eqnarray*}
F^\eps  (t) :=  \left( \int_{ \partial \mathcal{S}^\eps_0} q^\eps n \, ds, \
 \int_{ \partial \mathcal{S}^\eps_0} q^\eps x^{\perp} \cdot n \,ds \right) .
\end{eqnarray*}
To convert the previous boundary integrals into distributed integrals, we first use Green's formula and the functions $\Phi^\eps_{i}$ defined in \eqref{t1.3sec}-\eqref{t1.5sec} to write
\begin{eqnarray*}
F^\eps  (t) &=&  \left( \int_{\mathcal{F}^{\eps}_{0}}  \nabla q^\eps (x) \cdot  \nabla   \Phi^\eps_i dx\right)_{i=1,2,3} .
\end{eqnarray*}
That there is no contribution coming from infinity is justified by \eqref{Decroiq}. \par
Using  the following equality for two vector fields $a$ and $b$ in $\R^2$:
\begin{equation}
\label{vect}
\nabla(a\cdot b)=a\cdot \nabla b + b \cdot \nabla a - (a^\perp \curl b + b^\perp \curl a), 
\end{equation}
the equation (\ref{Euler11}) reads as follows
\begin{equation} \label{EqEulTrans}
\frac{\partial v^{\eps}}{\partial t}+
[ v^{\eps}-\ell^{\eps}-r^{\eps} x^\perp ]^\perp \omega^{\eps}
+ \nabla \frac{1}{2} (v^{\eps})^2 
- \nabla ( (\ell^{\eps} + r^{\eps} x^\perp)\cdot v^{\eps} )
+ \nabla q^{\eps} =0 .
\end{equation}
Plugging the decomposition \eqref{allo} into the previous equation, we find
\begin{equation} \label{voila}
\frac{\partial v^{\eps}}{\partial t}+ [ v^{\eps}-\ell^{\eps}-r^{\eps} x^\perp ]^\perp \omega^{\eps}
+ \nabla ({\mathcal Q}^{\eps}  + q^{\eps} )= 0 ,
\end{equation}
\begin{equation}
\label{voila2}
{\mathcal Q}^{\eps} :=  \frac{1}{2} |\tilde{v}^{\eps}|^2  + \gamma  (\tilde{v}^{\eps} 
- (\ell^{\eps} + r^{\eps} x^\perp))\cdot  H^{\eps} + \frac{1}{2} \gamma^2 |H^{\eps} |^2 
- (\ell^{\eps} + r^{\eps} x^\perp)\cdot \tilde{v}^{\eps}  .
\end{equation}
Using \eqref{voila}-\eqref{voila2} we get the following decomposition of $F^\eps$, for $i=1,2,3$:
\begin{eqnarray*}
- F^\eps_i  (t) &=& A_i^\eps +  B_i^\eps + C_i^\eps,
\end{eqnarray*}
where 
\begin{eqnarray*}
A_i^\eps &:=&  \int_{  \mathcal{F}^\eps_{0}  }  \partial_{t}  v^\eps  \cdot  \nabla   \Phi^\eps_i (x) \, dx , \\
B_i^\eps &:=& \int_{  \mathcal{F}^\eps_{0}  } \omega^{\eps}[ v^{\eps}-\ell^{\eps}-r^{\eps} x^\perp ]^\perp \cdot  \nabla   \Phi^\eps_i (x) \, dx , \\ 
C_i^\eps &:=& \int_{\partial  \mathcal{S}^\eps_{0}  } {\mathcal Q}^{\eps} K_i \, ds.
\end{eqnarray*}
For the last term, we used again Green's formula. We underline that there is no contribution from the infinity since each term in ${\mathcal Q}^{\eps}$ is (at least) bounded as $|x| \rightarrow +\infty$, while the normal derivative of $\Phi^{\varepsilon}_{i}$ over large circles satisfies $\partial_{n} \Phi^{\varepsilon}_{i} = {\mathcal O}(1/|x|^{2})$. \par
\ \par
In the rest of this section, we study the limit as $\eps$ goes to zero of all these terms.
\subsection{Treatment of the first term}
Let us examine $A_i^\eps$ for $i=1,2,3$. As $\div v^\eps =0$ for all time, using \eqref{t1.3sec}-\eqref{t1.4sec} we deduce that
\begin{eqnarray*}
A_i^\eps =  \int_{  \partial \mathcal{S}^\eps_{0}  } \partial_{t}  v^\eps  \cdot  n \, \Phi^\eps_i (x) \, ds .
\end{eqnarray*}
Now using the boundary condition \eqref{Euler13}, \eqref{EvoMatrice2sec} and Green's formula we obtain
\begin{eqnarray*}
(A_i^\eps )_{i=1,2,3} = \mathcal{M}^{\eps}_2 
\begin{pmatrix} \ell_{1}^{\eps} \\ \ell_{2}^{\eps} \\r^{\eps} \end{pmatrix} '(t)  .
\end{eqnarray*}
These terms will be put on the left hand side of the solid equations (see also \eqref{CalcPression2} in the Section concerning the Cauchy problem).
\subsection{Limit for the second term}
The second term will have no contribution in the limit, as the following proposition shows.
\begin{Proposition} \label{LimBi}
As $\varepsilon \rightarrow 0^{+}$, one has:
\begin{equation*}
B_{i}^{\varepsilon} \longrightarrow 0 \  \text{ for } i=1,2 \text{ and }
\frac{B_{3}^{\varepsilon}}{\varepsilon} \longrightarrow 0 \text{ for } i=3.
\end{equation*}
\end{Proposition}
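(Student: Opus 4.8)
The plan is to analyze the three integrals
\[
B_i^\eps = \int_{\mathcal{F}^\eps_{0}} \omega^\eps \, [v^\eps - \ell^\eps - r^\eps x^\perp]^\perp \cdot \nabla \Phi^\eps_i \, dx
\]
by exploiting the three structural facts at our disposal: the uniform a priori bounds of Proposition \ref{Prop:APE}, the pointwise decay of $\nabla \Phi^\eps_i$ away from the solid, and the fact that the vorticity $\omega^\eps$ is supported where $|x| \le \rho^\eps$ with $\rho^\eps$ bounded uniformly in $\eps$ on $[0,T]$.

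First I would substitute the velocity decomposition $v^\eps = \tilde v^\eps + \gamma H^\eps = K^\eps_H[\omega^\eps] + \gamma H^\eps + \ell^\eps_1 \nabla\Phi^\eps_1 + \ell^\eps_2 \nabla\Phi^\eps_2 + r^\eps \nabla\Phi^\eps_3$ into the bracket. The key observation is that $\nabla\Phi^\eps_i$ decays fast: from \eqref{ComportementPhii} and the scalings \eqref{phi-scaling}--\eqref{phi-scaling2} one has $\nabla\Phi^\eps_i(x) = \mathcal{O}(\eps/|x|^2)$ for $i=1,2$ and $\nabla\Phi^\eps_3(x) = \mathcal{O}(\eps^2/|x|^2)$, uniformly on the support of $\omega^\eps$ once we are at distance $\gtrsim 1$ from the origin. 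Since $\omega^\eps$ lives in the annulus between $\partial\mathcal{S}^\eps_0$ and $\{|x|\le\rho^\eps\}$ whose inner radius is $\sim\eps$, the danger region is near the boundary. Thus I would split the integral into a part over $\{|x|\ge \delta\}$ for a fixed small $\delta>0$, where $\nabla\Phi^\eps_i$ is uniformly $\mathcal{O}(\eps)$ (resp. $\mathcal{O}(\eps^2)$), and a part over $\{|x|\le\delta\}$, controlled using the $L^p$ bound on $\omega^\eps$ together with $L^{p'}$ bounds on $\nabla\Phi^\eps_i$ near the solid and the velocity factor.

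For the velocity factor $[v^\eps-\ell^\eps-r^\eps x^\perp]$, I would bound the pieces separately. The term $\gamma H^\eps$ contributes $\gamma H^\eps \cdot (\nabla\Phi^\eps_i)^\perp$-type products; here I must pair the $\mathcal{O}(1/|x|)$ singular behaviour of $H^\eps$ near the solid (from \eqref{ScalingH}) against the decay of $\nabla\Phi^\eps_i$ and the integrability of $\omega^\eps$. The term $\tilde v^\eps$ is controlled in $L^\infty$ uniformly by \eqref{infVtilde} and Proposition \ref{Prop:APE}, so its contribution is bounded by $\|\omega^\eps\|_{L^1}$ times the relevant power of $\eps$ coming from $\nabla\Phi^\eps_i$. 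The rigid part $\ell^\eps + r^\eps x^\perp$ is handled using that $|\ell^\eps|$ and $|\eps r^\eps|$ are bounded (and $x^\perp$ stays bounded by $\rho^\eps$ on the support), so again one gains the decay factor $\eps$ (resp. $\eps^2$). Collecting, each piece of $B_i^\eps$ carries a factor $\eps$ for $i=1,2$ and a factor $\eps^2$ for $i=3$; dividing the latter by $\eps$ leaves $\mathcal{O}(\eps)\to0$, which is exactly the claimed statement.

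The main obstacle I anticipate is the contribution near the solid boundary, where $H^\eps$ and $\nabla\Phi^\eps_i$ are both large (of size $\sim 1/\eps$ after rescaling) and a naive $L^\infty$ bound fails. The way through is a careful dyadic or rescaled estimate: change variables $x = \eps y$ to pass to the fixed domain $\mathcal{F}_0$, where $H^1$ and $\nabla\Phi^1_i$ are fixed functions with the decay \eqref{HalInfini}, \eqref{ComportementPhii}, and track how the scaling factors combine with the transported vorticity mass. One must verify that even the worst-case pairing produces a net positive power of $\eps$ — in particular that the quadratic-in-$H^\eps$ and the $H^\eps$-against-$\nabla\Phi^\eps_i$ cross terms do not accumulate a negative power. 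I expect Hölder's inequality with exponents $(p,p')$ applied to $\omega^\eps$ and the velocity-gradient product, combined with the uniform bound on $\rho^\eps$, to close the estimate cleanly.
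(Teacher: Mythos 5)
Your proposal follows essentially the same route as the paper: split $v^{\varepsilon}$ into $\tilde v^{\varepsilon} + \gamma H^{\varepsilon}$, combine the uniform bounds of Proposition \ref{Prop:APE} with the scalings of $\nabla \Phi^{\varepsilon}_{i}$ and $H^{\varepsilon}$, and close with H\"older's inequality against $\omega^{\varepsilon} \in L^{p}$ on its (uniformly bounded) support --- the extra near/far splitting in $|x|\lessgtr\delta$ is unnecessary, since a single H\"older estimate over $\mathcal{F}^{\varepsilon}_{0}\cap B(0,\rho^{\varepsilon})$ suffices. One caveat: the clean factors $\varepsilon$ (resp. $\varepsilon^{2}$) you announce are over-optimistic, because only $\varepsilon r^{\varepsilon}$ (not $r^{\varepsilon}$) is bounded and $\|\nabla\Phi^{\varepsilon}_{i}\|_{L^{\infty}}$ does not decay; the actual rate is $\varepsilon^{2/p'-1}$ (resp.\ $\varepsilon^{2/p'}$ for $i=3$), which is still a positive power since $p'<2$ --- precisely the verification you flag at the end.
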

\begin{proof}[Proof of Proposition \ref{LimBi}]
According to \eqref{allo}, we cut $B^{\varepsilon}_{i}$ in two parts: $B_{i}^{\varepsilon} = \hat{B}_{i}^{\varepsilon} + \check{B}_{i}^{\varepsilon}$ with
\begin{gather}
\label{CVB1}
\hat{B}_{i}^{\varepsilon} := \int_{  \mathcal{F}^\eps_{0}  } \omega^{\eps}[ \tilde{v}^{\eps}-\ell^{\eps}-r^{\eps} x^\perp ]^\perp \cdot  \nabla   \Phi^\eps_i (x) \, dx , \\
\label{CVB2}
\check{B}_{i}^{\varepsilon} := \int_{  \mathcal{F}^\eps_{0}  } \gamma \omega^{\eps}[H^{\varepsilon}]^\perp \cdot  \nabla   \Phi^\eps_i (x) \, dx .
\end{gather}
\noindent
The following estimates are uniform with respect to $t \in [0,T]$. \par
\ \par
\noindent
{\bf 1.} Let us begin with $i=1,2$. In this case we have
\begin{align*}
|\hat{B}_i^\eps| &= \left| \int_{ \mathcal{F}^\eps_{0} \cap B(0,\rho^{\varepsilon}) } [ \tilde{v}^{\eps}-\ell^{\eps}-r^{\eps} x^\perp ]^\perp \omega^{\eps}\cdot  \nabla   \Phi^\eps_i (x) \, dx \right| \\ 
&\leq \| \tilde{v}^{\eps}-\ell^{\eps}-r^{\eps} x^\perp \|_{L^{\infty}(\mathcal{F}^\eps_{0} \cap B(0,\rho^{\varepsilon}))} \| \omega^{\eps}\|_{L^{p}}  \| \nabla   \Phi^\eps_i \|_{L^{p'}(\mathcal{F}^\eps_{0} \cap B(0,\rho^{\varepsilon}))},
\end{align*}
when $p<+\infty$. In the case $p=+\infty$, we have in particular $w_{0} \in L^{3}_{c}({\mathcal F}_{0})$ and use the inequalities written here for $p=3$. 
Using \eqref{phi-scaling} and recalling \eqref{ComportementPhii}, we see that for $r>1$
\begin{equation}
\label{thenry}
\| \nabla   \Phi^\eps_i \|_{L^{r}(\mathcal{F}^\eps_{0} \cap B(0,\rho^{\varepsilon}))} 
= \varepsilon^{2/r} \| \nabla   \Phi^{1}_i \|_{L^{r}(\mathcal{F}_{0} \cap B(0,\rho^{\varepsilon}/\varepsilon))} 
\leq \varepsilon^{2/r} \| \nabla \Phi^{1}_i \|_{L^{r}(\mathcal{F}_{0})}.
\end{equation}
On the other side, due to \eqref{infVtilde} and to Proposition \ref{Prop:APE}, we have
\begin{equation*}
|\rho^{\varepsilon}| +  \| \tilde{v}^{\varepsilon} \|_{L^{\infty}(\mathcal{F}^\eps_{0} \cap B(0,\rho^{\varepsilon}))}  + | \ell^{\varepsilon} | + \varepsilon |r^{\varepsilon}| \leq C.
\end{equation*}
It follows that
\begin{equation*}
\hat{B}_i^\eps = {\mathcal O} \left(\varepsilon^{\frac{2}{p'} -1}\right) \rightarrow  0 \text{ as } \varepsilon \rightarrow 0^{+}.
\end{equation*}
Concerning $\check{B}_i^\eps$, we write
\begin{align*}
|\check{B}_i^\eps| &\leq |\gamma| \| H^{\eps}\|_{L^{q}(\mathcal{F}^\eps_{0} \cap B(0,\rho^{\varepsilon}))} \| \omega^{\eps}\|_{L^{p}}  \| \nabla   \Phi^\eps_i \|_{L^{r}(\mathcal{F}^\eps_{0} \cap B(0,\rho^{\varepsilon}))} ,
\end{align*}
with $ \frac{1}{q}=   \frac{1}{2} ( \frac{1}{p'} + \frac{1}{2} )$ and $ \frac{1}{r}=   \frac{1}{2} ( \frac{1}{p'} - \frac{1}{2} )$.
We use the fact that $\| H^{\eps}\|_{L^{q}(\mathcal{F}^\eps_{0} \cap B(0,\rho^{\varepsilon}))}$ is bounded independently of $\varepsilon$ (see \eqref{ScalingH} and \eqref{HalInfini} and observe that $q<2$) and once again \eqref{thenry}. Hence we also have
\begin{equation*}
\check{B}_i^\eps \longrightarrow  0 \text{ as } \varepsilon \rightarrow 0^{+}.
\end{equation*}
{\bf 2.} Let us now turn to the case $i=3$. In that case, the scaling of $\nabla \Phi_{3}^{\varepsilon}$ is not the same. But using \eqref{phi-scaling2}, we see that the situation is actually better, in the sense that using the same estimates as before, we get an additional power of $\varepsilon$. Then \eqref{CVB2} follows.
\end{proof}
\subsection{Limit for the third term}
We decompose $C_{i}^\eps $ into 
\begin{eqnarray}
\label{Cia}
C_{i,a}^\eps &=& \frac{1}{2} \int_{\partial  \mathcal{S}^\eps_{0}  } |\tilde{v}^{\eps}|^2  K_i \, ds, \\ 
\label{Cib}
C_{i,b}^\eps &=& \gamma  \int_{\partial  \mathcal{S}^\eps_{0}  } (\tilde{v}^{\eps} - (\ell^{\eps} + r^{\eps} x^\perp))\cdot  H^{\eps}  K_i \, ds, \\
\label{Cic}
C_{i,c}^\eps &=& \frac{\gamma^2}{2}   \int_{\partial  \mathcal{S}^\eps_{0} } |H^{\eps}|^2  K_i \, ds, \\
\label{Cid}
C_{i,d}^\eps &=&  - \int_{\partial  \mathcal{S}^\eps_{0}  } (\ell^{\eps} + r^{\eps} x^\perp)\cdot \tilde{v}^{\eps}  K_i \, ds.
\end{eqnarray}
{\bf 1.} We first tackle the terms $C_{i,a}^\eps$ and $C_{i,d}^\eps$ which are the easiest ones. One easily sees that for $i=1,2$:
\begin{eqnarray*}
| C_{i,a}^\eps | +|  C_{i,d}^\eps| \leqslant C \varepsilon (\| \tilde{v}^{\varepsilon} \|_{L^{\infty}}^{2} + |\ell^{\varepsilon}|^{2} + |\varepsilon r^{\varepsilon}|^{2} ),
\end{eqnarray*}
and that for $i=3$:
\begin{eqnarray*}
| C_{3,a}^\eps | +|  C_{3,d}^\eps| \leqslant C \varepsilon^{2} (\| \tilde{v}^{\varepsilon} \|_{L^{\infty}}^{2} + |\ell^{\varepsilon}|^{2} + |\varepsilon r^{\varepsilon}|^{2} ).
\end{eqnarray*}
We conclude with Proposition \ref{Prop:APE} that these terms tend to zero as $\varepsilon \rightarrow 0^{+}$, as ${\mathcal O}(\varepsilon^{2})$ when $i=3$. \par
\ \par
\noindent
{\bf 2.} We turn to $C_{i,c}^{\varepsilon}$. We will make use of the following classical Blasius' lemma (see for instance \cite{MP} and \cite[Problem 4.3]{Childress}), which we prove in the appendix for the sake of self-containedness. 
\begin{Lemma}
\label{blasius}
Let  $\mathcal{C}$ be a smooth Jordan curve, $f:=(f_1 , f_2)$ and $g:=(g_1 ,g_2 )$ two smooth tangent vector fields on $\mathcal{C}$. 
Then 
\begin{eqnarray} 
\label{bla1}
\int_{ \mathcal{C}} (f  \cdot g) n \, ds =  i \left( \int_{ \mathcal{C}} (f_1 - if_2) (g_1 - i g_2) \, dz \right)^* , \\
\label{bla2}
\int_{ \mathcal{C}} (f  \cdot g) (x^{\perp} \cdot n)  \,ds =  \Re \left( \int_{ \mathcal{C}} z (f_1 - if_2) (g_1 - i g_2) \, dz \right).
\end{eqnarray}
where $(\cdot )^*$ denotes the complex conjugation.
\end{Lemma}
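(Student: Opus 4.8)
The plan is to reduce both identities to a single pointwise algebraic identity on $\mathcal{C}$, exploiting the tangency of $f$ and $g$; no contour-integration theorem (Cauchy's theorem, residues) is needed, since $f$ and $g$ are merely tangent fields and not traces of holomorphic functions. First I would fix an arc-length parametrization $s \mapsto z(s) = x_1(s) + ix_2(s)$ of $\mathcal{C}$, oriented so that the unit tangent $\tau$ and the unit normal $n$ of the statement correspond, under the complex identification $(x_1,x_2) = x_1 + ix_2$, to $\tau = z'(s)$ and $n = iz'(s)$ (with $|z'| = 1$). This is consistent with the paper's convention that $n$ points out of the fluid, i.e. into the solid, together with the direct orientation of the boundary. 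With these choices $dz = z'(s)\,ds$ and $n\,ds = iz'(s)\,ds$.

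The key observation is that tangency forces $f$ and $g$ to be real multiples of $\tau$: writing $a := f\cdot\tau$ and $b := g\cdot\tau$, one has $f = a\tau$ and $g = b\tau$ pointwise on $\mathcal{C}$, hence in complex notation $f_1 - if_2 = a\,\overline{z'}$ and $g_1 - ig_2 = b\,\overline{z'}$, while $f\cdot g = ab$. Multiplying and using $\overline{z'}^{\,2}\, z' = \overline{z'}\,|z'|^{2} = \overline{z'}$ yields the single identity
\begin{equation*}
(f_1 - if_2)(g_1 - ig_2)\,dz = ab\,\overline{z'}^{\,2}\, z'\,ds = (f\cdot g)\,\overline{z'}\,ds,
\end{equation*}
valid pointwise along $\mathcal{C}$. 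This identity is the heart of the proof; everything else is bookkeeping.

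For \eqref{bla1} I would take the complex conjugate of this identity — giving $(f\cdot g)\,z'\,ds$, since $f\cdot g$ and $ds$ are real — then integrate over $\mathcal{C}$ and multiply by $i$, so that $i\,z'\,ds = n\,ds$ reproduces $\int_{\mathcal{C}}(f\cdot g)\,n\,ds$. For \eqref{bla2} I would instead multiply the identity by $z$ before integrating and take the real part, using $\Re(z\,\overline{z'}) = x\cdot\tau = x^{\perp}\cdot n$, the last equality being a direct computation from $n = i z'$, i.e. $n = \tau^{\perp}$. I expect the only genuine difficulty to be the orientation and sign bookkeeping: one must check that the chosen orientation of $\mathcal{C}$, together with the convention that $n$ points out of the fluid, indeed gives $n = iz'$ rather than $n = -iz'$, since the opposite choice would flip the sign in \eqref{bla1}. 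Once the convention $n = iz'$ is fixed consistently, both formulas follow at once from the displayed identity.
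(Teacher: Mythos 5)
Your proof is correct and follows essentially the same route as the paper: an elementary computation via a parametrization of $\mathcal{C}$, using tangency to turn the complex product $(f_1-if_2)(g_1-ig_2)\,dz$ into a real multiple of $\overline{z'}\,ds$ (the paper phrases this by polarizing to $f=g$ and observing that $(f_1-if_2)(\gamma_1'+i\gamma_2')$ is real, whereas you write $f=a\tau$, $g=b\tau$ directly, which avoids the polarization step). The orientation convention $n\,ds = i\,z'\,ds$ that you flag is indeed the one implicitly used in the paper, so the signs match.
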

\noindent
We now apply Lemma \ref{blasius} and use \eqref{HSeriesLaurent} and Cauchy's Residue Theorem. We deduce directly that $C^{\varepsilon}_{1,c}=C^{\varepsilon}_{2,c}=C^{\varepsilon}_{3,c}=0$ . \par
\ \par
\noindent
{\bf 3.} Let us finally turn to the main term, that is $C^{\varepsilon}_{i,b}$. Let us prove the following.
\begin{Proposition} \label{PropCib}
One has for $i=1,2$
\begin{equation} \label{Cib12}
C_{i,b}^{\varepsilon} = \gamma ( K_{\R^2}[\omega^\eps] (t,0) -  \ell^{\eps} )^\perp +\varepsilon r^{\varepsilon} \gamma \xi + o(1),
\end{equation}
and 
\begin{equation} \label{Cib3}
C_{3,b}^{\varepsilon} = \gamma \varepsilon \, \zeta \cdot ( K_{\R^2}[\omega^\eps] (t,0) -  \ell^{\eps} )  + o(\varepsilon),
\end{equation}
where $\xi$ and $\zeta$ are defined in $\R^{2}\simeq \C$ by
\begin{gather*}
\xi: = \left(\int_{\partial  \mathcal{S}_{0}  } \overline{z} (H^1_{1} -i H^1_{2}) \, dz \right)^{*}, \\
\zeta: = \int_{ \partial  \mathcal{S}_{0}  } (H^1_{1} -i H^1_{2})  z \, dz.
\end{gather*}
\end{Proposition}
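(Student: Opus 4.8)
The plan is to apply Blasius' Lemma \ref{blasius} to each term $C_{i,b}^{\varepsilon}$, exploiting the fact that the relevant vector fields are tangent to $\partial{\mathcal S}_0^{\varepsilon}$ and holomorphic/meromorphic in the body frame, and then to extract the leading-order behavior as $\varepsilon\to 0^+$ using the rescaling ${\mathcal T}_{\varepsilon}(z)={\mathcal T}(z/\varepsilon)$ together with the Laurent expansions \eqref{HSeriesLaurent} of $H^{\varepsilon}$. The term $C_{i,b}^{\varepsilon}$ involves the bilinear pairing $(\tilde v^{\varepsilon}-(\ell^{\varepsilon}+r^{\varepsilon}x^{\perp}))\cdot H^{\varepsilon}$. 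I would first split this pairing along the decomposition of $\tilde v^{\varepsilon}$: the key observation is that on the boundary, the dominant contribution of $\tilde v^{\varepsilon}$ as $\varepsilon\to 0$ should be the constant value $K_{\R^2}[\omega^{\varepsilon}](t,0)$, since the solid shrinks to the point $0$ and the smooth part of the velocity becomes essentially constant there.

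The main computational steps I would carry out are as follows. First, using \eqref{allo2} and the approximation Proposition \ref{Pr}, replace $\tilde v^{\varepsilon}$ on $\partial{\mathcal S}_0^{\varepsilon}$ by $\check v^{\varepsilon}$ up to an error that is $o(\varepsilon^{1/2})$ in $L^2(\partial{\mathcal S}_0^{\varepsilon})$; combined with $\|H^{\varepsilon}\|_{L^2(\partial{\mathcal S}_0^{\varepsilon})}=O(1)$ (from \eqref{ScalingH} and a boundary rescaling giving a factor $\varepsilon^{1/2}$ against $\varepsilon^{-1}$), this error contributes $o(1)$ to $C_{i,b}^{\varepsilon}$. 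Next, on $\partial{\mathcal S}_0^{\varepsilon}$ the field $\check v^{\varepsilon}$ is, to leading order, the constant $K_{\R^2}[\omega^{\varepsilon}](t,0)$ plus the Kirchhoff correction terms $\sum_i(\ell^{\varepsilon}-K_{\R^2}[\omega^{\varepsilon}]_{|0})_i\nabla\Phi_i^{\varepsilon}+r^{\varepsilon}\nabla\Phi_3^{\varepsilon}$. I would then apply Blasius \eqref{bla1} (for $i=1,2$) and \eqref{bla2} (for $i=3$) to the integral $\int_{\partial{\mathcal S}_0^{\varepsilon}}(a\cdot H^{\varepsilon})K_i\,ds$ for each constituent vector field $a$, converting each into a contour integral of the product of the corresponding holomorphic functions, and evaluate by residues using \eqref{HSeriesLaurent}. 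The constant-velocity contribution, paired against $H^{\varepsilon}$ whose complex form is $\tfrac{1}{2i\pi z}+O(1/z^2)$, produces precisely the Kutta–Joukowski term $\gamma(K_{\R^2}[\omega^{\varepsilon}](t,0)-\ell^{\varepsilon})^{\perp}$ via Cauchy's residue theorem; the translational Kirchhoff potentials $\nabla\Phi_{1,2}^{\varepsilon}$ yield no net residue at leading order, while the rotational term $r^{\varepsilon}\nabla\Phi_3^{\varepsilon}$ produces the $\varepsilon r^{\varepsilon}\gamma\xi$ correction, with $\xi$ arising as the appropriate contour integral in the rescaled variable (Remark \ref{casdisk2} relating $\Phi_{1,2}$ to $(H^1)^{\perp}$ is the structural reason the translational parts reorganize into the clean $(\cdot)^{\perp}$ form). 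For $i=3$, the extra factor $z$ in \eqref{bla2} together with the scaling $\Phi_3^{\varepsilon}=\varepsilon^2\Phi_3^1(\cdot/\varepsilon)$ accounts for the overall factor $\varepsilon$ and produces $\zeta$.

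The bookkeeping of powers of $\varepsilon$ is the crux: I would rescale every boundary integral by $z\mapsto z/\varepsilon$ so that all contours become $\partial{\mathcal S}_0$ (independent of $\varepsilon$), tracking that $ds$ contributes $\varepsilon$, that $H^{\varepsilon}=\varepsilon^{-1}H^1(\cdot/\varepsilon)$ contributes $\varepsilon^{-1}$, and that the $x^{\perp}$ weight in $K_3$ contributes an extra $\varepsilon$; matching these against the Kirchhoff scalings \eqref{phi-scaling}–\eqref{phi-scaling2} is what distinguishes the $O(1)$ leading term, the $O(\varepsilon)$ corrections $\xi,\zeta$, and the genuinely negligible remainders. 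The hard part will be making rigorous the replacement of $\tilde v^{\varepsilon}$ by the single constant $K_{\R^2}[\omega^{\varepsilon}](t,0)$ on the boundary: I expect this to require the uniform Hölder (resp. log-Lipschitz) bound on $K_{\R^2}[\omega^{\varepsilon}]$ from Proposition \ref{Prop:APE} to control $K_{\R^2}[\omega^{\varepsilon}](x)-K_{\R^2}[\omega^{\varepsilon}](0)=O(|x|^{1-2/p})=O(\varepsilon^{1-2/p})$ on $\partial{\mathcal S}_0^{\varepsilon}$, which, weighed against $\|H^{\varepsilon}\|_{L^2(\partial{\mathcal S}_0^{\varepsilon})}$ and the measure of the boundary, yields the claimed $o(1)$ (resp. $o(\varepsilon)$) error. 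The residue computations themselves are routine once the expansions are in place, but care is needed to ensure that the non-leading Laurent coefficients of $H^{\varepsilon}$ do not contaminate the $O(1)$ and $O(\varepsilon)$ orders.
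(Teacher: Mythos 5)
Your strategy is essentially the paper's proof: it too introduces the approximation $\overline v^{\eps}=K_{\R^2}[\omega^{\eps}](t,0)+\sum_{i=1,2}(\ell^{\eps}-K_{\R^2}[\omega^{\eps}](t,0))_i\nabla\Phi^{\eps}_i+r^{\eps}\nabla\Phi^{\eps}_3$, bounds $\tilde v^{\eps}-\overline v^{\eps}$ on $\partial{\mathcal S}^{\eps}_0$ via Proposition \ref{Pr} plus the uniform H\"older estimate (note $\|H^{\eps}\|_{L^2(\partial{\mathcal S}^{\eps}_0)}=O(\eps^{-1/2})$, not $O(1)$ as you wrote, though Cauchy--Schwarz still yields $o(1)$, resp.\ $o(\eps)$ for $i=3$), and then evaluates the main term by Blasius and residues after rescaling to $\partial{\mathcal S}_0$. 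Two execution details to respect: Lemma \ref{blasius} requires \emph{both} fields to be tangent, so it must be applied once to the full combination $\overline v^{\eps}-(\ell^{\eps}+r^{\eps}x^{\perp})$ (tangent because $\overline v^{\eps}\cdot n=\tilde v^{\eps}\cdot n$ on $\partial{\mathcal S}^{\eps}_0$) and only afterwards may the resulting contour integral be split term by term; and for $i=3$ one also needs Lemma \ref{Blabla} (that $\int_{\partial{\mathcal S}_0}\overline z\,(H^1_1-iH^1_2)\,z\,dz$ is real) to see why the rotational contribution leaves no trace in \eqref{Cib3}.
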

\begin{proof}[Proof of Proposition \ref{PropCib}]
We introduce 
\begin{eqnarray*}
\overline v^\eps (t,x) := K_{\R^2}[\omega^\eps] (t,0) 
+ \sum_{i=1}^2 (\ell^\eps_i (t)  - K_{\R^2}[\omega^\eps]_i (t,0)) \nabla \Phi_i^\eps (x)  + r^{\varepsilon} (t) \nabla \Phi_3^\eps  (x) \ \text{ in }\  \R^{+} \times{\mathcal F}_{0}^{\varepsilon},
\end{eqnarray*}
which will give a good approximation of $\check{v}^{\eps}$  on $\partial \mathcal{S}^\eps_{0}$ (compare to \eqref{DefCheckV}). 
Here $\omega^{\eps}$ is again extended by  $0$ inside ${\mathcal S}^{\varepsilon}_0$. 
Note in particular that one has
\begin{equation} \label{OvVauBord}
\overline v^\eps \cdot n = \tilde v^\eps \cdot n = (\ell^{\eps} + r^{\eps} x^\perp) \cdot n \ \text{ on } \partial \mathcal{S}^\eps_{0}. 
\end{equation}

The two steps in estimating $C^{\varepsilon}_{i,b}$ consists in computing the integral $C^{\varepsilon}_{i,b}$ in \eqref{Cib} when $\tilde{v}^{\eps}$ is replaced with $\overline v^\eps$, and then to show that the error of this replacement is small as $\varepsilon \rightarrow 0^{+}$. \par
\ \par
\noindent
{\bf a.} Denote
\begin{equation*}
\hat{C}_{i,b}^\eps = \gamma  \int_{\partial  \mathcal{S}^\eps_{0}  } (\overline{v}^{\eps} - (\ell^{\eps} + r^{\eps} x^\perp))\cdot  H^{\eps}  K_i \, ds,
\end{equation*}
and
\begin{equation} \label{Defuv}
\underline v^\eps(t,x) := \overline v^\eps(t,\varepsilon x) - (\ell^{\eps} + \varepsilon r^{\eps} x^\perp)  \ \text{ in }\  \R^{+} \times {\mathcal F}_{0}.
\end{equation}
By a direct scaling argument (see \eqref{ScalingH}), we deduce that
\begin{equation*}
\hat{C}_{i,b}^\eps = \hat{C}_{i,b} \ \text{ with }\ 
\hat{C}_{i,b} := \gamma  \int_{\partial  \mathcal{S}_{0}  } \underline{v}^{\varepsilon} \cdot  H^1 n_i \, ds \ \text{ for } i=1,2,
\end{equation*}
and
\begin{equation*}
\hat{C}_{3,b}^\eps = \varepsilon \hat{C}_{3,b} \ \text{ with }\ 
\hat{C}_{3,b} := \gamma  \int_{\partial  \mathcal{S}_{0}  } \underline{v}^{\varepsilon} \cdot  H^1 (x^{\perp}\cdot n) \, ds.
\end{equation*}
We remark that $\overline{v}^{\eps}-(\ell^{\eps} + r^{\eps} x^\perp)$ is a smooth vector field, tangent to $\partial {\mathcal S}^{\varepsilon}_{0}$, so that $\underline{v}^{\varepsilon}$ is tangent to $\partial {\mathcal S}_{0}$ . Hence, we get by Lemma \ref{blasius} that
\begin{eqnarray}
\label{blasius2}
(\hat{C}_{1,b}, \, \hat{C}_{2,b}) 
=  \gamma \int_{\partial  \mathcal{S}_{0}  } (\underline{v}^\eps \cdot  H^1)  n \, ds 
= i \gamma \left( \int_{ \partial  \mathcal{S}_{0}  }( \underline v^\eps_1  -i \underline v^\eps_2 )  (H^1_{1} -i H^1_{2}) \, dz \right)^*, \\
\label{blasius3}
\hat{C}_{3,b} 
= \gamma \int_{\partial  \mathcal{S}_{0}  } \underline v^\eps \cdot  H^1  (x^{\perp}\cdot n)  \, ds
=  \gamma \, \Re \left( \int_{ \partial  \mathcal{S}_{0}  }( \underline v^\eps_1  -i \underline v^\eps_2 )  (H^1_{1} -i H^1_{2})  z \, dz \right).
\end{eqnarray}
Let us denote
\begin{equation*}
\underline v^\eps_\infty :=  K_{\R^2}[\omega^\eps] (t,0) -  \ell^{\eps}.
\end{equation*}
For what concerns $ (H^{\eps}_{1} -i H^{\eps}_{2}) $ we have \eqref{HSeriesLaurent}. Concerning $\underline{v}^{\varepsilon}$, due to \eqref{ComportementPhii} and \eqref{Defuv}, and using
\begin{equation*}
(x^{\perp})_{1}-i (x^{\perp})_{2} = -i {(x_{1}+ix_{2})}^{*},
\end{equation*}
we have that
\begin{eqnarray*}
\underline v_{1}^\eps - i\underline v_{2}^\eps =  i \varepsilon r^{\eps} \overline z +\underline v^\eps_{\infty ,1} - i \underline v^\eps_{\infty ,2} + {\mathcal O}(1/ | z |^{2}).
\end{eqnarray*}
\ \par
\noindent
$\bullet$ We first study $(\hat{C}_{1,b}, \, \hat{C}_{2,b})$. 
Using Cauchy's residue theorem, we deduce that for $i=1,2$:
\begin{eqnarray}
\label{blasius4}
i\left(\int_{\partial  \mathcal{S}_{0}  } (\underline v_{1}^\eps - i\underline v_{2}^\eps -i\varepsilon r^{\eps} \overline z)  (H^1_{1} -i H^1_{2}) \, dz \right)^{*}
&=& i   ( \underline v^\eps_{\infty ,1} - i \underline v^\eps_{\infty ,2})^* \\
&=& {\underline{v}^\eps_\infty}^\perp .
\end{eqnarray}
With the definition of $\xi$ we deduce
\begin{equation} \label{Lrot}
(\hat{C}_{1,b}, \, \hat{C}_{2,b}) 
= \gamma {\underline{v}^\eps_\infty}^\perp +\varepsilon r^{\varepsilon} \gamma \xi.
\end{equation}
\noindent
$\bullet$ We now consider $\hat{C}_{3,b}$. We will use the following lemma, proved in the appendix.
\begin{Lemma} \label{Blabla}
\begin{equation*}
\Im \left( \int_{ \partial  \mathcal{S}_{0}  }\overline{z} (H^1_{1} -i H^1_{2})  z \, dz \right)=0.
\end{equation*}
\end{Lemma}
\noindent
It follows from this lemma that the term $r^{\eps} x^\perp$ does not intervene in  $\hat{C}_{3,b}$.
By Cauchy's residue theorem and using the definition of $\zeta$ we deduce that
\begin{equation*}
\Re \left(\int_{\partial  \mathcal{S}_{0}  } (\underline v_{1}^\eps - i\underline v_{2}^\eps +\varepsilon r^{\eps} \overline z)  (H^1_{1} -i H^1_{2}) z \, dz \right) = \zeta \cdot \underline v^\eps_\infty.
\end{equation*}

%
%
%
\ \par
\noindent
{\bf b.} Let us now establish that 
\begin{equation} \label{Pitipiti}
\int_{\partial  \mathcal{S}^\eps_{0}  } (\overline v^\eps - \tilde{v}^{\eps} ) \cdot H^{\eps} n_{i} \, ds = o( 1),
\end{equation}
and that
\begin{equation} \label{Pitipiti2}
\int_{\partial  \mathcal{S}^\eps_{0}  } (\overline v^\eps - \tilde{v}^{\eps} ) \cdot H^{\eps} (x^{\perp}\cdot n) \, ds = o(\eps).
\end{equation}
\noindent
On one side, it is straightforward using the  H\"older  estimate on $K_{\R^{2}}[\omega^{\eps}]$  given by Proposition \ref{Prop:APE}  to infer that
\begin{equation} \nonumber
| \check{v}^{\eps} - \overline{v}^{\eps}| = o( 1) \  \text{ uniformly on } (0,T) \times \partial {\mathcal S}_0^{\eps}.
\end{equation}
One the other side from Proposition \ref{Pr} we have that
$\| \check{v}^{\eps} - \tilde{v}^{\eps} \|_{L^{\infty}(0,T;L^{2}(\partial {\mathcal S}_0^{\eps}))}  = o(\eps^{1/2})$. 
Estimates \eqref{Pitipiti} and \eqref{Pitipiti2} follow by the Cauchy-Schwarz inequality.
\end{proof} 
\subsection{Conclusion}
Putting together all the results established in this section, we can state the following proposition.
\begin{Proposition} \label{PropositionPression}
The pressure force/torque can be written:
\begin{equation*}
\begin{pmatrix} F_{1}^\eps \\ F_{2}^\eps \\ F_{3}^\eps \end{pmatrix} 
= - {\mathcal M}^{\varepsilon}_{2} \begin{pmatrix} \ell^\eps \\ r^{\varepsilon} \end{pmatrix} ' 
+ \gamma \begin{pmatrix} (\ell^\eps - K_{\R^2}[\omega^\eps](t,0))^\perp -\varepsilon r^{\varepsilon} \xi \\
\varepsilon  \, \zeta \cdot (\ell^\eps - K_{\R^2}[\omega^\eps](t,0)) \end{pmatrix}
+ \begin{pmatrix} R_{1}^\eps \\ R_{2}^\eps \\ \varepsilon R_{3}^\eps \end{pmatrix} ,
\end{equation*}
with
\begin{equation*}
R_{i}^\eps \longrightarrow 0 \ \text{ in } L^{\infty}(0,T) \ \text{ as } \ \varepsilon \rightarrow 0^{+}.
\end{equation*}
%
%
\end{Proposition}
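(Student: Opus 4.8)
The plan is simply to assemble the asymptotic expressions for the three groups of terms $A_i^\eps$, $B_i^\eps$ and $C_i^\eps$ obtained in the three preceding subsections. I start from the decomposition $-F_i^\eps = A_i^\eps + B_i^\eps + C_i^\eps$ established at the beginning of the section, substitute what is known about each group, retain the leading contributions, and gather everything negligible into the remainders $R_i^\eps$.

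First I record the contribution of the first term, which is exact: the computation of the corresponding subsection yields $(A_i^\eps)_{i=1,2,3} = \mathcal{M}_2^\eps (\ell_1^\eps, \ell_2^\eps, r^\eps)'(t)$, and this produces the added-mass term $-\mathcal{M}_2^\eps(\ell^\eps, r^\eps)'$ once we pass from $-F^\eps$ to $F^\eps$. Next, by Proposition \ref{LimBi}, the second term satisfies $B_i^\eps \to 0$ for $i=1,2$ and $B_3^\eps/\eps \to 0$; hence $B_i^\eps$ will be absorbed into $R_i^\eps$ for $i=1,2$, and $B_3^\eps$ into $\eps R_3^\eps$.

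For the third term I use the splitting $C_i^\eps = C_{i,a}^\eps + C_{i,b}^\eps + C_{i,c}^\eps + C_{i,d}^\eps$ of \eqref{Cia}--\eqref{Cid}. By the estimates of that subsection together with Proposition \ref{Prop:APE}, $C_{i,a}^\eps$ and $C_{i,d}^\eps$ are $\mathcal{O}(\eps)$ for $i=1,2$ and $\mathcal{O}(\eps^2)$ for $i=3$; the terms $C_{i,c}^\eps$ vanish identically by Blasius' Lemma \ref{blasius} and Cauchy's residue theorem; and the main term $C_{i,b}^\eps$ is given by Proposition \ref{PropCib}, i.e. by \eqref{Cib12} for $i=1,2$ and \eqref{Cib3} for $i=3$. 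Inserting these into $F_i^\eps = -A_i^\eps - B_i^\eps - C_i^\eps$ and using the linearity of $v \mapsto v^\perp$ and of $\zeta \cdot (\cdot)$, the leading part $\gamma(K_{\R^2}[\omega^\eps](t,0) - \ell^\eps)^\perp$ of $-C_{i,b}^\eps$ becomes $\gamma(\ell^\eps - K_{\R^2}[\omega^\eps](t,0))^\perp$ for $i=1,2$, accompanied by $-\gamma \eps r^\eps \xi$, while for $i=3$ the leading part of $-C_{3,b}^\eps$ becomes $\gamma \eps\, \zeta \cdot (\ell^\eps - K_{\R^2}[\omega^\eps](t,0))$. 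These are precisely the stated middle vector; all the other pieces are collected into $R_1^\eps$, $R_2^\eps$ and, after factoring out the common $\eps$ in the torque component, into $\eps R_3^\eps$.

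Finally I would verify that $R_i^\eps \to 0$ in $L^\infty(0,T)$, not merely pointwise in $t$: this follows because every estimate used above is uniform on $[0,T]$ — the bounds of Proposition \ref{Prop:APE}, the uniformity in $t$ noted within the proof of Proposition \ref{LimBi}, and the $o(1)$ and $o(\eps)$ controls of Proposition \ref{PropCib}, themselves resting on the $L^\infty(0,T;\cdot)$ estimate of Proposition \ref{Pr}. There is no real analytic obstacle left at this stage, since the substance of the argument has already been carried out in the previous subsections; the only points demanding care are the sign bookkeeping induced by the rotation $(\cdot)^\perp$ and by the complex conjugations entering the definitions of $\xi$ and $\zeta$, and the normalization of the torque remainder so that factoring $\eps$ indeed yields $R_3^\eps = o(1)$.
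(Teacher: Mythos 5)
Your proposal is correct and coincides with the paper's argument: the proposition is obtained exactly by assembling the identity $-F_i^\eps = A_i^\eps + B_i^\eps + C_i^\eps$ with the exact computation of $A_i^\eps$, Proposition \ref{LimBi} for $B_i^\eps$, the estimates on $C_{i,a}^\eps$, $C_{i,d}^\eps$, the vanishing of $C_{i,c}^\eps$, and Proposition \ref{PropCib} for $C_{i,b}^\eps$, with all remainders uniform on $[0,T]$. Your sign bookkeeping for the $(\cdot)^\perp$ and $\zeta$ terms and the factoring of $\eps$ in the torque component are exactly right.
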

\section{Passage to the limit}
\label{Passage}
\subsection{Compactness}
{\bf 1.} {\it Compactness for the solid velocity.} We begin by obtaining compactness on the solid linear and angular velocities in the original frame. Using Proposition \ref{PropositionPression} and \eqref{Solide11}-\eqref{Solide12}, we obtain
\begin{equation*}
\mathcal{M}^{\eps} \begin{pmatrix} \ell^{\varepsilon} \\ r^{\varepsilon} \end{pmatrix}'
=  \gamma \begin{pmatrix} (\ell^\eps - K_{\R^2}[\omega^\eps](t,0))^\perp - \varepsilon r^{\varepsilon} \xi \\
\varepsilon  \, \zeta \cdot (\ell^\eps - K_{\R^2}[\omega^\eps](t,0)) \end{pmatrix}
+\begin{pmatrix} -mr^{\varepsilon} (\ell^{\varepsilon})^{\perp} \\
0 \end{pmatrix}
+ \begin{pmatrix} R_{1}^\eps \\ R_{2}^\eps \\ \varepsilon R_{3}^\eps \end{pmatrix} .
\end{equation*}
We multiply by ${\mathcal M}^{\varepsilon}_{1}(\mathcal{M}^{\eps})^{-1}$; using \eqref{EvoMatrice2sec}, Proposition \ref{Prop:APE} and \eqref{ConsOmega}, and simplifying by $\varepsilon$ the second equation we deduce that
\begin{gather}
\label{lprime}
m (\ell^{\varepsilon})' = \gamma (\ell^{\varepsilon} - K_{\R^2}[\omega^\eps](t,0))^{\perp} - (\varepsilon r^{\varepsilon}) \gamma \xi - m r^{\varepsilon} (\ell^{\varepsilon})^{\perp} + \tilde{R}_{1}^{\varepsilon} \\
\label{rprime}
{\mathcal J}_{0} (\varepsilon r^{\varepsilon})' = \gamma \zeta\cdot(\ell^{\varepsilon} - K_{\R^2}[\omega^\eps](t,0)) + \tilde{R}_{2}^{\varepsilon},
\end{gather}
with
\begin{equation} \label{CVRTilde}
\tilde{R}_{1}^{\varepsilon}, \tilde{R}_{2}^{\varepsilon} \longrightarrow 0 \ \text{ in } L^{\infty}(0,T) \ \text{ as } \ \varepsilon \rightarrow 0^{+}.
\end{equation}
Going back to the original velocity by using \eqref{chgtvar} and \eqref{chgtvar2} we deduce:
\begin{gather}\label{lprime2}
m (h^{\varepsilon})'' = \gamma ((h^{\varepsilon})' - K_{\R^{2}}[w^{\varepsilon}](t,h^{\varepsilon}))^{\perp} - \gamma (\varepsilon r^{\varepsilon}) Q^{\varepsilon}(t) \xi + Q^{\varepsilon}(t) \tilde{R}_{1}^{\varepsilon}, \\
\label{rprime2}
{\mathcal J}_{0} (\varepsilon r^{\varepsilon})' = \gamma \zeta\cdot Q^{\varepsilon}(t)^T ((h^{\varepsilon})' - K_{\R^2}[w^\eps](t,h^{\varepsilon})) + \tilde{R}_{2}^{\varepsilon},
\end{gather}
We used the fact that the Biot-Savart law in the plane \eqref{BSR2} commutes with translations and rotations. \par
Thanks to Proposition \ref{Prop:APE} we have that 
$K_{\R^2}[\omega^\eps](t,0)$ is bounded in $L^{\infty}(0,T)$ as $\varepsilon \rightarrow 0^{+}$.
Now since the right hand sides of \eqref{rprime} and \eqref{lprime2} are bounded in $L^{\infty}(0,T)$ (due to Proposition \ref{Prop:APE}), we infer that for some subsequence $(\varepsilon_{n})$, $\varepsilon_{n} \rightarrow 0^{+}$ of the parameter $\varepsilon$, we have
\begin{gather} \label{CVh}
h^{\varepsilon_{n}} \cvwstar h \ \text{ in } W^{2,\infty}(0,T), \\
\label{CVr}
\varepsilon_{n} r^{\varepsilon_{n}}\cvwstar R \ \text{ in } W^{1,\infty}(0,T).
\end{gather}
\ \\
{\bf 2.} {\it Compactness for the fluid velocity.} Let us now obtain some compactness for the fluid vorticity in the original frame, and for the velocity it generates via the Biot-Savart law. We obtain a convergence along a subsequence of $(\varepsilon_{n})$; to simplify the notations we will still call it $(\varepsilon_{n})$. \par
We extend ${w}^{\varepsilon} (t,\cdot)$ by $0$ inside ${\mathcal S}^{\varepsilon} (t)$.
Using the a priori estimate \eqref{ConsOmega}, we deduce that, up to a subsequence of $(\varepsilon_{n})$, one has, for some ${w} \in  L^{\infty}(0,T;  L^{p}(\R^2) )$:
\begin{equation} \label{CVTildeW}
{w}^{\varepsilon_{n}} \cvwstar {w} \ \text{weakly} \ \text{ in } L^{\infty}(0,T;  L^{p}(\R^2) ) \ \text{ as } n \rightarrow +\infty.
\end{equation}
Also, using \eqref{ConsOmega} and Proposition \ref{propdefKwhole}, we deduce that $K_{\R^{2}}[ w^{\eps}]$ is bounded in $L^{\infty}(0,T;W^{1,p}(\R^{2}))$ as $\varepsilon \rightarrow 0^{+}$ for $p<+\infty$ (resp. in $L^{\infty}(0,T;{\mathcal{LL}}(\R^{2}))$ if $p=+\infty$). We extend $\omega^{\varepsilon}$ by $0$ inside ${\mathcal S}^{\varepsilon} (t)$.
Then it is not difficult to check that from \eqref{Euler13} and \eqref{vorty1} that
\begin{equation*}
\partial_t  \omega^{\eps} +  \div ((v^{\eps} - \ell^{\varepsilon} -r^{\varepsilon} x^{\perp}) \omega^{\eps}) =0 \text{ in } {\mathcal D}'((0,T) \times \R^{2}).
\end{equation*}
Going back to the original variables we infer
\begin{equation} \label{vorty1Dprime}
\partial_t  w^{\eps} +  \div (u^{\eps} w^{\eps}) =0 \text{ in } {\mathcal D}'((0,T) \times \R^{2}).
\end{equation}
In particular, $\partial_t  w^{\eps}$ is bounded in $L^{\infty}(0,T;W^{-1,p}(\R^{2}))$.
Hence we deduce by \cite[Appendix C]{lions} that the convergence \eqref{CVTildeW} can be improved into
\begin{equation} \label{CVTildeW2}
{w}^{\varepsilon_{n}} \longrightarrow {w} \ \text{ in } C^{0}([0,T];L^{p}(\R^2)-w) \ \text{(resp. in } C^{0}([0,T];  L^{\infty}(\R^2)-w*) \text{ if } p=+\infty) \ \text{ as } n \rightarrow +\infty \ .
\end{equation}
Actually, \cite[Appendix C]{lions} considers only the case $p<+ \infty$ since it proves the compactness of a sequence in $C^{0}([0,T];X-w)$ for $X$ a reflexive separable Banach space. However, the generalization to $C^{0}([0,T];  L^{\infty}(\R^2)-w*)$ is straightforward using the separability of $L^{1}(\R^{2})$. \par
Now using Proposition \ref{propdefKwhole} and the Ascoli-Arzel\`a theorem, we see that $K_{\R^{2}}$ is a compact operator form $L^{p}(\R^{2})$ to $L^{\infty}_{loc}(\R^{2})$, so one deduces that
\begin{equation} \label{CVForteVitesse}
K_{\R^{2}} [{w}^{\varepsilon_{n}}] \longrightarrow K_{\R^{2}} [{w}] \ \text{ in } C^{0}([0,T]; L^{\infty}_{loc}(\R^{2}))  \ \text{ as } n \rightarrow +\infty.
\end{equation}
\subsection{Characterization of the limit of the fluid velocity}
{\bf 1.} {\it Convergence of $u ^{\varepsilon} $.} \par
\ \par
\noindent
$\bullet$ We extend  $u ^{\varepsilon} $ by $(h^{\varepsilon})' + r^{\varepsilon} (x - h^{\varepsilon})^{\perp} $ inside ${\mathcal S}^{\varepsilon} (t)$. %
We define $\tilde{u}^{\eps}$ by the relation
\begin{equation}  
\label{allo4}
\tilde{v}^{\eps} (t,x)=Q^{\eps} (t)^T\   \tilde{u}^\eps(t,Q^{\eps}(t)x+h^{\eps}(t)) ,
\end{equation}
so that 
\begin{equation}
\label{allo3}
\tilde{u}^{\eps}: = u^{\eps}- \gamma Q^{\varepsilon} H^{\varepsilon}((Q^{\varepsilon})^{T} (x-h^{\varepsilon}(t))) ,
\end{equation}
%
%
%
%
Using \eqref{phi-scaling} and \eqref{phi-scaling2} we deduce that
\begin{equation*}
\nabla \Phi_{i}^{\varepsilon} \longrightarrow 0 \ \text{ for } i=1,2 \text{ and }
\frac{1}{\varepsilon}\nabla \Phi_{3}^{\varepsilon} \longrightarrow 0 \ \text{ in } L^{2}(\R^{2}) \text{ as } \varepsilon \rightarrow 0^{+}.
\end{equation*}
Here we extended $\nabla \Phi_{i}^{\varepsilon}$ inside ${\mathcal S}_{0}^{\varepsilon}$ by the basis vector $e_i$ for $i=1$ or $2$, and by $x^\perp$ for $i=3$. Consequently, from Proposition \ref{Prop:APE} and \eqref{DefCheckV} we deduce that
\begin{equation} \label{VTVC}
\check{v}^{\varepsilon} - K_{\R^{2}}[\omega^{\varepsilon}] \longrightarrow 0 \ \text{ in } L^{\infty}(0,T;L^{2}(\R^{2})) \text{ as } \varepsilon \rightarrow 0^{+}.
\end{equation}
Gathering \eqref{EstCheckvTildev2} and \eqref{VTVC} we obtain that 
\begin{equation*} 
 K_{\R^{2}}[\omega^{\varepsilon}]  - \tilde{v}^{\eps} \longrightarrow 0 \ \text{ in } L^{\infty}(0,T;L^{2}(\R^{2})) \text{ as } \varepsilon \rightarrow 0^{+}.
\end{equation*}
Using \eqref{allo4}, \eqref{chgtvar2} and the fact that the Biot-Savart law in the plane commutes with translations and rotations, we infer
\begin{equation} \label{VTVC2}
K_{\R^{2}}[w^{\eps}]  - \tilde{u}^{\varepsilon} \longrightarrow 0 \ \text{ in } L^{\infty}(0,T;L^{2}(\R^{2})) \text{ as } \varepsilon \rightarrow 0^{+} .
\end{equation}
 \par
\ \par
\noindent
$\bullet$ Now we use the fact that, since $p'<2$, 
\begin{equation*}
H^{\varepsilon} \longrightarrow H \ \text{ in } L^{p'}_{loc}(\R^{2}) \  \text{ as } \varepsilon \rightarrow 0^{+},
\end{equation*}
where $H$ is given by \eqref{defH}, and where as usual we extend $H^{\varepsilon}$ by $0$ inside ${\mathcal F}^{\varepsilon}_{0}$, see for instance \cite[Lemma 3.11]{Cricri}. Since $H$ is invariant by rotation, it follows from an easy change of variable that 
\begin{equation} \label{CVH}
Q^{\varepsilon_{n}} H^{\varepsilon_{n}}((Q^{\varepsilon_{n}})^{T} (\cdot-h^{\varepsilon_{n}}(t))) \longrightarrow H(\cdot - h(t)) \ \text{ in } L^{p'}_{loc}(\R^{2}) \ \text{ as } n \rightarrow +\infty,
\end{equation}
no matter the rotation matrix $Q^{\varepsilon_{n}}$. \par
%
%
%
%
%
\ \par
\noindent
$\bullet$ Using \eqref{CVForteVitesse}, \eqref{VTVC2} and \eqref{CVH}, we finally deduce that
\begin{equation}
 \label{WS}
{u}^{\varepsilon_{n}}(x) \longrightarrow K_{\R^{2}} [{w}] +\gamma H(\cdot - h(t)) \ \text{ in } L^{\infty}(0,T;L^{p'}_{loc}(\R^{2})) \ \text{ as } n \rightarrow +\infty.
\end{equation}
\ \par
\noindent
{\bf 2.} {\it Fluid equation in the limit.}
\ \par
\noindent
Let us show that  $u$ and $w$  satisfy  \  \eqref{EqSolFaibleIntro}.
Since ${u}^{\varepsilon}$ and ${w}^{\varepsilon}$ satisfy \eqref{vorty1Dprime}, it can be easily seen that for any test function $\psi\in C^\infty_c([0,T)\times\R^2)$,
\begin{equation} \label{SolFEps}
\int_0^\infty\int_{\R^2} \psi_t  {w}^{\varepsilon_{n}} \, dx\, dt
+ \int_0^\infty \int_{\R^2} \nabla \psi \cdot {u}^{\varepsilon_{n}}  {w}^{\varepsilon_{n}} \, dx\, dt 
+ \int_{\R^2} \psi(0,x)  w_0(x) \, dx =0.
\end{equation}
 The convergence as $n \rightarrow +\infty$ of the first term of \eqref{SolFEps} is a direct consequence of \eqref{CVTildeW}. For what concerns the second one, it is a matter of weak/strong convergence since ${u}^{\varepsilon_{n}}$ converges strongly in $L^{\infty}(0,T;L^{p'}_{loc}(\R^{2}))$ (according to \eqref{CVH} and \eqref{WS}) while for ${w}^{\varepsilon}$ we have \eqref{CVTildeW}. 
\subsection{Characterization of the limit of the solid velocity}
We will use the following lemmata, proven in the appendix.
\begin{Lemma}\label{LemCVSolide}
Let $(\rho_{n})_{n \in \N} \in W^{1,\infty}(0,T)^{\N}$ and $(\varepsilon_{n})_{n \in \N} \in (\R_{*}^{+})^{\N}$ such that $(\varepsilon_{n} \rho_{n})$ is bounded in $L^{\infty}(0,T)$ and
\begin{equation} 
\label{EpsilonVers0}
\varepsilon_{n} \longrightarrow 0 \ \text{ as } \  n \rightarrow +\infty.
\end{equation}
Let 
\begin{equation} \label{DefAE}
\alpha_{n}(t):= \int_{0}^{t} \rho_{n}.
\end{equation}
Then
\begin{equation} \label{CV1}
\varepsilon_{n} \rho_{n} \exp(i \alpha_{n}) \stackrel{w*}{\longrightharpoonup} 0 \ \text{ in } \  L^{\infty}(0,T) \ \text{ as } \  n \rightarrow +\infty.
\end{equation}
\end{Lemma}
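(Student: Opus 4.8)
The plan is to exploit the fact that, by the definition \eqref{DefAE}, the phase $\alpha_n$ has derivative exactly $\rho_n$, so that the oscillating quantity $\rho_n \exp(i\alpha_n)$ is, up to a multiplicative constant, a total time derivative. Since $\rho_n \in W^{1,\infty}(0,T)$ is Lipschitz, the primitive $\alpha_n$ belongs to $C^1([0,T])$, and one has the identity
\begin{equation*}
\rho_n \exp(i \alpha_n) = -\,i\, \frac{d}{dt}\exp(i\alpha_n) \quad \text{on } [0,T].
\end{equation*}
Multiplying by $\varepsilon_n$ then exhibits $\varepsilon_n \rho_n\exp(i\alpha_n)$ as the time derivative of a quantity of size $\varepsilon_n$, which is the mechanism producing the cancellation.

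First I would record that the sequence is bounded in $L^{\infty}(0,T)$: since $\rho_n$ is real-valued, $\alpha_n$ is real and $|\exp(i\alpha_n)|=1$, while by hypothesis $(\varepsilon_n \rho_n)$ is bounded in $L^{\infty}(0,T)$; hence $\| \varepsilon_n \rho_n \exp(i\alpha_n)\|_{L^{\infty}(0,T)}$ is bounded uniformly in $n$. Consequently, to establish the weak-$*$ convergence \eqref{CV1} it suffices, by the density of $C_c^{\infty}(0,T)$ in $L^{1}(0,T)$ together with this uniform bound, to check that $\int_0^T \varepsilon_n \rho_n \exp(i\alpha_n)\, \phi\, dt \to 0$ for every fixed $\phi \in C_c^{\infty}(0,T)$.

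For such a smooth, compactly supported $\phi$, I would substitute the total-derivative identity above and integrate by parts, the boundary terms vanishing because $\phi$ has compact support in $(0,T)$:
\begin{equation*}
\int_0^T \varepsilon_n \rho_n \exp(i\alpha_n)\, \phi \, dt = i\, \varepsilon_n \int_0^T \exp(i\alpha_n)\, \phi'\, dt .
\end{equation*}
Since $|\exp(i\alpha_n)|=1$, the right-hand side is bounded in modulus by $\varepsilon_n \|\phi'\|_{L^{1}(0,T)}$, which tends to $0$ as $n \to +\infty$ by \eqref{EpsilonVers0}. This yields the convergence against each test function $\phi$, and combined with the uniform $L^{\infty}$ bound it gives \eqref{CV1}.

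The only genuine point is the structural observation that $\rho_n$ is precisely the derivative of the phase $\alpha_n$, which turns the oscillatory factor into an exact derivative; once this is seen the argument reduces to a one-line integration by parts and a routine density reduction, with no non-stationary phase estimate needed. The hypothesis that $(\varepsilon_n \rho_n)$ is bounded in $L^{\infty}$ is used only to pass from convergence against smooth test functions to the full weak-$*$ convergence, whereas the decay itself comes entirely from the prefactor $\varepsilon_n \to 0$ and the unimodularity of $\exp(i\alpha_n)$.
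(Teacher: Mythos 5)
Your proof is correct and follows essentially the same route as the paper's: both note the uniform $L^{\infty}$ bound to reduce weak-$*$ convergence to convergence in the sense of distributions, then write $\rho_{n}\exp(i\alpha_{n})$ as $-i\,\tfrac{d}{dt}\exp(i\alpha_{n})$ and integrate by parts against a test function, gaining the factor $\varepsilon_{n}$. Nothing is missing.
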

\begin{Lemma}\label{LemCVSolide2}
Let $(\rho_{n})_{n \in \N} \in W^{1,\infty}(0,T)^{\N}$ and $(\varepsilon_{n})_{n \in \N} \in (\R_{*}^{+})^{\N}$ satisfying \eqref{EpsilonVers0}, and let $\alpha_{n}$ be defined by \eqref{DefAE}. Let $(w_{n})_{n \in \N} \in L^{\infty}(0,T)^{\N}$ such that
\begin{equation} \label{cvwnw}
w_{n} \longrightarrow w \ \text{ in } \  L^{\infty}(0,T) \ \text{ as } \  n \rightarrow +\infty,
\end{equation}
and suppose that
\begin{equation}%
\label{CVEpsilonRho}
\varepsilon_{n} \rho_{n} \stackrel{w*}{\longrightharpoonup} \overline{\rho} \ \text{ in } \  W^{1,\infty}(0,T) \ \text{ as } \  n \rightarrow +\infty,
\end{equation}
and
\begin{equation} \label{condrhow}
\varepsilon_{n} \rho_{n}'(t) = \Re [w_{n}(t) \exp(- i \alpha_{n}(t))].
\end{equation}
Then $\overline{\rho}$ is constant on $[0,T]$.
\end{Lemma}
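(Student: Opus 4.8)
The plan is to avoid proving directly that $\varepsilon_n\rho_n'\cvwstar 0$ — this cannot be obtained by a naive oscillation argument, since $\exp(-i\alpha_n)$ need not converge weakly-$*$ to $0$ — and instead to work with the \emph{square} of $\varepsilon_n\rho_n$, which recasts the statement into one to which the previous lemma applies verbatim. Write $P_n:=\varepsilon_n\rho_n$. By \eqref{CVEpsilonRho} the sequence $(P_n)$ is bounded in $W^{1,\infty}(0,T)$, hence equi-Lipschitz and uniformly bounded; by Arzelà--Ascoli together with uniqueness of the limit, $P_n\to\overline\rho$ uniformly on $[0,T]$, and in particular $P_n^2\to\overline\rho^2$ uniformly. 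The objective will then be to show that $(\overline\rho^2)'=0$ in the sense of distributions, which forces $\overline\rho^2$, and then by continuity of $\overline\rho$ the function $\overline\rho$ itself, to be constant.

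First I would record the consequence of Lemma \ref{LemCVSolide}: applied to $(\rho_n)$, whose associated sequence $\varepsilon_n\rho_n=P_n$ is bounded in $L^\infty$, it gives $P_n\exp(i\alpha_n)\cvwstar 0$ in $L^\infty(0,T)$; taking complex conjugates and using that $P_n$ is real-valued, one obtains as well
\begin{equation*}
P_n\exp(-i\alpha_n)\cvwstar 0 \ \text{ in } L^\infty(0,T).
\end{equation*}
This is the only place where the rapid oscillation of the phase is used, and it is now entirely packaged in the earlier lemma.

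Next I would multiply the constraint \eqref{condrhow} by $P_n$. Since $P_n$ is real,
\begin{equation*}
\tfrac12\,(P_n^2)' = P_n P_n' = P_n\,\Re\!\left[w_n\exp(-i\alpha_n)\right] = \Re\!\left[w_n\,P_n\exp(-i\alpha_n)\right].
\end{equation*}
Testing against an arbitrary real $\phi\in C^\infty_c(0,T)$ and splitting $w_n=w+(w_n-w)$, the error term is bounded by $\|\phi\|_{L^1}\,\|w_n-w\|_{L^\infty}\,\|P_n\|_{L^\infty}\to 0$ by \eqref{cvwnw} and the uniform bound on $P_n$, while the main term equals $\Re\int_0^T (\phi w)\,P_n\exp(-i\alpha_n)\,dt$ and tends to $0$ because $\phi w\in L^1(0,T)$ while $P_n\exp(-i\alpha_n)\cvwstar 0$. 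Hence $\int_0^T\phi\,(P_n^2)'\,dt\to 0$ for every such $\phi$.

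Finally, using $P_n^2\to\overline\rho^2$ uniformly I would write $\int_0^T\phi\,(P_n^2)'=-\int_0^T\phi'\,P_n^2\to-\int_0^T\phi'\,\overline\rho^2$, so that the previous step yields $\int_0^T\phi'\,\overline\rho^2=0$ for all $\phi\in C^\infty_c(0,T)$, that is $(\overline\rho^2)'=0$. Thus $\overline\rho^2$ is constant, and since $\overline\rho$ is continuous it cannot pass between two opposite values without vanishing, so $\overline\rho$ is constant on $[0,T]$. The one genuinely delicate point is precisely the passage from the ``first power'' to the ``square'': multiplying by $P_n$ is what manufactures the factor $P_n\exp(-i\alpha_n)$ to which Lemma \ref{LemCVSolide} applies, and it is essential that the convergence $w_n\to w$ be \emph{strong} (not merely weak-$*$), so that the product $w_n\,P_n\exp(-i\alpha_n)$ can be treated by weak-$*$/strong duality rather than requiring weak-$*$ convergence to survive a multiplication, which it does not.
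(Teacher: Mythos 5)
Your proof is correct, but it takes a genuinely different route from the paper's. The paper's argument splits $(0,T)$ into the open set $A=\{\overline{\rho}\neq 0\}$ and its complement: on the complement it invokes the measure-theoretic fact that a Lipschitz function has vanishing derivative a.e.\ on its zero level set, while on each of the (at most countably many) connected components of $A$ it runs a nonstationary-phase integration by parts --- using the lower bound $\varepsilon_n|\alpha_n'|\geq\kappa$ coming from $\overline\rho\neq 0$ there, together with upper bounds on $\varepsilon_n(|\alpha_n'|+|\alpha_n''|)$ --- to show $\exp(-i\alpha_n)\cvwstar 0$ locally, and then concludes $\overline\rho'=0$ a.e.\ on each piece by weak/strong duality. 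Your multiplication by $P_n=\varepsilon_n\rho_n$ sidesteps the whole decomposition: it manufactures exactly the product $P_n\exp(-i\alpha_n)$ handled by Lemma \ref{LemCVSolide} (a single integration by parts, no second-derivative control, no lower bound on the phase), and converts the statement into $(\overline\rho^{\,2})'=0$, from which constancy of $\overline\rho$ follows by continuity and connectedness. All the supporting steps check out: weak-$*$ convergence in $W^{1,\infty}$ gives the uniform bound and, via Arzel\`a--Ascoli and uniqueness of the limit, the uniform convergence $P_n\to\overline\rho$ needed to pass to the limit in $-\int\phi' P_n^2$; the conjugation step is legitimate because $P_n$ and $\alpha_n$ are real; and the splitting $w_n=w+(w_n-w)$ correctly isolates where the strong convergence of $w_n$ is indispensable. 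What your approach buys is brevity and the elimination of the level-set and connected-component bookkeeping; what the paper's buys is a pointwise (a.e.) identification of where $\overline\rho'$ vanishes, which does not pass through the square and so would survive in situations where one wants information on $\overline\rho'$ directly rather than on $\overline\rho^{\,2}$.
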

Now let us establish the behavior of the solid in the limit with the help of these lemmas. First,   using \eqref{CVh}, \eqref{CVForteVitesse} and the uniform estimates on $K_{\R^2}[w^\eps]$ in $C^{1-2/p}(\R^{2})$ if $p< +\infty$ (resp. ${\mathcal{LL}}(\R^{2})$ if $p=+\infty$) given by Proposition \ref{Prop:APE}, we deduce
\begin{equation} \label{CVDiffVit}
(h^{\varepsilon_{n}})' - K_{\R^2}[w^\eps](t,h^{\varepsilon_{n}}) \longrightarrow
h' - K_{\R^2}[{w}](t,h) \ \text{ in } \ L^{\infty}(0,T) \ \text{ as } \ n \rightarrow +\infty.
\end{equation}
Now, we rephrase \eqref{rprime2} with the complex variable:
\begin{equation*}
{\mathcal J}_{0} (\varepsilon_{n} r^{\varepsilon_{n}})' =
 \Re \Big\{ \gamma \overline{\zeta} \exp(-i \theta^{\varepsilon_{n}}) \Big[ ((h^{\varepsilon_{n}})' - K_{\R^2}[w^\eps](t,h^{\varepsilon_{n}}))_{1} + i((h^{\varepsilon_{n}})' - K_{\R^2}[w^\eps](t,h^{\varepsilon_{n}}))_{2} \Big] \Big\} + \tilde{R}_{2}^{\varepsilon_{n}}.
\end{equation*}
We apply Lemma \ref{LemCVSolide2} with $\rho_{n}=r^{\varepsilon_{n}}$, $\alpha_{n}=\theta^{\varepsilon_{n}}$ and
\begin{equation*}
w_{n} := \gamma \overline{\zeta} \Big[ ((h^{\varepsilon_{n}})' - K_{\R^2}[w^\eps](t,h^{\varepsilon_{n}}))_{1} + i((h^{\varepsilon_{n}})' - K_{\R^2}[w^\eps](t,h^{\varepsilon_{n}}))_{2} \Big]  + \exp( i \theta^{\varepsilon_{n}}) \tilde{R}_{2}^{\varepsilon_{n}} .
\end{equation*}
The assumption on $w_{n}$ comes directly from \eqref{CVRTilde} and \eqref{CVDiffVit}. We deduce that the function $R$ defined in \eqref{CVr} is constant. Taking into account that the initial data $r_0$ is independent of $\eps$ we therefore deduce that $\varepsilon_{n} \theta^{\varepsilon_{n}}$ converges to $0$ weakly-$*$ in $W^{2,\infty} (0,T;\R^{2})$. \par
We apply Lemma \ref{LemCVSolide} on \eqref{lprime2} to get rid of the second term in the right hand side and we arrive to \eqref{PointEuler}.
\begin{Remark}
Actually, we do not need to apply Lemma \ref{LemCVSolide} since we know that $\varepsilon r^{\varepsilon}$ converges to $0$ weakly-$*$ in $W^{1,\infty} (0,T)$. It can be noted however that using this lemma, Theorem \ref{MR} can be extended in a straightforward manner to the situation where $r_{0}$ depends on $\varepsilon$ as follows:
\begin{equation*}
\varepsilon r_{0}^{\varepsilon} \longrightarrow R_{0} \text{ as } \varepsilon \rightarrow 0^{+}.
\end{equation*}
In that case, one deduces that $\varepsilon_{n} r^{\varepsilon_{n}}$ converges to $R_{0}$ weakly-$*$ in $W^{1,\infty}(0,T)$, but due to Lemma \ref{LemCVSolide}, no additional term appears in \eqref{PointEuler}.
\end{Remark}
\section{Technical results}
\label{TR}
\subsection{Proof of Proposition \ref{PropKirchoffSueur}}
This is proven in \cite{GS}; we recall it here for the sake of completeness. As we consider $\varepsilon$ fixed here, we omit the $\varepsilon$ in the notations. In particular, here $H$ stands for $H^{\varepsilon}$ and $G$ for $G^{\varepsilon}$. \par
\ \par
\noindent
{\bf 1.} We first give another form of the above Hamiltonian. Let us prove that 
\begin{equation} \label{Hamiltonien2emeForme}
2 \mathcal{H} = m  |\ell(t)|^2 +   \mathcal{J} r(t)^2 +  \int_{ \mathcal{F}_{0} } ( |\hat{v}(t,\cdot )|^2 + 2  (\gamma + \alpha ) \hat{v}(t,\cdot ) \cdot    H   ) \, dx,
\end{equation}
where $\alpha$ is given by \eqref{DefAlpha} and
\begin{equation} \label{hatv}
\hat{v}:= v - (\gamma + \alpha) H.
\end{equation}
Note in particular that
\begin{equation} \label{HatvO1surx2}
\hat{v}(x)= {\mathcal O}(1/|x|^{2}) \ \text{ as }  \ |x| \rightarrow +\infty.
\end{equation}
Let us denote  
\begin{equation*}
\Psi (x)  :=  \int_{\mathcal{F}_{0}} G(x,y) \om(y) dy
\end{equation*}
which is a stream function of  $K[\om]$ vanishing on the boundary $\mathcal{S}_{0}$:
\begin{equation*}
K[\omega]= \nabla^{\perp} \Psi.
\end{equation*}
Let us also  denote 
\begin{equation*}
\nabla \Phi :=  \ell_1 \nabla \Phi_1 +  \ell_2 \nabla \Phi_2 +  r  \nabla \Phi_3,
\end{equation*}
so that
\begin{equation} \label{DCV}
\hat{v} = K [\omega] +  \nabla \Phi.
\end{equation}
Then we compute 
\begin{eqnarray*}
\int_{\mathcal{F}_0}  |\hat{v}|^2 dx = 
\int_{\mathcal{F}_0} \nabla^\perp  \Psi \cdot   \hat{v}
+  \int_{\mathcal{F}_0}  \nabla^\perp   \Psi \cdot  \nabla \Phi 
+ \int_{\mathcal{F}_0}  \nabla \Phi  \cdot  \nabla \Phi  .
\end{eqnarray*}
First, integrating by parts yields
\begin{gather*}
\int_{\mathcal{F}_0} \nabla^\perp \Psi \cdot \hat{v}
= - \int_{\mathcal{F}_0 \times \mathcal{F}_0}  G(x,y) \omega (x) \omega (y) \, dx \, dy , \\
\int_{\mathcal{F}_0}  \nabla^\perp  \Psi \cdot  \nabla \Phi = 0, \\
\int_{\mathcal{F}_0}  \hat{v}  \cdot  H = - \int_{\mathcal{F}_0  } \omega (x)  \Psi_H (x) dx .
\end{gather*}
There is no boundary terms since  $ \Psi$ and $ \Psi_H$ vanish on the boundary $\mathcal{S}_{0}$, and $ \nabla \Phi $ and $ \hat{v}$  decrease also   like $1/ | x |^2$  at infinity. 
\par
Also, by definition, we have 
\begin{eqnarray*}
\int_{\mathcal{F}_0}  \nabla \Phi  \cdot  \nabla \Phi = X^T \mathcal{M}_2 X .
\end{eqnarray*}
This proves \eqref{Hamiltonien2emeForme} by using \eqref{GepsHydro}. \par
\ \par
\noindent
{\bf 2.} We use \eqref{Euler11}, the fact that $\partial_{t} v = \partial_{t} \hat{v} \in L^{\infty}(0,T;L^{q}({\mathcal F}_{0}))$ for any $q$ in $(1,p]$ (resp. in $(1,+\infty)$ if $p=+\infty$) and we notice that $H $ and $v$ are in $ L^{\infty}(0,T;L^{p}({\mathcal F}_{0}))$; this allows to write
\begin{align*}
\mathcal{H}{'} (t) &=   m \ell \cdot \ell{'} (t) + {\mathcal J} r {r}' (t) 
+  \int_{\mathcal{F}_0  } (\partial_ t \hat{v} \cdot \hat{v} + (\gamma + \alpha )  \partial_ t  \hat{v} \cdot H ),  \\ 
&=  m \ell  \cdot \ell' (t) + {\mathcal J} r r' (t) +    \int_{\mathcal{F}_0  } \partial_ t  v \cdot  v,  \\ 
& =  m \ell  \cdot  \ell' (t) + {\mathcal J} r r' (t) - \int_{\mathcal{F}_0  } ( \left[(v-\ell-rx^\perp) \cdot \nabla \right] v 
+ rv^\perp + \nabla q )\cdot  v.
\end{align*}
Then
\begin{equation*}
\mathcal{H}{'} (t) =  I_1 + I_2 + I_3,
\end{equation*}
where
\begin{eqnarray*}
I_1 :=  m \ell   \cdot \ell' (t) + {\mathcal J} r r' (t)  - \int_{\mathcal{F}_0  } \nabla q \cdot  v, \quad
I_2 := -\int_{\mathcal{F}_0  } (v-\ell)\cdot\nabla v  \cdot v , \quad
I_3 := - r  \int_{\mathcal{F}_0} [v^{\perp} - (x^{\perp} \cdot \nabla) v] \cdot v .
\end{eqnarray*}
Let us justify that each integral above is convergent. For $I_2$ the integrability is clear.
For $I_{3}$, we write $I_3 = I_{4} + I_{5} + I_{6}$, with
\begin{equation*}
I_{4}:= -r \int_{\mathcal{F}_0} \hat{v}^\perp     \cdot v , \quad 
I_5 :=  r \int_{\mathcal{F}_0} x^\perp\cdot\nabla  \hat{v} \cdot v , \quad 
I_6:=  (\gamma+\alpha) r \int_{\mathcal{F}_0} (x^\perp\cdot\nabla H - H^\perp )   \cdot v .
\end{equation*}
The integrability of $I_{4}$ is clear, for $I_5$ we use that $\hat{v}$  decreases like $1/ | x |^2$  at infinity, so that $x^\perp\cdot\nabla  \hat{v}$ is integrable and for $I_6$ we use  \eqref{HSeriesLaurent2}. 
It remains to justify the integral in $I_1$.  We analyze the decay of the pressure at infinity, observing that (\ref{Euler11}) reads as follows:
\begin{equation}
\label{Decroiq}
- \nabla q = \partial_t \hat{v}  +\left(v-\ell \right) \cdot \nabla v - r x^\perp \cdot\nabla \hat{v}
+r \hat{v}^\perp + (\alpha+\gamma) r [H^\perp - (x^\perp \cdot\nabla) H],
\end{equation}
so that $\nabla q$  decreases like $1/ | x |^2$  at infinity. Integrating along rays yields that the pressure decreases like $1/ | x |$  at infinity. 
This allows to integrate by parts both  $I_1$ and  $I_2$. \par
\ \par
Now using that \eqref{Euler13} and then the equations (\ref{Solide11})--(\ref{Solide12}), we obtain that  $I_1 = 0$. 
For what concerns $I_{2}$ we get that 
\begin{equation*}
I_2 =  - \frac{1}{2} \int_{\partial \mathcal{S}_0} |v|^2 (v - \ell)\cdot n  .
\end{equation*}
For what concerns $I_{3}$, we consider $R>0$ large in order that ${\mathcal S}_{0} \subset B(0,R)$, and consider the same integral as $I_{3}$, over ${\mathcal F}_{0} \cap B(0,R)$. Integrating by parts we obtain
\begin{equation*}
\int_{\mathcal{F}_0 \cap B(0,R)} [v^{\perp} - (x^{\perp} \cdot \nabla) v] \cdot v
= - \int_{\partial \mathcal{S}_0} (x^{\perp} \cdot n) \frac{|v|^{2}}{2} - \int_{S(0,R)} (x^{\perp} \cdot n) \frac{|v|^{2}}{2},
\end{equation*}
where we denote by $n$ also the unit outward normal on the circle $S(0,R)$. Of course $x^{\perp} \cdot n=0$ on $S(0,R)$, so letting $R \rightarrow +\infty$, we end up with
\begin{equation*}
I_{3} =\frac{1}{2} \int_{\partial \mathcal{S}_0} (rx^{\perp} \cdot n)|v|^{2}.
\end{equation*}
Using \eqref{Euler13} we deduce $I_{2}+I_{3}=0$, so in total we get $\mathcal{H}{'} (t) = 0$.
\par
\subsection{Proofs of Lemmas \ref{blasius} and \ref{Blabla}}
\begin{proof}[Proof of Lemma \ref{blasius}]
By polarization, it is sufficient to consider the case where $f=g$. Let us consider $\gamma= (\gamma_{1},\gamma_{2}): [0,1] \rightarrow \R^{2}$ a smooth parameterization of the Jordan curve ${\mathcal C}$. On one side, one has
\begin{equation} \label{cotereel}
\int_{{\mathcal C}} (f \cdot f) n \, ds = \int_{0}^{1} \big( f_{1}(\gamma(t))^{2} + f_{2}(\gamma(t))^{2} \big) \begin{pmatrix} -{\gamma}_{2}'(t) \\ {\gamma}_{1} ' (t)\end{pmatrix} \, dt.
\end{equation}
On the other side, one has
\begin{equation} \label{cotecomplexe}
\int_{{\mathcal C}} (f_{1}(z) - i f_{2}(z))^{2} \, dz
= \int_{{\mathcal C}} \Big( f_{1}(\gamma(t)) - i f_{2}(\gamma(t)) \Big)
\Big[ \big( f_{1}(\gamma(t)) - i f_{2}(\gamma(t)) \big) ({\gamma}_{1}'(t) + i {\gamma}_{2}'(t)) \Big]  \, dt .
\end{equation}
But since $f$ is tangent to ${\mathcal C}$, one sees that the expression inside the brackets in \eqref{cotecomplexe} is real, and hence is equal to its complex conjugate. It follows that
\begin{equation*}
\int_{{\mathcal C}} (f_{1}(z) - i f_{2}(z))^{2} \, dz
= \int_{{\mathcal C}} \big| f_{1}(\gamma(t)) - i f_{2}(\gamma(t)) \big|^{2} ({\gamma}_{1}'(t) - i {\gamma}_{2}'(t))  \, dt ,
\end{equation*}
and \eqref{bla1} follows. \par
The proof of \eqref{bla2} is analogous: using again 
\begin{equation*}
(f_{1} (\gamma(t)) - i f_{2} (\gamma(t)))( {\gamma}_{1}'(t) + i {\gamma}_{2}'(t))
=(f_{1} (\gamma(t)) + i f_{2} (\gamma(t)))({\gamma}_{1}'(t) - i {\gamma}_{2}'(t)),
\end{equation*}
we deduce
\begin{equation*}
\int_{{\mathcal C}} (f_{1}(z) - i f_{2}(z))^{2} z \, dz
= \int_{{\mathcal C}} |f_{1}(\gamma(t)) - i f_{2}(\gamma(t))|^{2} (\gamma_{1}(t) + i \gamma_{2}(t)) ({\gamma}_{1}'(t) - i {\gamma}_{2}'(t))  \, dt ,
\end{equation*}
so that
\begin{eqnarray*}
\Re \left(\int_{{\mathcal C}} (f_{1}(z) - i f_{2}(z))^{2} z \, dz\right)
&=& \int_{{\mathcal C}} \Big( f_{1}(\gamma(t))^{2} + f_{2}(\gamma(t))^{2} \Big) \big( \gamma_{1}(t) {\gamma}_{1}'(t) +  \gamma_{2}(t) {\gamma}_{2}'(t) \big)  \, dt \\
&=& \int_{ \mathcal{C}} (f  \cdot f) (x^{\perp} \cdot n)  \,ds .
\end{eqnarray*}
\end{proof}
\begin{proof}[Proof of Lemma \ref{Blabla}]
Parameterizing $\partial  \mathcal{S}_{0}$ by $\gamma= (\gamma_{1},\gamma_{2}): [0,1] \rightarrow \R^{2}$ as previously we have
\begin{align*}
\int_{\partial \mathcal{S}_{0}} \overline{z} (H_{1} -i H_{2})  z \, dz 
&= \int_{\partial \mathcal{S}_{0}} (\gamma_{1}^{2}(t) + \gamma_{2}^{2}(t))
\Big[ ( H_{1}(\gamma(t)) - i H_{2}(\gamma(t)) ) ({\gamma}_{1}'(t) + i {\gamma}_{2}'(t)) \Big]  \, dt .
\end{align*}
Since $H$ is tangent to $\partial {\mathcal S}_{0}$, the imaginary part of the bracket above is zero, so the integral is real.
\end{proof}
\subsection{Proofs of Lemmas \ref{LemCVSolide} and \ref{LemCVSolide2}}
\begin{proof}[Proof of Lemma \ref{LemCVSolide}]
We first note that the sequence $(\varepsilon_{n} \rho_{n} \exp(i \alpha_{n}))_{n \in \N}$ is bounded in $L^{\infty}(0,T)$, so it suffices to prove that the convergence \eqref{CV1} takes place in the sense of distributions. Given $\varphi \in C^{\infty}_{0}((0,T))$, we see that
\begin{eqnarray*}
\int_{a}^{b} \varepsilon_{n} \rho_{n}(t) \exp(i \alpha_{n}(t)) \varphi(t) \, dt 
&=& -i \varepsilon_{n} \int_{a}^{b} i \alpha_{n}'(t) \exp(i \alpha_{n}(t)) \varphi(t) \, dt  \\
&=& i \varepsilon_{n} \int_{a}^{b} \exp(i \alpha_{n}(t)) \varphi'(t) \, dt,
\end{eqnarray*}
which yields the desired convergence. 
\end{proof}
\ \par
\begin{proof}[Proof of Lemma \ref{LemCVSolide2}]
Let us define the following open subset of $(0,T)$:
\begin{equation} \label{EnsembleA}
A:= \{ x \in (0,T) \ / \ \overline{\rho}(x) \not =0 \}.
\end{equation}
\ \par
\noindent
$\bullet$  It is classical that, by the Lipschitz character of the function $\overline{\rho}$ only, one has
\begin{equation*}
\overline{\rho}'=0 \ \text{ a. e. on } \  (0,T) \setminus A. 
\end{equation*}
\ \par
\noindent
$\bullet$ Now let us consider what happens on $A$. Define
\begin{equation*}
\Theta(t):= \int_{0}^{t} \overline{\rho}.
\end{equation*}
Consider a connected component of $A$, say $(a,b)$. Let us show that, due to the nonstationary phase $\Theta' \not =0$ in $(a,b)$, one has 
\begin{equation} \label{CVDalpha}
\exp(- i \alpha_{n}) \stackrel{w*}{\longrightharpoonup} 0 \ \text{ in } \ L^{\infty}(a,b) \ \text{ as } \ n \rightarrow +\infty.
\end{equation}
Of course, it is sufficient to prove this convergence in the sense of ${\mathcal D}'(a,b)$. Hence, let us consider $\varphi \in C^{\infty}_{0}((a,b))$, say $\mbox{Supp} (\varphi) \subset [\tilde{a},\tilde{b}] \subset (a,b)$. Note that by \eqref{DefAE} and \eqref{CVEpsilonRho}, one has
\begin{equation*}
\varepsilon_{n} \alpha_{n} \stackrel{w*}{\longrightharpoonup} \Theta \ \text{ in }\  W^{2,\infty}([0,T]).
\end{equation*}
Consequently there exists $\kappa>0$ such that for all $n$ large enough, one has
\begin{equation} \nonumber
\varepsilon_{n} |\alpha_{n}'(t)| \geq \kappa>0 \ \text{ and } \ 
\varepsilon_{n} |\alpha_{n}'(t)| + \varepsilon_{n} |\alpha_{n}''(t)| \leq \kappa^{-1} \ 
 \text{ on } [\tilde{a},\tilde{b}].
\end{equation}
For such $n$ one has
\begin{eqnarray*}
\int_{a}^{b} \exp(-i \alpha_{n}(t)) \varphi(t) \, dt
&=& \int_{\tilde{a}}^{\tilde{b}} \alpha'_{n}(t)\exp(-i \alpha_{n}(t)) \frac{\varphi(t)}{\alpha_{n}'(t)} \, dt \\
&=& -i \int_{\tilde{a}}^{\tilde{b}} \exp(-i \alpha_{n}(t)) \frac{\varphi'(t) \alpha_{n}'(t) - \varphi(t) \alpha_{n}''(t)}{(\alpha'_{n})^{2}(t)} \, dt \\
&=& -i \varepsilon_{n} \int_{\tilde{a}}^{\tilde{b}} \exp(-i \alpha_{n}(t)) \frac{\varphi'(t) (\varepsilon_{n}\alpha_{n})'(t) - \varphi(t) (\varepsilon_{n}\alpha_{n})''(t)}{(\varepsilon_{n}\alpha'_{n})^{2}(t)} \, dt,
\end{eqnarray*}
and \eqref{CVDalpha} follows. \par
Hence, by weak/strong convergence we deduce from \eqref{cvwnw}, \eqref{condrhow} and \eqref{CVDalpha} that for any $\varphi \in C^{\infty}_{0}((a,b);\R)$,
\begin{equation*}
\varepsilon_{n} \langle \rho_{n}', \varphi \rangle_{L^{\infty} \times L^{1}} = \Re \big( \langle \exp(-i \alpha_{n}), w_{n} \varphi \rangle_{L^{\infty} \times L^{1}} \big) \longrightarrow 0 \ \text{ as } \  n \rightarrow +\infty.
\end{equation*}
Consequently, on each connected component $(a,b)$ of $A$, we obtain that
\begin{equation*}
\overline{\rho}'=0 \ \text{ a. e. on } \  (a,b).
\end{equation*}
Of course there is at most a countable quantity of such connected components. Hence we obtain that $\overline{\rho}'=0$ a.e. on $(0,T)$ and the conclusion follows.
\end{proof}
\section{Appendix. Proof of Theorem \ref{ThmYudo}}
\label{PreuveYudo}
We first prove a result  of global in time existence and uniqueness similar to the celebrated result by 
Yudovich  about a fluid alone. We recall that the space $\mathcal{LL}$ was defined in \eqref{DefLL}.
\begin{Theorem} \label{ThmYudo0}
For any $u_0 \in C^{0}(\overline{\mathcal{F}_0};\R^{2})$, $(\ell_0,r_0) \in \R^2 \times \R$, such that:
\begin{equation} \label{CondCompatibiliteYudo}
\div u_0 =0 \text{ in } {\mathcal F}_0 \ \text{ and } \  u_0   \cdot  n = (\ell_0 + r_0 x^{\perp})   \cdot  n \text{ on } \partial \mathcal{S}_0,
\end{equation}
\begin{equation} \label{TourbillonYudoinf}
w_0 := \curl u_0  \in L_c^{\infty}(\overline{{\mathcal F}_0}),
\end{equation}
\begin{equation*}
\lim_{|x| \rightarrow +\infty} u_{0}(x) =0,
\end{equation*}
there exists a unique solution $(h',r,u)$ of \eqref{Euler1}--\eqref{Solideci} in $C^1 (\R^+; \R^2 \times \R) \times L^{\infty}(\R^+, \mathcal{LL}({\mathcal F}(t))$. Moreover for all $t>0$, $w(t):=\curl u(t) \in L^\infty_c(\overline{{\mathcal F}(t)})$.
\end{Theorem}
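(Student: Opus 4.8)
The plan is to work entirely in the body frame, where the fluid domain is the fixed exterior domain $\mathcal{F}_0$ and the coupled system reads \eqref{Euler11}--\eqref{Solide1ci}. The first step is to turn the solid equations into an ordinary differential equation that no longer features the pressure explicitly. Proceeding as in Section \ref{PF} (but at fixed geometry, hence without the vanishing powers of $\varepsilon$), one integrates the transported momentum equation \eqref{EqEulTrans} against the Kirchhoff potentials $\nabla \Phi_i$ and uses Green's formula; the contribution of $\partial_t v$ produces the added-mass matrix $\mathcal{M}_2$, and combining with \eqref{Solide11}--\eqref{Solide12} yields a closed system of the form
\begin{equation*}
\mathcal{M}\binom{\ell}{r}'(t) = \Sigma(\omega,\ell,r),
\end{equation*}
where $\mathcal{M}=\mathcal{M}_1+\mathcal{M}_2$ is symmetric positive definite, hence invertible, and $\Sigma$ depends continuously on $(\omega,\ell,r)$ through the decomposition \eqref{vitedechydro}. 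Together with the vorticity transport equation \eqref{vorty1} and the reconstruction of $v$ from Proposition \ref{PourFairePlaisirAFranckie}, this recasts the whole problem as a transport equation for $\omega$ driven by the relative field $\tilde v := v - \ell - r x^\perp$, which is divergence-free, tangent to $\partial \mathcal{S}_0$ and log-Lipschitz, coupled to the ODE above.

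For local existence I would run a successive-approximation (or Banach fixed-point) scheme on $(\ell,r,\omega)$ over a short time interval. Given $(\ell,r)\in C^0$ and $\omega\in L^\infty$, Proposition \ref{propdefK} in its $\mathcal{LL}$ version shows $\tilde v$ is log-Lipschitz, with norm controlled by $\|\omega\|_{L^1\cap L^\infty}$, $|\ell|$, $|r|$ and $|\gamma|$; by Osgood's lemma the field $\tilde v$ then generates a unique continuous flow on $\overline{\mathcal{F}_0}$ that preserves the boundary, so the transport equation determines $\omega(t)$ as $\omega_0$ pushed forward by this flow, while the ODE determines $(\ell,r)$. The resulting map is a contraction on a small time interval in a suitable topology (for instance $C^0$ in time for $(\ell,r)$ and weak-$*$ for $\omega$, the velocity estimates supplying the needed equicontinuity), which produces a local solution.

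The global-in-time extension rests on three a priori bounds. First, since $\tilde v$ is divergence-free and tangent to the boundary, transport preserves every $L^q$ norm, so $\|\omega(t)\|_{L^\infty}=\|w_0\|_{L^\infty}$ and $\|\omega(t)\|_{L^1}=\|w_0\|_{L^1}$ for all $t$, as in \eqref{ConsOmega}. Second, $\|\tilde v\|_{L^\infty}$ is controlled through Lemma \ref{ift_lop} and the $\mathcal{LL}$ estimate, so the support radius $\rho(t)$ of $\omega$ grows at most linearly in time (Lemma \ref{support}). Third, the conserved energy $\mathcal{H}$ of Proposition \ref{PropKirchoffSueur}, whose positive-definite quadratic part $X^T\mathcal{M}X$ dominates $|\ell|^2+|r|^2$, bounds the solid velocity in terms of $\rho(t)$ exactly as in the proof of Proposition \ref{ProRata}; a Gronwall argument then keeps $\rho$, $\ell$ and $r$ finite on any $[0,T]$. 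These uniform bounds preclude finite-time blow-up and extend the local solution globally, yielding the stated $C^1\times L^\infty(\mathcal{LL})$ regularity and the fact that $w(t)\in L^\infty_c(\overline{{\mathcal F}(t)})$.

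Finally, for uniqueness I would compare two solutions $(\ell^1,r^1,\omega^1)$ and $(\ell^2,r^2,\omega^2)$ issuing from the same data, adapting Yudovich's argument to the coupling. The natural quantity to control is
\begin{equation*}
\mathcal{E}(t) := \|v^1(t,\cdot)-v^2(t,\cdot)\|_{L^2(\mathcal{F}_0)}^2 + |\ell^1(t)-\ell^2(t)|^2 + |r^1(t)-r^2(t)|^2,
\end{equation*}
or equivalently an $L^1$-distance between the two flow maps. Differentiating in time and invoking the log-Lipschitz bound on the velocities produces a differential inequality $\mathcal{E}'(t)\le C\,\mu(\mathcal{E}(t))$ with $\mu(s)\sim s\ln(1/s)$ an Osgood modulus, the ODE part contributing only a genuine Lipschitz term via the positive-definiteness of $\mathcal{M}$ and the continuity of $\Sigma$. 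Since $\mathcal{E}(0)=0$ and $\int_0 ds/\mu(s)=+\infty$, Osgood's lemma forces $\mathcal{E}\equiv 0$. The main obstacle is precisely this coupled uniqueness estimate: one must close the borderline Yudovich log-estimate for the fluid simultaneously with the ODE for the solid, ensuring that the non-Lipschitz dependence of the velocity on the vorticity is the only critical term, while the solid trajectories, the circulation contribution $\gamma H$ and the Kirchhoff potentials enter only through Lipschitz, and hence harmless, terms.
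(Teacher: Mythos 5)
Your overall architecture matches the paper's: pass to the body frame, eliminate the pressure from the solid equations via the Kirchhoff potentials to get a closed ODE $\mathcal{M}(\ell,r)'=\Sigma(\omega,\ell,r)$ with $\mathcal{M}$ invertible, run a fixed point for local existence, use the conservation laws of Section \ref{Sec:APE} for global existence, and adapt Yudovich's energy method for uniqueness. Your uniqueness step is an acceptable variant: you propose an Osgood inequality $\mathcal{E}'\le C\,\mathcal{E}\ln(1/\mathcal{E})$, while the paper runs the $L^p$ version ($\mathcal{E}'\le Cp\,\mathcal{E}^{1/p'}$, hence $\mathcal{E}\le (Ct)^p$, then $p\to+\infty$); these are essentially equivalent, and you correctly identify that the solid ODE and the boundary terms $\int_{\partial\mathcal{F}_0}\breve q\,\breve v\cdot n$ contribute only Lipschitz terms absorbed by the positive definiteness of $\mathcal{M}$.

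The one genuine gap is your local existence step. You assert that the iteration map is ``a contraction on a small time interval in a suitable topology,'' but with $w_0$ only in $L^\infty_c$ the reconstructed velocity is merely log-Lipschitz, so the flow map, and hence the pushed-forward vorticity, depends on the input velocity only in an Osgood/H\"older fashion (stability estimates of the type $\delta\mapsto\delta^{\exp(-Ct)}$), not Lipschitz; no natural norm on $(\omega,\ell,r)$ makes the map a strict contraction, and ``weak-$*$ for $\omega$'' does not rescue this. The paper sidesteps the difficulty by applying Schauder's fixed point theorem on the convex set $\mathcal{C}$: one only needs $\mathcal{V}(\mathcal{C})\subset\mathcal{C}$, relative compactness of $\mathcal{V}(\mathcal{C})$ (from the uniform $W^{1,p}$ bounds, Aubin--Lions, and uniform convergence of the flows), and continuity of $\mathcal{V}$ -- none of which requires a contraction estimate. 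A secondary point you gloss over: to define the ODE right-hand side one must check that $(v-\ell-rx^{\perp})\cdot\nabla v+rv^{\perp}$ lies in $L^p(\mathcal{F}_0)$ despite the unbounded factor $x^{\perp}$, which the paper does via the decay estimates $v=\mathcal{O}(1/|x|)$, $\nabla v=\mathcal{O}(1/|x|^{2})$ obtained from the Laurent expansion of $v$ outside a large ball; this is also what justifies the integrations by parts producing $\mathcal{M}_2$.
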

We first prove the local in time existence part by Schauder's fixed point theorem. The global in time existence follows then from our a priori estimates of Section {\ref{Sec:APE}}. Finally we follow Yudovich's approach for what concerns the uniqueness.  \par
\subsection{Proof of Theorem \ref{ThmYudo0}}
\ \par
\noindent
{\bf Reformulating the problem.} To begin with, we consider the equations in the body frame as in Subsection \ref{Subsec:VEBF}. 
Next, we decompose the pressure. To this purpose we recall the following result from  \cite{SimaderSohr} about the Leray projector on ${\mathcal F}_{0}$:
\begin{Lemma} \label{LemProjLeray}
For any $q\in (1,\infty)$, we have
$$ L^q ({\mathcal F}_{0};\R^{2})  = L^q_\sigma ({\mathcal F}_{0} ) \oplus E^q ({\mathcal F}_{0} ), $$
where 
\begin{gather*}
L^q_\sigma ({\mathcal F}_{0} ) :=  \{ u \in  L^q ({\mathcal F}_{0};\R^{2}) / \ \div u= 0 \text{ in } {\mathcal F}_{0} \text{ and } u \cdot n = 0 \text{ on } \partial {\mathcal S}_{0} \}, \\
E^q ({\mathcal F}_{0} ) := \{  \nabla p \ / \ p \in  L^q_\text{loc} ( \overline{{\mathcal F}_{0}};\R)  \text{ and }  \nabla p  \in L^q({\mathcal F}_{0};\R^{2}) \} .
\end{gather*}
Moreover the projection $P_q$ from  $L^q ({\mathcal F}_{0};\R^{2} ) $ onto $L^q_\sigma ({\mathcal F}_{0} )$ along $E^q ({\mathcal F}_{0})$ is linear continuous.
\end{Lemma}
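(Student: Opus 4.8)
The plan is to recognize the statement as the $L^q$ Helmholtz--Weyl decomposition for the exterior domain ${\mathcal F}_0$, and to reduce the entire assertion to the unique solvability, with $L^q$ bounds, of a single weak Neumann problem. First I would introduce the homogeneous Sobolev space
\[
\hat{W}^{1,q}({\mathcal F}_0) := \{ p \in L^q_{loc}(\overline{{\mathcal F}_0};\R) \ : \ \nabla p \in L^q({\mathcal F}_0;\R^2) \}
\]
taken modulo additive constants, so that by definition $E^q({\mathcal F}_0) = \{\nabla p : p \in \hat{W}^{1,q}({\mathcal F}_0)\}$. The core step is then the following: for a given datum $f \in L^q({\mathcal F}_0;\R^2)$, find $\nabla p \in L^q({\mathcal F}_0;\R^2)$ such that
\[
\int_{{\mathcal F}_0} \nabla p \cdot \nabla \varphi \, dx = \int_{{\mathcal F}_0} f \cdot \nabla \varphi \, dx \quad \text{for all } \varphi \in \hat{W}^{1,q'}({\mathcal F}_0),
\]
together with the a priori bound $\| \nabla p \|_{L^q} \le C \| f \|_{L^q}$. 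Here $q' = q/(q-1)$, and this weak formulation encodes $\Delta p = \div f$ in ${\mathcal F}_0$ and $\partial_n p = f \cdot n$ on $\partial {\mathcal S}_0$.

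Granting this solvability, the rest is bookkeeping. Given $u \in L^q({\mathcal F}_0;\R^2)$, I would solve the problem with $f = u$ and set $v := u - \nabla p$. The weak equation says precisely that $\int_{{\mathcal F}_0} v \cdot \nabla \varphi \, dx = 0$ for every $\varphi \in \hat{W}^{1,q'}({\mathcal F}_0)$, which is the weak form of $\div v = 0$ in ${\mathcal F}_0$ and $v \cdot n = 0$ on $\partial {\mathcal S}_0$; hence $v \in L^q_\sigma({\mathcal F}_0)$ and $u = v + \nabla p$ is the desired decomposition. For uniqueness of the decomposition, equivalently the triviality of the intersection $L^q_\sigma \cap E^q = \{0\}$, suppose $0 = v + \nabla p$ with $v \in L^q_\sigma({\mathcal F}_0)$; then $\nabla p = -v$ solves the homogeneous version ($f=0$) of the weak problem, so the uniqueness part of the solvability forces $\nabla p = 0$ and thus $v = 0$.

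Finally, the projection $P_q : u \mapsto v = u - \nabla p$ is linear in $u$ by linearity of the solution map, it is idempotent since applying the construction to $v \in L^q_\sigma$ yields $\nabla p = 0$ and hence $P_q v = v$, and it is continuous because $\| v \|_{L^q} \le \| u \|_{L^q} + \| \nabla p \|_{L^q} \le (1+C)\| u \|_{L^q}$.

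The main obstacle is the $L^q$ estimate for $q \neq 2$. When $q = 2$ the weak problem is solved directly by the Lax--Milgram or Riesz representation theorem in the Hilbert space $\hat{W}^{1,2}({\mathcal F}_0)$, and no singular-integral input is required. For general $q$ I would follow Simader--Sohr: one establishes the a priori estimate by a duality argument pairing the $L^q$ problem against the $L^{q'}$ problem, thereby reducing matters to a localized Calder\'on--Zygmund and boundary-regularity estimate for the Laplacian near $\partial {\mathcal S}_0$ and in the interior, while the behaviour at infinity is controlled through the choice of $\hat{W}^{1,q}$ (where the admissible decay, and hence whether the homogeneous problem has a nontrivial kernel, depends on $q$ relative to the dimension $2$). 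Carrying out this $L^q$ theory cleanly on an exterior domain is exactly the content of the cited reference, so at this step I would invoke their solvability theorem rather than reproduce the singular-integral analysis.
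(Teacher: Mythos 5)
Your proposal is correct and matches the paper's treatment: the paper gives no proof of this lemma but simply quotes it from Simader--Sohr, and your reduction to the weak Neumann problem in the homogeneous space $\hat{W}^{1,q}$, with the $L^q$ solvability deferred to that same reference, is exactly the argument underlying the cited result. The surrounding bookkeeping (existence and directness of the decomposition, idempotence and boundedness of $P_q$) is accurate.
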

Since, by density of smooth compactly supported vector fields in $L^q ({\mathcal F}_{0};\R^{2})$, $P_q$ and $P_{\tilde{q}}$ coincide  on $L^q ({\mathcal F}_{0};\R^{2})  \cap L^{\tilde{q}}({\mathcal F}_{0};\R^{2}) $, we will simply denote $P$ without dwelling. \par
This allows to reformulate the solid equation as follows. We introduce $\mu$ up to an additive constant by
\begin{equation} \label{DefNablaMu}
\nabla \mu := - (Id -P ) \Big( (v-\ell-r x^\perp  ) \cdot\nabla v + r v^{\perp} \Big).
\end{equation}
We will see that in the case under view this is well defined in $L^{\infty}_{loc}(\R^{+};L^{p}({\mathcal F}_{0}))$.
Then, assuming that the solution has the regularity claimed in Theorem \ref{ThmYudo}, we observe that 
\begin{equation*}
\partial_{t} v - \begin{bmatrix}  \ell \\ r \end{bmatrix}' \cdot (\nabla \Phi_{i})_{i=1,2,3} \in L^{p}_{\sigma}({\mathcal F}_{0}).
\end{equation*}
We deduce that
\begin{eqnarray*}
\begin{bmatrix}  \ell \\ r \end{bmatrix}'  \cdot (\nabla \Phi_{i})_{i=1,2,3} =  (Id -P ) \partial_{t } v ,
\end{eqnarray*}
where  the functions $\Phi_{i}$ are defined in \eqref{t1.3sec}-\eqref{t1.5sec}. 
We obtain that 
%
the pressure $q$  can be decomposed into 
\begin{equation} \label{CalcPression}
 \nabla   q =  \nabla   \mu- \begin{bmatrix}  \ell \\ r \end{bmatrix}'  \cdot (\nabla \Phi_{i})_{i=1,2,3} .
\end{equation}
It is now elementary to see that the equation of the solid reads
\begin{equation} \label{CalcPression2}
{\mathcal M} \begin{pmatrix} \ell^{1} \\ \ell^{2} \\ r \end{pmatrix}'
=  \Bigg( \int_{{\mathcal F}_{0}} \nabla \mu (\tau,x) \cdot \nabla \Phi_{i}(x) \, dx \Bigg)_{i=1,2,3} 
-m r \begin{pmatrix} -\ell^{2} \\ \ell^{1} \\ 0 \end{pmatrix},
\end{equation}
where ${\mathcal M}$ is given in \eqref{InertieMatrix}. \par
\ \par
\noindent
{\bf An operator.} Let us introduce an operator, whose fixed points give local in time solutions to the system. We denote
\begin{equation*}
\overline{\rho}:= \min \{ \rho >0 \ / \ \mbox{Supp\,}(w_{0}) \subset \overline{B}(0,\rho) \},
\end{equation*}
and as before we denote $\gamma$ the circulation of $u_{0}$ around $\partial {\mathcal S}_{0}$.
For $T>0$, we let
\begin{align*}
{\mathcal C}:= \Big\{ (\omega,\ell,r) \in & \ L^{\infty}(0,T; L^{\infty}_{c}({\mathcal F}_{0})) \times W^{1,1}([0,T];\R^{3}) \ \big/ \\
& {\it i. }\ \  \| \omega \|_{L^{\infty}(0,T;L^{1}({\mathcal F}_{0}))} \leq \| w_{0} \|_{L^{1}({\mathcal F}_{0})}, \  
\| \omega \|_{L^{\infty}(0,T;L^{\infty}({\mathcal F}_{0}))} \leq \| w_{0} \|_{L^{\infty}({\mathcal F}_{0})}, \\
& \hskip 1cm \text{ and } \int_{{\mathcal F}_{0}} w(t,x) \, dx = \int_{{\mathcal F}_{0}} w_{0}(x) \, dx, \ \forall t \in [0,T], \\ 
&  {\it ii. }\ \  \mbox{Supp\,}(\omega(t)) \subset \overline{B}(0,\overline{\rho} + 1), \\ 
& {\it iii. }\ \  \partial_{t} \omega \in L^{1}(0,T;W^{-1,3}({\mathcal F}_{0}))
\ \text{ and } \ \  \| \partial_{t} \omega \|_{L^{1}(0,T;W^{-1,3}({\mathcal F}_{0}))} \leq 1, \\
&  {\it iv. }\ \  \| \ell - \ell_{0} \|_{W^{1,1}(0,T)} \leq 1, \  \| r - r_{0} \|_{W^{1,1}(0,T)} \leq 1
\Big\}.
\end{align*}
Now we define the operator ${\mathcal V}:{\mathcal C} \rightarrow {\mathcal C}$, mapping $(\omega,\ell,r)$ to $(\tilde{\omega},\tilde{\ell},\tilde{r})$ as follows. \par
We first introduce $v$ as the solution of \eqref{DivCurlSystem} associated to $(\omega,\ell,r)$ and $\gamma$. As is classical (see Proposition \ref{propdefK}), the resulting $v$ is bounded and log-Lipschitz uniformly in time with
\begin{equation} \label{EstLL}
\| v \|_{L^{\infty}(0,T;\mathcal{LL}({\mathcal F}_{0}))} \leq C({\mathcal S}_{0}) ( \| \omega \|_{L^{\infty}(0,T;L^{1}({\mathcal F}_{0}))} + \| \omega \|_{L^{\infty}(0,T;L^{\infty}({\mathcal F}_{0}))} + \| \ell \|_{L^{\infty}(0,T)} + \| r \|_{L^{\infty}(0,T)} + |\gamma|),
\end{equation}
and it also satisfies (see e.g. \cite{GT}) for $p \in (1,+\infty)$:
\begin{equation} \label{EstW1p}
\| \nabla v \|_{L^{\infty}(0,T;L^{p}({\mathcal F}_{0}))} \leq C({\mathcal S}_{0})  \,  \frac{p^2}{p-1} ( \| \omega \|_{L^{\infty}(0,T;L^{1}({\mathcal F}_{0}))} + \| \omega \|_{L^{\infty}(0,T;L^{\infty}({\mathcal F}_{0}))} + \| \ell \|_{L^{\infty}(0,T)} + \| r \|_{L^{\infty}(0,T)} + |\gamma|).
\end{equation}
As a consequence of \eqref{EstLL}, we have also a uniform log-Lipschitz estimate on $v - \ell - rx^{\perp}$, so one can define a unique flow associated to $v - \ell - rx^{\perp}$, that is $\Phi$ such that
\begin{equation*}
\partial_{t} \Phi(t,s,x) = (v - \ell - rx^{\perp})(t,\Phi(t,s,x)) \ \text{ and }\ \Phi(s,s,x)=x \ \text{ for } (t,s,x) \in [0,T]^{2} \times {\mathcal F}_{0}.
\end{equation*}
Then the $\omega$-part of ${\mathcal V}$ is defined as
\begin{equation*}
\tilde{\omega}(t,x):= w_{0}(\Phi(0,t,x)).
\end{equation*}
It satisfies
\begin{equation} \label{EqTildeOmega}
\partial_{t} \tilde{\omega} + \div ((v-\ell-rx^{\perp}) \tilde{\omega})=0.
\end{equation}
Next we introduce $\mu$ as
\begin{equation} \label{SysMu}
\nabla \mu := - (Id -P ) \Big ( (v-\ell-r x^\perp  )\cdot \nabla v + r v^{\perp} \Big).
\end{equation}
Let us justify that $\nabla \mu$ is well defined in $L^{\infty}(0,T;L^{p}({\mathcal F}_{0}))$, $p>2$. Due to Lemma \ref{LemProjLeray} one has only to check that $(v-\ell-r x^\perp  )\cdot \nabla v + r v^{\perp} \in L^{\infty}(0,T;L^{p}({\mathcal F}_{0}))$. For what concerns the part $(v-\ell)\cdot \nabla v$, this comes directly from \eqref{EstLL} and \eqref{EstW1p}. For what concerns $r (x^\perp \cdot \nabla) v$ and  $r v^{\perp} $ we use the fact that uniformly in $t$,
\begin{equation} \label{valinfini}
{v}={\mathcal O}\left(\frac{1}{|x|}\right) \text{ and } \nabla v = {\mathcal O}\left(\frac{1}{|x|^{2}}\right) \ \text{ as } x \rightarrow +\infty.
\end{equation}
To prove \eqref{valinfini}, we recall that $v$ is harmonic for $|x|$ large and tends to zero at infinity, and use the following estimate for its (convergent) Laurent series development: for $R>0$ large enough, one has
\begin{equation*}
\Big\| z \sum_{k\geq 1} \frac{a_{k}}{z^{k}} \Big\|_{L^{\infty}(\C \setminus B(0,R))} \leq 
\max \left( |a_{1}|, \Big\| z \sum_{k\geq 1} \frac{a_{k}}{z^{k}} \Big\|_{L^{\infty}(S(0,R))} \right)
\leq C \Big\| \sum_{k\geq 1} \frac{a_{k}}{z^{k}} \Big\|_{L^{\infty}(S(0,R))},
\end{equation*}
as follows by the maximum principle and Cauchy's Residue Theorem. The conclusion on $v$ follows then from the $L^{\infty}({\mathcal F}_{0})$  part of the  estimate \eqref{EstLL}. We proceed analogously on $\nabla v$, adding the classical interior elliptic regularity estimate: 
\begin{equation*}
\Big\| \sum_{k\geq 1} \frac{b_{k}}{z^{k}} \Big\|_{L^{\infty}(S(0,R))} \leq 
C \Big\| \sum_{k\geq 1} \frac{b_{k}}{z^{k}} \Big\|_{L^{p}(B(0,R+1) \setminus B(0,R-1))},
\end{equation*}
and using \eqref{EstW1p}. The estimate \eqref{valinfini} follows. \par
\ \par
Now we can define $(\tilde{\ell},\tilde{r})$ as follows:
\begin{equation} \label{TildeLR}
\begin{pmatrix} \tilde{\ell}^{1} \\ \tilde{\ell}^{2} \\ \tilde{r} \end{pmatrix}(t)
= \begin{pmatrix} \ell^{1}_{0} \\ \ell^{2}_{0} \\ r_{0} \end{pmatrix} + {\mathcal M}^{-1}\int_{0}^{t} 
\Bigg[ \begin{pmatrix} \displaystyle \int_{{\mathcal F}_{0}} \nabla \mu (\tau,x) \cdot \nabla \Phi_{i}(x) \, dx \end{pmatrix}_{i=1,2,3}
-m r \begin{pmatrix} -\ell^{2} \\ \ell^{1} \\ 0 \end{pmatrix} \Bigg] \, d \tau.
\end{equation}
\ \par
\noindent
{\bf Existence of a solution via a fixed point.} Let us now prove that for $T>0$ suitably small, ${\mathcal V}$ admits a fixed point in ${\mathcal C}$. We endow ${\mathcal C}$ with the $L^{\infty}(0,T;L^{p}({\mathcal F}_{0})) \times C^{0}([0,T];\R^{3})$ topology (for some $p \in (2,+\infty)$) and use Schauder's fixed point theorem. It is clear that ${\mathcal C}$ is a closed convex subset of $L^{\infty}(0,T;L^{p}({\mathcal F}_{0}))  \times C^{0}([0,T];\R^{3})$. Hence it remains to prove that ${\mathcal V}({\mathcal C}) \subset {\mathcal C}$, that ${\mathcal V}({\mathcal C})$ is relatively compact and that ${\mathcal V}$ is continuous. \par
\ \par
\noindent
$\bullet$ Let $(\omega,\ell,r) \in {\mathcal C}$. That $\tilde{\omega}$ satisfies point ${\it i.}$ in the definition of ${\mathcal C}$ is immediate. Due to \eqref{EstLL}, we see that
\begin{equation*}
\| v - \ell \|_{\infty} \leq C({\mathcal S}_{0}) ( \| w_{0} \|_{1} + \| w_{0} \|_{\infty} + | \ell_{0} | + | r_{0} | + |\gamma| +1),
\end{equation*}
so that $\mbox{Supp\,}(\tilde{\omega}(t)) \subset \overline{B}(0,\overline{\rho} + 1)$ is granted for $T$ small. 
For what concerns point ${\it iii.}$ in the definition of ${\mathcal C}$, we simply use  \eqref{EqTildeOmega}
and the estimates on $v$, $\ell$, $r$ and $\mbox{Supp\,}(\tilde{\omega})$ to see that it is satisfied by $\tilde{\omega}$ for $T$ suitably small. \par
Due to \eqref{EstLL}, \eqref{EstW1p}, \eqref{SysMu} and Lemma \ref{LemProjLeray}, we deduce that 
\begin{equation} \label{EstNablaMu}
\| \nabla \mu \|_{L^{p}({\mathcal F}_{0})} \leq C({\mathcal S}_{0}) ( \| w_{0} \|_{1} + \| w_{0} \|_{\infty} + | \ell_{0} | + | r_{0} | + |\gamma| +1)^{2}.
\end{equation}
Hence with \eqref{TildeLR} we obtain easily that for $T$ suitably small, $(\tilde{\ell},\tilde{r})$ satisfies the estimates in the definition of ${\mathcal C}$. Hence we have ${\mathcal V}({\mathcal C}) \subset {\mathcal C}$ for $T$ small. \par
\ \par
\noindent
$\bullet$ Let us now prove that ${\mathcal V}({\mathcal C})$ is relatively compact. Let us consider $(\tilde{\omega}_{n},\tilde{\ell}_{n},\tilde{r}_{n})$ a sequence in ${\mathcal V}({\mathcal C})$, let us say $(\tilde{\omega}_{n},\tilde{\ell}_{n},\tilde{r}_{n}) = {\mathcal V}(\omega_{n},\ell_{n},r_{n})$. 
Call $v_{n}$ the velocity field associated to $(\omega_{n},\ell_{n},r_{n})$ by \eqref{DivCurlSystem}, and $\Phi_{n}$ the corresponding flow. Using the definition of ${\mathcal C}$, \eqref{EstW1p} and Aubin-Lions' lemma, one deduces that $(v_{n})$ is relatively compact in $L^{\infty}_{loc}([0,T] \times \overline{{\mathcal F}_{0}})$. Let us say that $v_{\varphi(n)}$ converges uniformly to $v$. Due to the uniform log-Lipschitz estimates on $v$, we infer that
$\Phi_{\varphi(n)}$ converges uniformly towards the flow $\Phi$ associated to $v$ on $[0,T]^{2} \times \overline{B}(0,\overline{\rho}+1)$. This involves that
\begin{equation*}
\tilde{\omega}_{n} \longrightarrow \tilde{\omega}(t,x):=w_{0}(\Phi(0,t,x)) \ \text{ in } L^{\infty}(0,T;L^{p}({\mathcal F}_{0})).
\end{equation*}
(This convergence can for instance be established by using the density of $C^{\infty}_{0}({\mathcal F}_{0})$ in $L^{p}({\mathcal F}_{0})$.)
The compactness for $(\tilde{\ell},\tilde{r})$ is straightforward. \par
\ \par
\noindent
$\bullet$ We finally prove the continuity of ${\mathcal V}$. Suppose that $(\omega_{n},\ell_{n},r_{n})$ converges to $(\omega,\ell,r)$ in $L^{\infty}(0,T;L^{p}({\mathcal F}_{0})) \times C^{0}([0,T];\R^{3})$. Associate to them $v_{n}$, $\Phi_{n}$, etc. and $v$, $\Phi$, etc. Then as previously one deduces that $v_{n}$ converges to $v$ uniformly and that $\nabla v_{n}$ converges to $\nabla v$ in $L^{\infty}(0,T;L^{p}({\mathcal F}_{0}))$. We deduce consequently that $\Phi_{n}$ converges to $\Phi$ uniformly, so in the same way as above, $\tilde{\omega}_{n}$ converges to $\tilde{\omega}$ in $L^{\infty}(0,T;L^{p}({\mathcal F}_{0}))$. Also, using the fact that the estimates \eqref{valinfini} are uniform in $t$ and $n$, we see that $\nabla \mu_{n}$ converges to  $\nabla \mu$ in $L^{\infty}(0,T;L^{p}({\mathcal F}_{0}))$. We deduce that $(\ell_{n},r_{n})$ converges uniformly to $(\ell,r)$. \par
\ \par
\noindent
$\bullet$ Hence for $T$ small we obtain a fixed point. 
One deduces that this yields a solution to the original system \eqref{Euler1}-\eqref{Solideci}, going back to the original frame. Then the fact that a maximal solution is global comes from the a priori estimates on $\omega$, and the estimates of Section \ref{Sec:APE} for what concerns $\mbox{Supp\,}(\omega(t))$, $\ell$ and $r$. \par
\ \par
\noindent
{\bf Uniqueness.} This relies on Yudovich's idea \cite{Yudovich}. Suppose that we have two solutions $(\ell_{1},r_{1},v_{1})$ and $(\ell_{2},r_{2},v_{2})$ with the same initial data. (In this part of the proof the indices do not stand for the components.)  In particular, they share the same circulation $\gamma$ and initial vorticity $w_{0}$. As a consequence, despite the fact that $v_{1}$ and $v_{2}$ are not necessarily in $L^{2}({\mathcal F}_{0})$, their difference $v_{1} - v_{2}$ does belong to $L^{\infty}(0,T;L^{2}({\mathcal F}_{0}))$ with
\begin{equation} \label{EstDiffv}
v_{1} - v_{2} = {\mathcal O}\left( \frac{1}{|x|^{2}}\right) \text{ and } \nabla( v_{1} - v_{2}) = {\mathcal O}\left( \frac{1}{|x|^{3}}\right) \ \text{ as } |x| \rightarrow +\infty.
\end{equation}
(Recall that both $v_{1}$ and $v_{2}$ are harmonic for $|x|$ large enough and converge to $0$ at infinity). \par
Let us also remark that due to \eqref{EstNablaMu} and \eqref{CalcPression2}, $\ell_{1}$,  $\ell_{2}$, $r_{1}$, $r_{2}$ belong to $W^{1,\infty}(0,T)$. As a consequence, using \eqref{CalcPression}, we see that $\nabla q_{1}$ and $\nabla q_{2}$ belong to $L^{\infty}(0,T;L^{2}({\mathcal F}_{0}))$. \par
\ \par
Then defining $\breve{\ell}:=\ell_{1}-\ell_{2}$, $\breve{r}:=r_{1}-r_{2}$, $\breve{v}:= v_{1} -v_{2}$ and $\breve{q}=q_{1}-q_{2}$,  we deduce from \eqref{Euler11} that
\begin{equation*}
\frac{\partial \breve{v}}{\partial t}
+ \left[(v_{1} -\ell_{1} -r_{1} x^\perp)\cdot\nabla\right]\breve{v} 
+ \left[(\breve{v} -\breve{\ell} -\breve{r} x^\perp)\cdot\nabla\right] v_{2} 
+ r_{1} \breve{v}^\perp + \breve{r} v_{2}^{\perp}+\nabla \breve{q} =0 .
\end{equation*}
\par
We multiply by $\breve{v}$, integrate over ${\mathcal F}_{0}$ and integrate by parts (which is permitted by \eqref{EstDiffv} and by the regularity of the pressure), and deduce:
\begin{equation*}
\frac{1}{2} \frac{d}{dt} \| \breve{v} \|_{L^{2}}^{2}
+ \int_{{\mathcal F}_{0}}  \breve{v} \cdot \left[ (\breve{v} -\breve{\ell} -\breve{r} x^\perp)\cdot \nabla v_{2} \right]  \, dx
+\breve{r} \int_{{\mathcal F}_{0}}  \breve{v}   \cdot v_{2}^{\perp} \, dx
+ \int_{\partial {\mathcal F}_{0}} \breve{q} \breve{v}\cdot n=0 .
\end{equation*}
For what concerns the last term, 
\begin{eqnarray*}
\int_{\partial {\mathcal F}_{0}} \breve{q} \breve{v}\cdot n &=& \breve{\ell} \cdot \int_{\partial {\mathcal F}_{0}} \breve{q} n + \breve{r}  \int_{\partial {\mathcal F}_{0}} \breve{q} x^{\perp} \cdot n \\
&=& m\breve{\ell} \cdot \big( \breve{\ell}'+ \breve{r}\ell_{1}^{\perp} + r_{2} \breve{\ell}^{\perp} \big) + {\mathcal J} \breve{r}\breve{r}' \\
&=& m \breve{r}\, \breve{\ell} \cdot \ell_{1}^{\perp} + m\breve{\ell} \cdot \breve{\ell}'+ {\mathcal J} \breve{r}\breve{r}'.
\end{eqnarray*}
Using \eqref{vect}, we deduce
\begin{equation*}
(x^\perp \cdot \nabla) v_{2} = \nabla (x^{\perp} \cdot v_{2}) - v_{2}^{\perp} - x^{\perp} \omega_{2},
\end{equation*}
so that after integration by parts
\begin{equation*}
\int_{{\mathcal F}_{0}} \breve{v} \cdot [(x^\perp \cdot \nabla) v_{2}] \, dx 
= \int_{{\mathcal S}_{0}} (x^{\perp} \cdot v_{2}) [(\breve{\ell} + \breve{r} x^{\perp}) \cdot n ]\, ds+
\int_{{\mathcal F}_{0}} \breve{v} \cdot (- v_{2}^{\perp} - x^{\perp} \omega_{2}) \, dx  .
\end{equation*}
Hence using the boundedness of $v_{2}$ and $\omega_{2}$ in $L^{\infty}(0,T;L^{\infty}({\mathcal F}_{0}))$, the boundedness of $\ell^{1}$ and the one of $\mbox{Supp\,}(\omega_{2})$, we arrive to
\begin{equation*}
\frac{d}{dt} \big( \| \breve{v} \|_{L^{2}}^{2} + \| \breve{\ell} \|^{2} + \| \breve{r} \|^{2} \big) 
\leq C  \Big( \| \breve{v} \|_{L^{2}}^{2} + \| \breve{\ell} \|^{2} + \| \breve{r} \|^{2} + \| \nabla v_{2} \|_{L^{p}} \| \breve{v}^{2} \|_{L^{p'}} \Big),
\end{equation*}
for $p>2$.
(Here, the various constants $C$ may depend on ${\mathcal S}_{0}$ and on the solutions $(\ell_{1},r_{1},v_{1})$ and $(\ell_{2},r_{2},v_{2})$, but not on $p$.) Hence using \eqref{EstW1p}, we obtain that for $p$ large,
\begin{eqnarray*}
\frac{d}{dt} \big( \| \breve{v} \|_{L^{2}}^{2} + \| \breve{\ell} \|^{2} + \| \breve{r} \|^{2} \big) 
&\leq& C  \Big( \| \breve{v} \|_{L^{2}}^{2} + \| \breve{\ell} \|^{2} + \| \breve{r} \|^{2} \Big) + \tilde{C} p \| \breve{v}^{2} \|_{L^{p'}} \\
&\leq& C  \Big( \| \breve{v} \|_{L^{2}}^{2} + \| \breve{\ell} \|^{2} + \| \breve{r} \|^{2} \Big) 
+ \tilde{C} p \| \breve{v} \|_{L^{2}}^{\frac{2}{p'}}  \| \breve{v}^{2} \|_{L^{\infty}}^{\frac{1}{p}}  .
\end{eqnarray*}
For some constant $K>0$, we have on $[0,T]$:
\begin{equation*}
\| \breve{v} \|_{L^{2}}^{2} + \| \breve{\ell} \|^{2} + \| \breve{r} \|^{2} \leq K,
\end{equation*}
so for some $C>0$ one has in particular
\begin{equation*}
\frac{d}{dt} \big( \| \breve{v} \|_{L^{2}}^{2} + \| \breve{\ell} \|^{2} + \| \breve{r} \|^{2} \big) 
\leq C p \Big( \| \breve{v} \|_{L^{2}}^{2} + \| \breve{\ell} \|^{2} + \| \breve{r} \|^{2} \Big)^{1/p'}.
\end{equation*}
Now the unique solution of $y' = N y^{\delta}$ and $y(0)= \varepsilon>0$ for $\delta \in (0,1)$ and $N >0$ is given by
\begin{equation*}
y(t) = \Big[ (1-\delta) Nt + \varepsilon^{1-\delta}\Big]^{\frac{1}{1-\delta}}.
\end{equation*}
Hence a comparison argument proves that 
\begin{equation*}
\| \breve{v} \|_{L^{2}}^{2} + \| \breve{\ell} \|^{2} + \| \breve{r} \|^{2} \leq (Ct)^{p}.
\end{equation*}
We conclude that $\breve{v}=0$ for $t< 1/C$ by letting $p \rightarrow +\infty$. \par
\ \par
\subsection{Proof of Theorem \ref{ThmYudo} (Existence)}
We now briefly explain how to deduce Theorem \ref{ThmYudo} from Theorem \ref{ThmYudo0}, by a quite straightforward adaptation of the methods of  \cite{lions}. \par
Consider an initial data for the vorticity $w_{0} \in L_c^{p}(\overline{{\mathcal F}_0})$ with $p \in (2,+\infty)$.
Let us introduce a sequence $(w_{0}^{n})_{n \in \N} \in (L_c^{\infty}(\overline{{\mathcal F}_0}))^{\N}$ converging to $w_{0}$ in $L^{p}({\mathcal F}_{0})$. Let us consider $(\ell_{n},r_{n},v_{n})$ the corresponding solutions given by Theorem \ref{ThmYudo0} in the body frame. \par
We use the conservation of the energy given in Proposition \ref{PropKirchoffSueur}, of the $L^p$ norm of the vorticity and of the circulation around the body an proceed as in Section \ref{Sec:APE} to deduce that for any $T>0$:
\begin{equation} \label{APEDiPernaMajda}
\| \rho_{\omega_{n}} \|_{L^{\infty}(0,T)} +  \| \ell_{n} \|_{L^{\infty}(0,T)} + \| r_{n} \|_{L^{\infty}(0,T)} + \| v_{n} \|_{L^{\infty}(0,T;W^{1,p}({\mathcal F}_{0}))} \leq C,
\end{equation}
where we used the notation \eqref{DefRhof}. (As a matter of fact, the proof could be simpler, since we have a classical flow associated to $v_{n}$.) \par
Using the definition \eqref{DefNablaMu} of $\mu$, Lemma \ref{LemProjLeray}, \eqref{valinfini} and \eqref{APEDiPernaMajda}, we deduce that
\begin{equation*}
\| \nabla \mu_{n} \|_{L^{\infty}(0,T;L^{p}({\mathcal F}_{0}))} \leq C.
\end{equation*}
Going back to \eqref{CalcPression2} and then to \eqref{CalcPression}, we deduce that
\begin{equation*}
\| \ell_{n} \|_{W^{1,\infty}(0,T)} + \| r_n \|_{W^{1,\infty}(0,T)}  + \| \nabla q_{n} \|_{L^{\infty}(0,T;L^{p}({\mathcal F}_{0}))}\leq C.
\end{equation*}
Now, using (\ref{Euler11}) and \eqref{valinfini}, we deduce that
\begin{equation*}
\| \partial_{t} v_{n} \|_{L^{\infty}(0,T;L^{p}({\mathcal F}_{0}))}\leq C.
\end{equation*}
Moreover, \eqref{Euler11} can be written as follows:
\begin{eqnarray*}
-  \partial_t \hat{v}_{n}=  \nabla q_{n} +\left(v_{n}-\ell_{n} \right) \cdot\nabla v_{n} - r_{n} x^\perp \cdot\nabla \hat{v}_{n}
+r_{n} \hat{v}_{n}^\perp + (\alpha+\gamma) r_{n} [H^\perp - (x^\perp \cdot\nabla) H] ,
  \end{eqnarray*}
where  $\hat{v}_{n}:= v_{n} - (\gamma + \alpha) H$, with $\alpha$ given by \eqref{DefAlpha}.
Using \eqref{HSeriesLaurent2}, we infer that 
\begin{equation} \label{HalInfini2}
\partial_{t} v_{n}= \partial_t \hat{v}_{n}  = {\mathcal O}\left(\frac{1}{|x|^2}\right) ,
\end{equation}
uniformly in $n$ and in $t$, so that $\partial_{t} v_{n}$ is bounded in $L^{\infty}(0,T;L^{q}({\mathcal F}_{0}))$ for any $q \in (1,p ]$. \par
Hence by a straightforward compactness argument, we extract a sequence, that we still index by $n$ such that
\begin{equation*}
\ell_{n} \cvwstar \ell, \ r_{n} \cvwstar r \ \text{ in } \ W^{1,\infty}_{loc}(\R^{+}),\ 
\nabla q_{n} \cvwstar \nabla q \ \text{ in } \ L^{\infty}_{loc}(\R^{+};L^{p}({\mathcal F}_{0})),
\ \text{ and }
v_{n} \cvwstar v \ \text{ in } \ L^{\infty}_{loc}(\R^{+};W^{1,p}({\mathcal F}_{0})),
\end{equation*}
and, using Aubin-Lions' lemma, such that
\begin{equation*}
v_{n} \longrightarrow v \text{ in } L^{\infty}_{loc}(\R^{+} \times {\mathcal F}_{0}).
\end{equation*}
This is enough to pass to the limit into the equation, in the sense of distributions. \par
Going back to the original variables, we get that
\begin{equation*}
u \in L^{\infty}(0,T;W^{1,p}({\mathcal F}(t))), \ 
\nabla p \in L^{\infty}(0,T;L^{p}({\mathcal F}(t))),
\end{equation*}
satisfy \eqref{Euler1}. Using $(u\cdot \nabla) u = \div (u \otimes u)$, we deduce that $\partial_{t} u \in L^{\infty}(0,T;L^{p}({\mathcal F}(t)))$, and, considering its behaviour at infinity,  $\partial_{t} u \in L^{\infty}(0,T;L^{q}({\mathcal F}(t)))$ for all $q \in (1,p]$. We deduce finally that $\nabla p \in L^{\infty}(0,T;L^{q}({\mathcal F}(t)))$ for all $q \in (1,p]$. \par
\subsection{Proof of Lemma \ref{support} and of Theorem \ref{ThmYudo} (Additional properties)}
\label{PreuveYudoAdd}
We begin by proving Lemma \ref{support}. This will in particular establish the assertion in Theorem \ref{ThmYudo} concerning the compactness of the support of $w(t,\cdot)$. \par
\begin{proof}[Proof of Lemma \ref{support}]
In the case $p=+\infty$, we have a unique flow $\Phi(t,s,x)$ associated to $v^{\eps}-\ell^{\eps}-r^{\eps} x^\perp$, since this vector field is log-Lipschitz uniformly in time, and the conclusion follows easily, because
\begin{equation*}
\omega(t,x) =w_{0}(\Phi(0,t,x)).
\end{equation*}
Let us discuss the main case, that is $p< +\infty$.
We will use the renormalization theory for which we refer to  \cite{DiPernaLions,lions,bouchut}. In particular it follows from \cite[Theorem 3.2]{bouchut}, that  $\omega^{\eps} $ is a renormalized solution of  \eqref{vorty1}, in the sense that for any $\beta \in \text{Lip} (\R;\R)$, 
\begin{equation*}
\partial_t  \beta (\omega^{\eps} ) + \div 
\big(  \beta (\omega^{\eps} ) (v^{\eps}-\ell^{\eps}-r^{\eps} x^\perp) \big) =0,
\end{equation*}
in $(0,T) \times \mathcal{F}^{\eps}_{0} $ in the sense of  distributions. We deduce that for any $\psi \in C^\infty_{c} ( [0,T] \times {\mathcal{F}^{\eps}_{0}} )$ ,
\begin{equation}
\label{nope}
\int_{\mathcal{F}^{\eps}_{0}} \beta( \omega^{\eps}  (t,\cdot)) \psi     (t,\cdot) 
=
\int_{\mathcal{F}^{\eps}_{0}} \beta (w_{0}) \psi     (0,\cdot) 
+ \int_{ [0,t] \times  \mathcal{F}^{\eps}_{0}} (\partial_t  \psi   +  (v^{\eps}-\ell^{\eps}-r^{\eps} x^\perp) \cdot \nabla_{x }  \psi  ) \beta( \omega^\eps)  .
\end{equation}
%
%
%
Now let us prove that \eqref{nope} holds also for any $\psi \in C^\infty_{c} ( [0,T] \times \overline{\mathcal{F}^{\eps}_{0}} )$ and any bounded $\beta \in \text{Lip} (\R;\R)$. Let $\eta \in C^{\infty}(\R^{+};\R)$ such that $\eta =1$ in $[0,1]$ and $\eta=0$ in $[2,+\infty)$.
Given $\phi \in C^\infty_{c} ( [0,T] \times \overline{\mathcal{F}^{\eps}_{0}} )$ and a bounded $\beta \in \text{Lip} (\R;\R)$, we apply \eqref{nope} to $\psi(t,x):=\phi(t,x) \eta(d(x,\partial {\mathcal S}^{\varepsilon}_{0})/{\delta})$ for $\delta>0$ small. Using $\nabla d(x,\partial {\mathcal S}^{\varepsilon}_{0}) \cdot [v^{\varepsilon}-\ell^{\varepsilon} -r^{\varepsilon} x^{\perp}]=0$ on $\partial {\mathcal S}^{\varepsilon}_{0}$, the uniform continuity of $\nabla d(x,{\mathcal S}_{0}^{\varepsilon}) \cdot [v^{\varepsilon}-\ell^{\varepsilon} -r^{\varepsilon} x^{\perp}]$ on some compact neighborhood of $[0,T] \times {\mathcal S}_{0}^{\varepsilon}$ and letting $\delta \rightarrow 0^{+}$, we deduce the claim. \par
Using Lebesgue's dominated convergence theorem, we deduce that \eqref{nope} is still valid when $\beta(t):=|t|^{p}$ and when $\psi$ is replaced by some $\phi \in C^\infty ( [0,t] \times \overline{\mathcal{F}^{\eps}_{0}} )$, bounded as well as its first derivatives. As a consequence we obtain that  for all $t$, 
\begin{equation}
\label{nope15}
\int_{\mathcal{F}^{\eps}_{0}} | \omega^{\eps}  (t,\cdot))|^{p} \phi     (t,\cdot) 
=
\int_{\mathcal{F}^{\eps}_{0}} |w_{0}|^{p} \phi     (0,\cdot) 
+ \int_{ [0,t] \times  \mathcal{F}^{\eps}_{0}} (\partial_t  \phi   +  (v^{\eps}-\ell^{\eps}-r^{\eps} x^\perp) \cdot \nabla_{x }  \phi  ) |\omega^\eps|^{p}  .
\end{equation}

Now, as in \cite{CricriLylyne}, let $\phi_{0}$ denote a smooth nondecreasing function from $\R$ to $\R$ such that  $\phi_{0} (s) =0 $ for $s \leq 1$ and   $\phi_{0} (s) = 1 $ for $s \geq 2$. We also define $\phi (s,x) := \phi_{0} ( | x | / R(s) ) $, where
\begin{equation*}
R(t) := \rho^{\varepsilon}(0) + \int_{0}^{t} \| v^{\varepsilon} - \ell^{\varepsilon} \|_{L^{\infty}(\R^{2} \setminus B(0,1))}.
\end{equation*}
With this test function \eqref{nope15} now reads
\begin{equation}
\label{nope2}
\int_{\mathcal{F}^{\eps}_{0}} | \omega^{\eps}  (t,\cdot)    |^{p}  \phi_{0} \left(\frac{|x|}{R}\right)
=
\int_{[0,t] \times \mathcal{F}^{\eps}_{0}} \left( ( v^{\eps}-\ell^{\eps}-r^{\eps} x^\perp)\cdot \frac{x}{|x|} - \frac{R'}{R} |x| \right)  \frac{\phi'_{0} \left( \frac{|x|}{R} \right) }{R}  | \omega^{\eps} |^{p}  .
\end{equation}
Since $\phi'_{0}$ has non trivial values only when $\frac{|x|}{R}  \geq 1$, we get that 
\begin{equation}
\label{nope3}
\int_{\mathcal{F}^{\eps}_{0}} | \omega^{\eps}  (t,\cdot)    |^{p}  \phi_{0} \left(\frac{|x|}{R}\right)
\leq
\int_{ [0,t] \times  \mathcal{F}^{\eps}_{0}} \left(  \| v^{\varepsilon} - \ell^{\varepsilon} \|_{L^{\infty}(\R^{2} \setminus B(0,1))} - R' \right) \frac{\phi'_{0} \left( \frac{|x|}{R} \right)}{R}  | \omega^{\eps}   |^{p} = 0 ,
\end{equation}
which proves the lemma.
\end{proof}
Let us finally establish the last properties of the solutions announced in Theorem \ref{ThmYudo}. \par
\ \par
\noindent
\begin{proof}[End of the proof of Theorem \ref{ThmYudo}]
That the quantities mentioned in the statement are preserved over time can be seen as follows.
\begin{itemize}
\item Concerning the conservation of energy, this was proven in Proposition \ref{PropKirchoffSueur}. 
\item Concerning the conservation of $\| w(t) \|_{q}$ for $q \in [1,p]$, we use again \eqref{nope}. By using Lebesgue's dominated convergence theorem, it is easy to see that one can apply it to $\psi=1$ and $\beta (t)=|t|^{q}$, which yields the result.
\item The conservation of $\int_{{\mathcal F}_{0}} w(t,x) \, dx$ can be seen likewise (this is just the trivial case $\beta(t)=t$.)
\item Finally, the conservation of $\int_{\partial {\mathcal S}_{0}} u \cdot \tau \, ds$ can be seen as follows. For $R>0$ large enough, due to Lemma \ref{support}, one can see that $v(t,\cdot)$ is harmonic (and therefore smooth) outside $B(0,R)$. It follows then from the usual Kelvin's circulation theorem that the circulation on large circles is conserved along the flow. Combined with Green's formula and the conservation of $\int_{{\mathcal F}_{0}} w(t,x) \, dx$, this proves the claim.
\end{itemize}
Finally, we can deduce the continuity in time of $v$ with values in $W^{1,p}({\mathcal F}_{0})$ (for $p < +\infty$). Indeed, we already have $v \in L^{\infty}_{loc}(\R^{+};W^{1,p}({\mathcal F}_{0}))$ and  $v \in C^{0}(\R^{+};L^{p}({\mathcal F}_{0}))$; consequently one has $v \in C^{0}(\R^{+};W^{1,p}({\mathcal F}_{0})-w)$. It follows that
\begin{equation*}
w(s,\cdot) \cv w(t,\cdot) \text{ in } L^{p}({\mathcal F}_{0})-w  \text{ as } s \rightarrow t.
\end{equation*}
Due to the conservation of $\| w(t) \|_{p}$, this convergence is strong, and the claim follows. 
\end{proof}
\section{Hamiltonian structure of the limit system}
\label{Sec:Hamilton}
First we endow the manifold $ \mathcal{P}$ of the triplets $(w,h,\xi):=(w, h, mh')$ with a Poisson structure (see \cite{ArnoldKhesin}). That is to say, we endow ${\mathcal P}$ with a bracket $\{ \cdot  , \cdot \}$ acting on $C^{\infty}$ functionals $F:{\mathcal P} \rightarrow \R$, bilinear and skew-symmetric,  satisfying the Jacobi and the Leibniz identities. Here this is obtained by setting, for any smooth functionals $F^1 , F^2$ on $ \mathcal{P}$, 
\begin{equation*}
\{F^1 , F^2 \} := \gamma   F^1_\xi  \cdot    (F^2_\xi)^\perp   -  ( F^1_\xi \cdot F^2_h -   F^1_h \cdot F^2_\xi )  - \int_{\R^2 }  w \ \nabla_{x} F^1_w \cdot  \nabla_{x}^\perp F^2_w ,
\end{equation*}
where  $F_w$, $F_h$ and $ F_\xi$ denote the derivatives with respect to $ w, h$ and $\xi$ of a functional $F$. The properties cited above are clear from this definition. \par
Moreover, $\mathcal{H}$ (given by \eqref{HamiltonienLimite}) can also be seen as a functional on the manifold $\mathcal{P}$, and this endows the system with a Hamiltonian structure in the following sense. The following computations are valid either when both $F$ and the solution are smooth, or in the framework considered in this paper when $F={\mathcal H}$.
\begin{Proposition}\label{ProPoisson}
When $( w, h,\xi)$ solves  the equations \eqref{EulerPoint}--\eqref{EPU} we have for any smooth functional $F$, the ordinary differential equation
\begin{equation}
\label{haha}
\frac{d}{dt} F  = \{F , \mathcal{H} \},
\end{equation}
where $F$ and $ \mathcal{H}$ in \eqref{haha} stand respectively for $ F ( w, h,\xi) $ and $   \mathcal{H}( w, h,\xi) $.
\end{Proposition}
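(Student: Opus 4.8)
The plan is to compute the three variational derivatives of $\mathcal{H}$ in \eqref{HamiltonienLimite} with respect to $w$, $h$ and $\xi$, to substitute them into the bracket $\{F,\mathcal{H}\}$, and then to recognize the resulting expression — after invoking the equations of motion — as the chain-rule expansion of $\frac{d}{dt}F$.

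First I would rewrite $\mathcal{H}$ in the variables $(w,h,\xi)$ using $\xi = mh'$, so that $m|h'|^{2} = |\xi|^{2}/m$. Introducing the stream function $\psi(x):=\int_{\R^{2}} G(x-y)\,w(y)\,dy$ of $\tilde u$ (so that $\tilde u = \nabla^{\perp}\psi$ and $H(\cdot - h) = \nabla^{\perp}G(\cdot - h)$), a direct differentiation of \eqref{HamiltonienLimite} gives
\[ \mathcal{H}_{\xi} = \frac{\xi}{m} = h', \qquad \mathcal{H}_{w}(x) = -\psi(x) - \gamma\, G(x-h), \qquad \mathcal{H}_{h} = \gamma\int_{\R^{2}}\nabla G(x-h)\,w(x)\,dx. \]
Two observations are crucial. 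On one hand $\nabla^{\perp}_{x}\mathcal{H}_{w} = -\big(\tilde u + \gamma H(\cdot - h)\big)$, i.e. $\nabla^{\perp}\mathcal{H}_{w}$ is exactly the opposite of the full transport velocity appearing in \eqref{EulerPoint}. On the other hand, since $\nabla G$ is odd and $(\nabla G)^{\perp} = \nabla^{\perp}G$, one has $\int_{\R^{2}}\nabla G(x-h)\,w(x)\,dx = (\tilde u(t,h))^{\perp}$, whence $\mathcal{H}_{h} = \gamma\,(\tilde u(t,h))^{\perp}$.

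Next I would substitute these into the bracket. Writing $K := \tilde u + \gamma H(\cdot - h)$ for the transport velocity, the first term $\gamma F_{\xi}\cdot(\mathcal{H}_{\xi})^{\perp}$ and the middle term $-(F_{\xi}\cdot\mathcal{H}_{h}-F_{h}\cdot\mathcal{H}_{\xi})$ combine, by linearity of $\perp$, into $\gamma F_{\xi}\cdot (h'-\tilde u(t,h))^{\perp} + F_{h}\cdot h'$, while the last term becomes $-\int_{\R^{2}} w\,\nabla F_{w}\cdot\nabla^{\perp}\mathcal{H}_{w} = \int_{\R^{2}} w\,\nabla F_{w}\cdot K$. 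At this point I invoke \eqref{PointEuler}, which reads $\gamma(h'-\tilde u(t,h))^{\perp} = mh'' = \xi'$, so the first term is exactly $F_{\xi}\cdot\xi'$. For the integral term I use \eqref{EulerPoint} in the form $\partial_{t}w = -\div(Kw)$ and integrate by parts (boundary contributions vanishing since $w(t,\cdot)$ is compactly supported), which gives $\int_{\R^{2}} w\,\nabla F_{w}\cdot K = -\int_{\R^{2}} F_{w}\,\div(Kw) = \int_{\R^{2}} F_{w}\,\partial_{t}w$. Collecting the three pieces yields $\{F,\mathcal{H}\} = F_{\xi}\cdot\xi' + F_{h}\cdot h' + \int_{\R^{2}} F_{w}\,\partial_{t}w$, which is precisely $\frac{d}{dt}F(w,h,\xi)$ by the chain rule, establishing \eqref{haha}.

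The main obstacle is not the algebra but the justification of these manipulations at the limited regularity available, above all in the case $F=\mathcal{H}$ of real interest (for which \eqref{haha} recovers the conservation of energy via antisymmetry of the bracket). Here both $\nabla\mathcal{H}_{w}$ and $K$ carry a $1/|x-h|$ singularity at the point vortex, so the integrand $w\,\nabla\mathcal{H}_{w}\cdot K$ must be shown integrable and the integration by parts justified. The saving structure is that the most singular pieces are mutually orthogonal: the a priori worst term $\nabla G(x-h)\cdot\nabla^{\perp}G(x-h)$ — which would be of order $1/|x-h|^{2}$, hence not locally integrable in the plane — vanishes identically because $(x-h)\cdot(x-h)^{\perp}=0$, and the surviving contributions are only of order $1/|x-h|$, which is locally integrable in dimension two. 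I would make this rigorous by excising a disk $B(h,\delta)$, carrying out the integration by parts on its complement, and letting $\delta\to 0^{+}$ while verifying that the boundary integral over $\partial B(h,\delta)$ tends to zero. Finally, for the limit solution one should work from the weak formulation \eqref{EqSolFaibleIntro} of \eqref{EulerPoint} rather than the pointwise identities, using suitable approximations of $\nabla F_{w}$ as admissible test vector fields to close the argument.
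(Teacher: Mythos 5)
Your proposal is correct and follows essentially the same route as the paper: you compute the same variational derivatives $\mathcal{H}_w$, $\mathcal{H}_h=\gamma\,\tilde u(t,h)^\perp$, $\mathcal{H}_\xi=\xi/m$, identify $\nabla^\perp\mathcal{H}_w$ with minus the transport velocity, and use \eqref{PointEuler}, \eqref{EulerPoint} and one integration by parts — merely running the computation from the bracket toward $\tfrac{d}{dt}F$ rather than the other way around. Your additional discussion of the singular integrand (in particular the pointwise cancellation $\nabla G\cdot\nabla^\perp G=0$ and the excision argument) is a welcome supplement to the paper's brief remark that the computations are valid for smooth data or for $F=\mathcal{H}$.
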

\begin{proof}[Proof of Proposition \ref{ProPoisson}]
According to the chain rule, we have 
\begin{equation*}
\frac{d  }{dt} F ( w, h,\xi)=  \int_{\R^2  } F_w \frac{\partial w }{\partial t} + F_h \cdot h' + F_\xi \cdot  m h''.
\end{equation*}
Using the equations \eqref{EulerPoint}--\eqref{EPU} we  arrive to 
\begin{equation*}
\frac{d  }{dt}  F ( w, h,\xi) =  - \int_{\R^2  }  F_w \Big( \tilde{u}+ \frac{\gamma}{2\pi} \frac{(x-h(t))^{\perp}}{|x-h(t)|^{2}} \Big) \cdot\nabla_x  w  +  F_h \cdot h'+ \gamma F_\xi \cdot \Big(h'(t) -  \tilde{u}(t,h(t))\Big)^\perp  =: I_1 + I_2 +I_3 .
\end{equation*}
On the other side the derivatives of $ \mathcal{H}$ are 
\begin{equation*}
\mathcal{H}_w = -  \int_{\R^2  }   G( \cdot - y) \, w(y) \, dy - \gamma G(  \cdot - h(t)) , \quad \mathcal{H}_h =  \gamma \tilde{u}(t,h(t))^\perp , \quad \mathcal{H}_\xi =  \frac{\xi}{m} .
\end{equation*}
Then, integrating by parts, we find 
\begin{equation*}
I_1 =   \int_{\R^2} w \nabla^\perp \left(  \int_{\R^2  }   G( \cdot - y ) \, w(y) \, dy + \gamma G( \cdot - h(t) ) \right) \cdot \nabla F_w 
= -  \int_{\R^2  } w  \nabla F_w  \cdot \nabla^\perp  \mathcal{H}_w .
\end{equation*}
On the other side, we have
\begin{gather*}
I_2 = h' \cdot F_h =  \mathcal{H}_\xi \cdot F_h, \\
I_3 =  \frac{ \gamma}{m} \xi^\perp  \cdot  F_\xi - \gamma   \tilde{u}(t,h(t))^\perp \cdot F_\xi 
= \gamma    \mathcal{H}_\xi^\perp  \cdot  F_\xi -  \mathcal{H}_h  \cdot  F_\xi  ,
\end{gather*}
which yields  \eqref{haha}.
\end{proof}
As a simple corollary of \eqref{haha} and of the skew-symmetry of the bracket we get that $\mathcal{H}$ is conserved by the solutions of  \eqref{EulerPoint}--\eqref{EP0}. \par
\ \par
\noindent
{\bf Acknowledgements.} The first and third authors were partially supported by the Agence Nationale de la Recherche, Project CISIFS,  grant ANR-09-BLAN-0213-02. The second author is partially supported by the Agence Nationale de la Recherche, Project MathOc\'ean, grant ANR-08-BLAN-0301-01.

\end{document}